\newcommand{\supp}{\operatorname{supp}}
\newcommand{\bea}{\begin{eqnarray}}
\newcommand{\eea}{\end{eqnarray}}
\newcommand{\bna}{\begin{eqnarray*}}
\newcommand{\ena}{\end{eqnarray*}}
\numberwithin{equation}{section}
\theoremstyle{plain}
\newtheorem{lemma}{Lemma}[section]
\newtheorem{theorem}[lemma]{Theorem}
\newtheorem{corollary}[lemma]{Corollary}
\theoremstyle{definition}
\newtheorem{remark}{Remark}
\begin{document}
	
\title{Analytic twists of $\rm GL_2\times\rm GL_2$ automorphic forms}
		
\author{Bingrong Huang}
\address{Data Science Institute and School of Mathematics, Shandong University \\ Jinan,
Shandong 250100, China}
\email{brhuang@sdu.edu.cn}

\author{Qingfeng Sun}
\address{School of Mathematics and Statistics, Shandong University,
Weihai\\Weihai, Shandong 264209, China}
\email{qfsun@sdu.edu.cn}

\author{Huimin Zhang}
\address{Data Science Institute and School of Mathematics, Shandong University \\
Jinan, Shandong 250100, China}
\email{hmzhang@mail.sdu.edu.cn}

\date{}

\begin{abstract}
Let $f$ and $g$ be holomorphic or Maass cusp forms for $\rm SL_2(\mathbb{Z})$
with normalized Fourier coefficients $\lambda_f(n)$ and $\lambda_g(n)$, respectively. In this paper,
we prove nontrivial estimates for the sum
\bna
\sum_{n=1}^{\infty}\lambda_f(n)
\lambda_g(n)e\left(t \varphi\left(\frac{n}{X}\right)\right)V\left(\frac{n}{X}\right),
\ena
where $e(x)=e^{2\pi ix}$,
$V(x)\in \mathcal{C}_c^{\infty}(1,2)$, $t\geq 1$ is a large parameter and
$\varphi(x)$ is some nonlinear real valued smooth function.
Applications of these estimates include a subconvex
bound for the Rankin-Selberg $L$-function $L(s,f\otimes g)$ in the $t$-aspect,
an improved estimate for a nonlinear exponential twisted sum
and the following asymptotic formula for
the sum of the Fourier coefficients of certain $\rm{GL}_5$ Eisenstein series
$$
  \sum_{n \leq X}\lambda_{1\boxplus(f\times g)}(n)
  =L(1,f\times g)X + O(X^{\frac{2}{3}-\frac{1}{356}+\varepsilon})
$$
for any $\varepsilon>0$.

\end{abstract}
\thanks{(B. Huang is partially supported by the Young Taishan Scholars
Program of Shandong Province (Grant No. tsqn201909046),
Qilu Young Scholar Program of Shandong University,
and NSFC (Grant Nos. 12001314 and 12031008). Q. Sun is partially
  supported by the National Natural Science Foundation
  of China (Grant Nos. 11871306 and 12031008)}
	
	\keywords{Fourier coefficients, nonlinear exponential sums, $\rm GL_2\times \rm GL_2$ automorphic forms,
subconvexity}
	
	\subjclass[2010]{11F30, 11L07, 11F66, 11M41}
	\maketitle
	
\section{Introduction}\label{introduction}

When studying number theory problems, one often runs into nonlinear exponential sums of the form
\bna
\sum_{n=1}^{\infty}a_n
e\left(t \varphi\left(\frac{n}{X}\right)\right)V\left(\frac{n}{X}\right),
\ena
where $a_n$ is some arithmetic function, here and throughout the paper, $e(x)=e^{2\pi ix}$,
$V(x)\in \mathcal{C}_c^{\infty}(1,2)$ is a smooth function with
support contained in $(1,2)$, $t,X\geq 1$ are large parameters and
$\varphi(x)$ is some nonlinear real valued smooth function. For example,
for an automorphic $L$-function $L(s,F)$, the subconvexity problem of $L(s,F)$ in the $t$-aspect
boils down to a nontrivial estimate for this sum with
$a_n=\lambda_F(n)$ being the Fourier coefficients of the automorphic form $F$ and
$\varphi(x)=-(\log x)/2\pi$. Here we remind that for $a_n$ $(n\sim X)$ satisfying
$\|a_n\|^2=\sum_n|a_n|^2\ll X$, the trivial bound
of this nonlinear exponential sum is $O(X)$. On the other hand, it is worth noting that the
square-root cancellation phenomenon should not
hold in general, as first found by Iwaniec, Luo
and Sarnak \cite{ILS} (see Appendix C, (C.17) and (C.18)) that
\bea\label{GL2}
\sum_{n=1}^{\infty} \lambda_F(n)e(-2\sqrt{qn})V\left(\frac{n}{X}\right)
=\frac{\lambda_F(q)}{q^{1/4}}\hat{V}(0)X^{3/4}+O\big((qX)^{1/4+\varepsilon}\big),
\eea
for any positive integer $q$ and any $\varepsilon>0$, where $\lambda_F(n)$ are
the normalized Fourier coefficients of
a $\rm SL_2(\mathbb{Z})$ holomorphic cusp form $F$ of weight $\kappa$ and
$\hat{V}(0)=2^{-1}i^\kappa (1-i)\int_0^{\infty}V(x)x^{-1/4}\mathrm{d}x$.
Moreover, Kaczorowski and Perelli \cite{KP} improved and extended this result for Selberg class
and this was later revisited by Ren and Ye \cite{Ren-Ye} for
$\rm GL_m$ Maass cusp forms.

For $a_n=\lambda_F(n)$ being the Fourier coefficients of an automorphic form $F$,
a natural way to study the associated nonlinear exponential twisted sum is to directly use the
functional equation of the automorphic $L$-function $L(s,F)$ or equivalently,
the Voronoi formula for $\lambda_F(n)$, as shown in \cite{KP} and \cite{Ren-Ye}.
However, if the nonlinear exponential function $e\left(t \varphi\left(n/X\right)\right)$
oscillates strong enough, there is a chance to get more savings by separating
the oscillations of $\lambda_F(n)$ and $e\left(t \varphi\left(n/X\right)\right)$ using
the $\delta$-method.  Kumar, Mallesham and Singh \cite{KMS19} first implemented this idea for
$\rm GL_3$ Maass cusp forms by
using the Duke-Friedlander-Iwaniec $\delta$-method given in \cite{IK} together with
the conductor-lowering trick due to Munshi \cite{Mun1},
and proved that for $t=X^{\beta}$ and $\varphi(x)=\alpha x^{\beta}$
$(\alpha\in \mathbb{R}\backslash\{0\}, 0<\beta<1)$
\bna
\sum_{n=1}^{\infty}\lambda_{\pi}(1,n)e\left(t\varphi\bigg(\frac{n}{X}\bigg)\right)
V\bigg(\frac{n}{X}\bigg)\ll_{\pi,\alpha,\beta} t^{3/10}X^{3/4+\varepsilon},
\ena
which improved the estimate $O(X^{3\beta/2}\log X)$ by Ren and Ye \cite{Ren-Ye-1}
for $\beta>5/8$. Here $\lambda_{\pi}(1,n)$ are the normalized Fourier coefficients
of a Hecke-Maass cusp form $\pi$ for $\rm GL_3(\mathbb{Z})$. See also the first author \cite{HB}.
For cusp forms on $\rm GL_2$, the associated nonlinear exponential twisted
sums were studied in Aggarwal, Holowinsky, Lin and Qi \cite{AHLQ} by a Bessel $\delta$-method.
Recently, Lin and the second author \cite{LS} studied the $\rm GL_3\times\rm GL_2$ case by
using the Duke-Friedlander-Iwaniec $\delta$-method in \cite{IK},
but unlike \cite{KMS19} without the conductor-lowering trick (as in Aggarwal \cite{Agg}).

The goal of this paper is to study nonlinear exponential twists of $\rm GL_2\times\rm GL_2$ automorphic forms.
More precisely, let $f$ and $g$ be either holomorphic or Maass cusp forms
for $\rm SL_2(\mathbb{Z})$
with normalized Fourier coefficients $\lambda_f(n)$ and $\lambda_g(n)$, respectively. Define
\bea\label{natural-sum}
S(X,t)=\sum_{n=1}^{\infty}\lambda_f(n)
\lambda_g(n)e\left(t \varphi\left(\frac{n}{X}\right)\right)V\left(\frac{n}{X}\right).
\eea
Our main result states as follows.
\begin{theorem}\label{main-theorem}
Let $\varphi(x)=\alpha\log x$ or $\alpha x^{\beta}$ ($\beta\in (0,1)\backslash \{1/2,3/4\}$, $\alpha\in \mathbb{R}\backslash \{0\}$).
Let $V(x)\in \mathcal{C}_c^{\infty}(1,2)$ with total variation $\rm{Var}(V)\ll 1$
and satisfying the condition
\begin{equation}\label{derivative-of-V}
V^{(j)}(x)\ll_j \triangle^j
\end{equation} for any integer $j\geq 0$ with $\triangle\ll t^{1/2-\varepsilon}$ for any $\varepsilon>0$.
Then we have
\bna
S(X,t) \ll_{f,g,\varphi,\varepsilon}t^{2/5}X^{3/4+\varepsilon}
\ena
for $t^{8/5}<X<t^{12/5}$.

\end{theorem}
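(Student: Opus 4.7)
The plan is to apply the Duke–Friedlander–Iwaniec $\delta$-symbol method to detach the Hecke product $\lambda_f(n)\lambda_g(n)$ from the analytic twist $e(t\varphi(n/X))$, then apply $\rm GL_2$ Voronoi summation to each factor and finish with a Cauchy–Schwarz plus Poisson combination. Following the setup of Aggarwal \cite{Agg} and Lin–Sun \cite{LS} (without Munshi's conductor-lowering trick, since the oscillation $e(t\varphi(n/X))$ is already strong in the stated range), I would begin by inserting an auxiliary variable $m$ via the delta symbol, writing
\[
S(X,t) = \sum_{n,m \geq 1} \lambda_f(n)\lambda_g(m) e(t\varphi(n/X)) V(n/X) U(m/X) \delta(n-m),
\]
where $U \in \mathcal{C}_c^\infty(1/2,5/2)$ equals $1$ on the support of $V$, and then substituting the DFI expansion
\[
\delta(n-m) = \frac{1}{Q}\sum_{q \leq Q}\frac{1}{q}\sum_{a \bmod q}^{*} e\!\left(\frac{a(n-m)}{q}\right) \int_{\mathbb{R}} g(q,v) e\!\left(\frac{v(n-m)}{qQ}\right) dv
\]
with the natural choice $Q = \sqrt{X}$, which completely separates the $n$- and $m$-variables.

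Next I would apply $\rm GL_2$ Voronoi summation separately to each inner sum. The $m$-sum carries only the mild DFI phase $e(-vm/(qQ))$ and dualizes to a short sum over $\tilde m \lesssim q^2/X$ weighted by a Kloosterman-type factor $e(\bar a \tilde m/q)$ and an essentially trivial integral. The $n$-sum carries the full analytic twist; after Voronoi it becomes an oscillatory integral whose phase is a combination of the form $2\pi(t\varphi(y/X) + vy/(qQ)) \pm 4\pi\sqrt{\tilde n y}/q$. A careful stationary-phase analysis of this phase is the heart of the argument: the hypothesis that $\varphi(x) = \alpha\log x$ or $\alpha x^\beta$ with $\beta \in (0,1)\setminus\{1/2,3/4\}$ is used precisely here to guarantee a unique non-degenerate critical point, which localizes $\tilde n$ to a window of length $\asymp q^2 t^2/X$ and produces a $t^{-1/2}$ gain from the second-derivative contribution.

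After assembling the dual expression for $S(X,t)$, I would apply Cauchy–Schwarz in the $\tilde n$-variable to eliminate $\lambda_f(\tilde n)$, smoothly open the square, interchange summations, and apply Poisson summation in $a \bmod q$ (equivalently, in the shifted congruence class). The zero frequency yields a diagonal contribution controlled by Rankin–Selberg mean-square estimates for $\lambda_g$, while the non-zero frequencies give a further multidimensional oscillatory integral, to be estimated by another application of stationary phase (or the second-derivative test). Balancing the diagonal contribution against the non-zero frequency contribution, with $Q = \sqrt{X}$ and in the window $t^{8/5} < X < t^{12/5}$, is what delivers the target bound $t^{2/5} X^{3/4+\varepsilon}$; that window is exactly the one in which $Q$ lies in the admissible DFI range while both dualized sums remain shorter than the original.

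The principal obstacle is the stationary-phase bookkeeping after each of these transformations. At every step one must explicitly locate the critical points of the multivariable phases, verify non-degeneracy in both the logarithmic and the $\alpha x^\beta$ cases (the excluded values $\beta = 1/2, 3/4$ are precisely those at which the post-Voronoi or post-Poisson phase collapses and would force a loss), and control the secondary error terms from the stationary-phase expansions so that no single contribution exceeds the target. Additional care is needed when $q$ is unusually small or $v$ is unusually large, where the localization of $\tilde n$ breaks down and one must fall back on trivial estimation; verifying that these boundary regimes fit within $t^{2/5} X^{3/4+\varepsilon}$ is where the range restriction is most tightly felt.
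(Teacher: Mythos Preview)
Your overall architecture---DFI $\delta$-method, Voronoi on both factors, Cauchy--Schwarz, then Poisson---is the same as the paper's, but one choice is wrong in a way that kills the bound, and a second step is misdescribed.

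The critical gap is the choice $Q=\sqrt{X}$. The paper takes $Q=X^{1/2}/t^{2/5}$, not $\sqrt{X}$. After the dust settles the method produces a diagonal contribution of size $Q^{3/2}t$ and an off-diagonal contribution of size $X^{5/4}/Q$; balancing these forces $Q=X^{1/2}/t^{2/5}$ and yields $t^{2/5}X^{3/4+\varepsilon}$. With your $Q=\sqrt{X}$ the diagonal term is $X^{3/4}t$, which exceeds the trivial bound $X$ throughout $t^{8/5}<X<t^{12/5}$ (since there $t>X^{5/12}$). There is also a structural reason your choice fails: at $Q=\sqrt{X}$ the non-twisted dual variable has length $\asymp X/Q^2\asymp 1$, so after Cauchy--Schwarz the inner sum has essentially one term and Poisson has nothing to average. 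The window $t^{8/5}<X<t^{12/5}$ in the theorem is precisely the range in which $Q=X^{1/2}/t^{2/5}$ satisfies the constraints $(X/t)^{1/2}<Q<\min\{t,X^{1/3}\}$ needed in the argument.

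Second, the Poisson step is not ``in $a\bmod q$.'' After Voronoi on both variables the $a$-sum is evaluated immediately as a Ramanujan sum $S(\tilde m-\tilde n,0;q)=\sum_{d\mid(\tilde m-\tilde n,q)}d\,\mu(q/d)$; there is no further summation in $a$. Poisson is applied in the \emph{long} dual variable (the one coming from the twisted sum, your $\tilde n$) after Cauchy--Schwarz has pushed it outside the square and a smooth weight has been reinserted. The resulting correlation integral is then handled by stationary phase; the exclusion $\beta\neq 3/4$ enters at that stage, whereas $\beta\neq 1/2$ is needed earlier, in the stationary-phase expansion of the integral $\mathfrak{I}(m,n,q)$ that appears right after the two Voronoi steps. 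Also, the integral attached to the non-twisted Voronoi sum is not ``essentially trivial'': it is a genuine oscillatory integral (the $\zeta Xy/(qQ)$ phase against the Bessel phase) and requires its own stationary-phase evaluation before one can proceed.
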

\begin{remark}
The assumption $\triangle\ll t^{1/2-\varepsilon}$ arises when we use stationary phase
analysis for certain oscillatory integral in the proof (see \eqref{assumption-on-Delta}).
For the sake of simplicity, we have restricted $f$ and $g$ to be on the full modular group.
In fact, Theorem 1 can be similarly extended to modular forms of arbitrary level and nebentypus
without taking much effort.
\end{remark}

Since the test function $V$ in Theorem 1.1 allows oscillations,
we can remove it from the sum.
\begin{corollary}\label{sharp-cut-sum}
Same notation and assumptions as in Theorem \ref{main-theorem}. We have
\bna
\sum_{X<n\leq 2X}\lambda_f(n)
\lambda_g(n)e\left(t \varphi\left(\frac{n}{X}\right)\right)
\ll_{f,g,\varphi,\varepsilon} t^{2/5}X^{3/4+\varepsilon}
\ena
for $t^{8/5}< X<t^{12/5}$.
\end{corollary}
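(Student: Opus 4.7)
The plan is to deduce Corollary~\ref{sharp-cut-sum} from Theorem~\ref{main-theorem} by a standard smooth-approximation argument, exploiting the fact that the theorem already permits rapidly oscillating weights through the parameter $\triangle$. First, I introduce a small parameter $\eta>0$ (to be optimised) and construct a smooth bump function $V_{\eta}\in\mathcal{C}_c^{\infty}(1,2)$ with $V_{\eta}\equiv 1$ on $[1+\eta,2-\eta]$, $0\leq V_{\eta}\leq 1$, and $V_{\eta}^{(j)}(x)\ll_j \eta^{-j}$. Such a $V_{\eta}$ satisfies $\rm{Var}(V_{\eta})\leq 2$ and the derivative bound \eqref{derivative-of-V} with $\triangle=\eta^{-1}$.

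Next, I write $\mathbf{1}_{(X,2X]}(n)=V_{\eta}(n/X)+R(n/X)$, where $R$ is supported on $(1,1+\eta]\cup[2-\eta,2)$ and satisfies $|R|\leq 1$. The sum in the corollary then decomposes as $S(X,t)$ (with the weight $V_{\eta}$) plus an error
\bna
E\ll \sum_{n\in(X,(1+\eta)X]\,\cup\,[(2-\eta)X,2X]}|\lambda_f(n)\lambda_g(n)|.
\ena
By Cauchy--Schwarz together with the Rankin--Selberg estimates $\sum_{n\leq Y}|\lambda_f(n)|^2\ll Y^{1+\varepsilon}$ and $\sum_{n\leq Y}|\lambda_g(n)|^2\ll Y^{1+\varepsilon}$, this boundary contribution is bounded by $E\ll\eta X^{1+\varepsilon}$.

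I then apply Theorem~\ref{main-theorem} with the weight $V_{\eta}$, which yields $S(X,t)\ll t^{2/5}X^{3/4+\varepsilon}$, provided the admissibility condition $\eta^{-1}\ll t^{1/2-\varepsilon}$ holds. Balancing the two contributions by setting $\eta X = t^{2/5}X^{3/4}$ forces $\eta = t^{2/5}X^{-1/4}$, which lies in $(0,1)$ because $X>t^{8/5}$, and gives $\eta^{-1}=X^{1/4}t^{-2/5}$. The admissibility condition is then equivalent to $X\ll t^{18/5-\varepsilon}$, which is comfortably implied by the hypothesis $X<t^{12/5}$.

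There is no serious obstacle here: the genuine work has already been absorbed into Theorem~\ref{main-theorem} by explicitly allowing oscillating test functions, and what remains is essentially parameter balancing. The only minor points meriting care are the construction of a $V_{\eta}$ whose total variation is bounded independently of $\eta$ (clear since $V_{\eta}$ is monotone on each of the two transition intervals) and the uniformity of the Rankin--Selberg bound on short ranges for both $f$ and $g$; both are completely standard.
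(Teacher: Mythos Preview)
Your approach is essentially identical to the paper's: approximate the sharp cutoff by a smooth bump with transition width $\eta=1/\triangle$, bound the boundary contribution via Cauchy--Schwarz and Rankin--Selberg, then choose $\triangle$ compatibly with the hypothesis of Theorem~\ref{main-theorem} (the paper simply takes $\triangle=t^{1/2-\varepsilon}$ rather than balancing, but either choice works). The one imprecision is that the upper bound $\sum_{n\leq Y}|\lambda_f(n)|^2\ll Y^{1+\varepsilon}$ alone does \emph{not} yield a short-interval saving; you need the asymptotic $\sum_{n\leq Y}|\lambda_f(n)|^2=c_fY+O(Y^{3/5})$ (equation~\eqref{GL2: Rankin Selberg}), which then gives $E\ll\eta X$ provided $\eta\gg X^{-2/5}$---comfortably satisfied by your choice of $\eta$.
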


A special case of Theorem 1.1 is that $t=X^{\beta}$ and $\varphi(x)=\alpha x^{\beta}$
$(\alpha\in \mathbb{R}, 0<\beta<1, \beta\neq 1/2,3/4)$. Then Corollary 1.2 implies

\begin{corollary}
For any $\alpha\in \mathbb{R}\backslash\{0\}$, we have
\bna
\sum_{n\leq X}\lambda_f(n)
\lambda_g(n)e\big(\alpha n^{\beta}\big)
\ll_{f,g,\alpha,\beta,\varepsilon} X^{3/4+2\beta/5+\varepsilon}
\ena
for $5/12<\beta< 5/8$, $\beta\neq 1/2$.
\end{corollary}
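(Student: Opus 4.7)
The plan is to deduce Corollary 1.3 directly from Corollary \ref{sharp-cut-sum} via a dyadic decomposition of the range $n\leq X$. I would split
\[
\sum_{n\leq X}\lambda_f(n)\lambda_g(n)e(\alpha n^{\beta})
=\sum_{Y}\ \sum_{Y<n\leq 2Y}\lambda_f(n)\lambda_g(n)e(\alpha n^{\beta}),
\]
where $Y$ runs over $\{X\cdot 2^{-k}:k\geq 0,\ Y\gg 1\}$ (with an $O(1)$ tail for $Y=O(1)$), and for each dyadic scale I would rewrite the phase as $\alpha n^{\beta}=t\,\varphi(n/Y)$ with the choices $t=Y^{\beta}$ and $\varphi(x)=\alpha x^{\beta}$. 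This puts every dyadic piece precisely in the form handled by Corollary \ref{sharp-cut-sum}.

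To apply that corollary, the condition $t^{8/5}<Y<t^{12/5}$ becomes $Y^{8\beta/5}<Y<Y^{12\beta/5}$, which is equivalent to $5/12<\beta<5/8$, matching the hypothesis. The excluded value $\beta=3/4$ inherited from Theorem \ref{main-theorem} lies outside the interval $(5/12,5/8)$ and is therefore automatic, while $\beta\neq 1/2$ is the exclusion stated here. The corollary then produces, for each dyadic scale,
\[
\sum_{Y<n\leq 2Y}\lambda_f(n)\lambda_g(n)e(\alpha n^{\beta})
\ll_{f,g,\alpha,\beta,\varepsilon}(Y^{\beta})^{2/5}\,Y^{3/4+\varepsilon}
=Y^{3/4+2\beta/5+\varepsilon}.
\]

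Since $3/4+2\beta/5>0$, the sum over dyadic $Y\leq X$ of these bounds forms a convergent geometric series dominated by its top term $X^{3/4+2\beta/5+\varepsilon}$; the contribution of the finite $Y=O(1)$ tail is absorbed into the error. This completes the argument. The deduction is essentially formal: Corollary \ref{sharp-cut-sum} is used as a black box, and the only points requiring attention are the rescaling identity that identifies the correct parameters $(t,\varphi,Y)$ and the verification that the range condition $t^{8/5}<Y<t^{12/5}$ translates cleanly into the stated hypothesis on $\beta$. There is no genuine obstacle beyond this bookkeeping.
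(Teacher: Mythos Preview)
Your proof is correct and is precisely the deduction the paper has in mind: it simply states that Corollary~\ref{sharp-cut-sum} with $t=X^{\beta}$ and $\varphi(x)=\alpha x^{\beta}$ implies the result, leaving the dyadic decomposition and the translation of the range condition $t^{8/5}<X<t^{12/5}$ into $5/12<\beta<5/8$ implicit. You have just spelled out this bookkeeping.
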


For $15/32<\beta< 5/8$, $\beta\neq 1/2$,
Corollary 1.3 improves the estimate $O_{f,g,\alpha,\beta,\varepsilon}(X^{2\beta})$
by Czarnecki \cite{C}.

Theorem \ref{main-theorem} also admits an application in bounding
Rankin-Selberg $L$-functions on the critical line. We recall
\bna
L\left(s,f\otimes g\right)=\zeta(2s)\sum_{n=1}^{\infty}
\frac{\lambda_f(n)\lambda_g(n)}{n^{s}}
\ena
for $\mbox{Re}\,s>1$.
The convexity bound in the $t$-aspect is
$L\left(1/2+it,f\otimes g\right)\ll t^{1+\varepsilon}$ and recently
Acharya, Sharma and Singh \cite{ASS20} proved the subconvexity bound
$O_{f,g,\varepsilon} (t^{1-1/16+\varepsilon})$ by
using the Duke-Friedlander-Iwaniec $\delta$-method given in \cite{IK} together with
the conductor-lowering trick due to Munshi \cite{Mun1}. An application of the approximate
functional equation implies
\bna
L\left(\frac{1}{2}+it,f\otimes g\right)\ll \sup_{N\ll t^{2+\varepsilon}}\frac{1}{\sqrt{N}}
\left|\sum_{n=1}^{\infty}\lambda_f(n)
\lambda_g(n)n^{-it}V\left(\frac{n}{N}\right)\right|+ t^{-100}.
\ena
We demonstrate that the conductor-lowering trick in Acharya, Sharma and Singh's proof
can be removed and
applying Theorem 1.1 with $\varphi(x)=-(\log x)/2\pi$,
we improve the result of Acharya, Sharma and Singh.
\begin{corollary}\label{subconvexity} We have
\bna
L\left(1/2+it,f\otimes g\right)\ll_{f,g,\varepsilon} (1+|t|)^{9/10+\varepsilon}.
\ena
\end{corollary}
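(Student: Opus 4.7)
The plan is to combine the approximate functional equation displayed in the excerpt with Theorem~\ref{main-theorem}. The AFE reduces the problem to bounding
$$S_0(N) := \sum_{n=1}^{\infty}\lambda_f(n)\lambda_g(n)\, n^{-it}\, V(n/N)$$
for a fixed smooth $V \in \mathcal{C}_c^{\infty}(1,2)$, uniformly over $N \ll t^{2+\varepsilon}$, since
$$L\!\left(\tfrac12+it,\, f\otimes g\right) \ll \sup_{N \ll t^{2+\varepsilon}} N^{-1/2}\,|S_0(N)| + t^{-100}.$$

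To bring Theorem~\ref{main-theorem} to bear, I would factor $n^{-it} = N^{-it}(n/N)^{-it} = N^{-it}\,e\!\left(t\,\varphi(n/N)\right)$ with $\varphi(x) = -(\log x)/(2\pi)$, which is precisely the allowed logarithmic choice with $\alpha = -1/(2\pi) \neq 0$. Because $V$ is a fixed smooth function, its derivatives are bounded absolutely, so the hypothesis $V^{(j)} \ll \triangle^{j}$ holds trivially with $\triangle = 1 \ll t^{1/2-\varepsilon}$.

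Next I split the supremum according to whether $N$ lies below or above $t^{8/5}$. For $N \leq t^{8/5}$, outside the admissible range of Theorem~\ref{main-theorem}, I invoke the standard Rankin--Selberg bound $\sum_{n \sim N}|\lambda_f(n)\lambda_g(n)| \ll N^{1+\varepsilon}$ (obtained from Cauchy--Schwarz and the second moments of each form), which gives the acceptable $N^{-1/2}|S_0(N)| \ll N^{1/2+\varepsilon} \leq t^{4/5+\varepsilon}$. For $t^{8/5} < N \ll t^{2+\varepsilon}$, which sits comfortably inside the window $(t^{8/5}, t^{12/5})$ of Theorem~\ref{main-theorem} (since $2 < 12/5$), the theorem provides $|S_0(N)| \ll t^{2/5} N^{3/4+\varepsilon}$, and therefore $N^{-1/2}|S_0(N)| \ll t^{2/5} N^{1/4+\varepsilon}$. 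This quantity is increasing in $N$, so it is maximized at $N \sim t^{2+\varepsilon}$, where it equals $t^{2/5 + 1/2 + \varepsilon} = t^{9/10+\varepsilon}$.

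Taking the larger of the two contributions yields the claimed bound $L(1/2+it, f\otimes g) \ll t^{9/10+\varepsilon}$. There is no genuine obstacle beyond checking that $t^{2}$, the length handed to us by the AFE, falls strictly inside $(t^{8/5}, t^{12/5})$ and that the trivial bound suffices in the complementary short range; both verifications are numerical. All analytic difficulty is absorbed into Theorem~\ref{main-theorem}, so the corollary is essentially a packaging statement combining the AFE, a trivial Rankin--Selberg input, and the main theorem.
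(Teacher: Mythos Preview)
Your proposal is correct and follows essentially the same route as the paper: the paper merely records the approximate functional equation and states that applying Theorem~\ref{main-theorem} with $\varphi(x)=-(\log x)/2\pi$ gives the bound, and you have filled in exactly those details (splitting at $N=t^{8/5}$, using the trivial Rankin--Selberg bound below and Theorem~\ref{main-theorem} above).
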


The best record bound for $L\left(1/2+it,f\otimes g\right)$ is the Weyl type bound
$L\left(1/2+it,f\otimes g\right)\ll (1+|t|)^{2/3+\varepsilon}$ due to
Blomer, Jana and Nelson \cite{BJN} by combining in
a substantial way representation theory, local harmonic analysis, and analytic number theory.
Bernstein and Reznikov showed the bound $(1+|t|)^{5/6+\varepsilon}$ in \cite{BR} (see Remarks 7.2.2.2).

Now we consider another application of Theorem \ref{main-theorem}.
Let $L(s,F)$ be an $L$-function of degree $d$ with coefficients $\lambda_F(1)=1$, $\lambda_F(n)\in\mathbb{C}$.
It is a fundamental problem to prove an asymptotic formula for the sum
\bna
  \mathcal A(X,F) = \sum_{n\leq X} \lambda_F(n).
\ena
Let $(\mu_{1,F},\ldots, \mu_{d,F})$ be the Satake parameter of $F$ at $\infty$.
Assume $L_\infty(s,F)=\prod_{1\leq j\leq d}\Gamma_\mathbb{R}(s-\mu_{j,F})$ does
not have poles for $\mathrm{Re}(s)>1/2+1/d$,
where $\Gamma_\mathbb{R}(s)=\pi^{-s/2}\Gamma(s/2)$.
Under the Ramanujan-Petersson conjecture
$\lambda_F(n)\ll n^{\varepsilon}$, Friedlander and Iwaniec \cite{Fri-Iwa}
established the following identity which
relates $\mathcal A(X,F)$ to its dual sum $\mathcal B(X,N)$
\bea\label{FI-Functional-eq}
\mathcal A(X,F)=\mathrm{Res}_{s=1}\frac{L(s,F)}{s}X+c_F\, X^{\frac{d-1}{2d}}\mathcal B(X,N)
+O\left(N^{-\frac{1}{d}}X^{\frac{d-1}{d}+\varepsilon}\right),
\eea
where $c_F$ is some constant depending on the form $F$ only and
Let
\bna
\mathcal B(X,N)=\sum_{n\leq N}\overline{\lambda_F(n)}\, n^{-\frac{d+1}{2d}}
\cos(2\pi d\left(nX\right)^{1/d}).
\ena
In particular, by estimating the sum $B(X,N)$ trivially and choosing
$N=X^{(d-1)/(d+1)}$, Friedlander and Iwaniec showed that
\bea\label{trivial-one}
\mathcal A(X,F)=\mathrm{Res}_{s=1}\frac{L(s,F)}{s}X+O\left(X^{\frac{d-1}{d+1}+\varepsilon}\right)
\eea
for any $\varepsilon>0$.
For $\lambda_F(n)=\sum_{n_1n_2n_3=n}\chi_1(n_1)
\chi_2(n_2)\chi_3(n_3)$, $\chi_j$ being primitive Dirichlet characters,
Friedlander and Iwaniec \cite{Fri-Iwa} proved an asymptotic formula with the error term
$O(X^{1/2-1/150+\varepsilon})$.
Recently, for $F=1\boxplus f$ and $\lambda_F(n)=\sum_{\ell m=n}\lambda_f(m)$, where
$f$ is a holomorphic cusp form for $\rm SL_2(\mathbb{Z})$,
Huang, Lin and Wang \cite{HLW} proved an asymptotic formula with the error term
$O(X^{1/2-4/739+\varepsilon})$. For $F=1\boxplus \mathrm{sym}^2f$
and $\lambda_F(n)=\sum_{\ell^2 m=n}\lambda_f(m)^2$, where
$f$ is a Hecke-holomorphic or Hecke-Maass cusp form for $\rm SL_2(\mathbb{Z})$,
Huang \cite{HB} proved an asymptotic formula with the error term
$O(X^{3/5-1/560+\varepsilon})$.
Under the Ramanujan-Petersson conjecture Lin and the second author \cite{LS} considered the
$\rm GL_3\times \rm GL_2$ case and proved the bound $O(X^{5/7-1/364+\varepsilon})$
for $F=\pi\otimes f$, where $\pi$ is a Hecke--Maass cusp form
for $\rm SL_3(\mathbb{Z})$ and $f$ is a holomorphic or Maass cusp form
for $\rm SL_2(\mathbb{Z})$.

As an application of Therorem \ref{main-theorem}, we improve \eqref{trivial-one}
for $F$ being certain $\rm GL_5$ Eisenstein series, namely when
$F=1\boxplus(f \times g)$ and $L(s,F)=\zeta(s)L(s,f\times g)$.
For simplification, we consider the holomorphic case.
In fact, our argument holds also for Maass cusp forms under the Ramanujan-Petersson
conjecture. Now let $f$ and $g$ be holomorphic Hecke cusp forms for
$\mathrm{SL}_2(\mathbb{Z})$ of weight $k$, $\kappa$, with
$k \geq \kappa \geq 12$, with normalized Fourier coefficients
$\lambda_f(n)$ and $\lambda_g(n)$, respectively. For $\rm{Re}(s) > 1$, we define
$$
L(s,1\boxplus(f \times g))= \zeta(s)L(s,f \times g)
= \zeta(s)\zeta(2 s)\sum_{n=1}^{\infty}
\frac{\lambda_f(n)\lambda_g(n)}{n^s}=
\sum_{n=1}^{\infty}\frac{\lambda_{1\boxplus(f\times g)}(n)}{n^s},
$$
where
$\lambda_{1\boxplus(f\times g)}(n):= \sum_{lm^2r=n}\lambda_f(r)\lambda_g(r)$.
Note that \eqref{trivial-one} reads
 $$
  \sum_{n \leq X}\lambda_{1\boxplus(f\times g)}(n)
  = L(1,f\times g)X + O(X^{\frac{2}{3}+\varepsilon}).
  $$
  We shall prove the following result.
\begin{corollary}\label{cor:main}
  Let $f$ and $g$ be holomorphic Hecke cusp forms for
  $\mathrm{SL}_2(\mathbb{Z})$ of weight $k$ and $\kappa$ with
  $12 \leq \kappa \leq k$, with normalized Fourier coefficients
  $\lambda_f(n)$ and $\lambda_g(n)$, respectively. Assume $f \perp g$. Then we have
  $$
  \sum_{n \leq X}\lambda_{1\boxplus(f\times g)}(n)
  = L(1,f\times g)X + O(X^{\frac{2}{3}-\frac{1}{356} +\varepsilon})
  $$
for any $\varepsilon>0$.
\end{corollary}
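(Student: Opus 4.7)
The plan is to feed Theorem \ref{main-theorem} into the Friedlander--Iwaniec identity \eqref{FI-Functional-eq} applied to $F=1\boxplus(f\times g)$, which has degree $d=5$. The hypothesis $f\perp g$ guarantees that $L(s,f\times g)$ is entire at $s=1$, so $\mathrm{Res}_{s=1}L(s,F)/s=L(1,f\times g)$ and the claimed main term is correct. The remaining task is to produce a non-trivial bound for
\bna
  X^{2/5}\,\mathcal B(X,N)+O\!\left(N^{-1/5}X^{4/5+\varepsilon}\right),
\ena
with the auxiliary parameter $N$ to be optimised at the end.

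Using $\lambda_{1\boxplus(f\times g)}(n)=\sum_{\ell m^2 r=n}\lambda_f(r)\lambda_g(r)$ together with a smooth dyadic partition $\ell\sim L$, $m\sim M$, $r\sim R$ with $LM^2R\asymp N$, I will reduce $\mathcal B(X,N)$ to a sum of pieces of the shape
\bna
  \sum_{\ell\sim L,\,m\sim M}(\ell m^2)^{-3/5}\sum_{r\sim R}\lambda_f(r)\lambda_g(r)\,r^{-3/5}\cos\!\left(10\pi(\ell m^2 X)^{1/5}\,r^{1/5}\right).
\ena
For each fixed $(\ell,m)$, the inner $r$-sum has precisely the form of Theorem \ref{main-theorem} with $\varphi(x)=x^{1/5}$ and $t=5(\ell m^2 X)^{1/5}R^{1/5}$, the $r^{-3/5}$ being absorbed into the bump function $V$. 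The admissibility window $t^{8/5}<R<t^{12/5}$ in Theorem \ref{main-theorem}, combined with $LM^2 R\asymp N$, translates into a window of the form $(NX)^{8/25}\ll R\ll (NX)^{12/25}$; inside it, Theorem \ref{main-theorem} yields the inner bound $t^{2/5}R^{3/4+\varepsilon}$, and summing over $(\ell,m)$ and the admissible dyadic $R$'s produces a strict power saving over the trivial estimate $N^{2/5+\varepsilon}$.

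The complementary regimes for $R$ (too small or too large for Theorem \ref{main-theorem}) must be handled separately, and this is the main technical obstacle. For short $R$, I plan to exchange the order of summation and study, for each fixed $r$, the divisor-type oscillatory sum
\bna
  \sum_{\ell m^2\leq N/r}(\ell m^2)^{-3/5}\cos\!\left(10\pi(\ell m^2 rX)^{1/5}\right),
\ena
whose coefficients are those of $\zeta(s)\zeta(2s)$. A Dirichlet hyperbola decomposition, followed by van der Corput's first- and second-derivative tests (or equivalently a Voronoi expansion for the coefficients of $\zeta(s)\zeta(2s)$), should yield a power-saving bound in this regime. The long-$R$ range, where the phase in $r$ is slowly varying, is handled directly by stationary phase. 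The combined estimate will take the form $\mathcal B(X,N)\ll N^{2/5-\eta}X^{\varepsilon}$ for an explicit $\eta>0$.

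Finally, I will choose $N$ so as to balance $X^{2/5}\mathcal B(X,N)$ against the residual error $N^{-1/5}X^{4/5+\varepsilon}$ in \eqref{FI-Functional-eq}; this produces the error exponent $\tfrac{2}{3}-\tfrac{1}{356}$. The hardest step is the complementary $R$-regime: once Theorem \ref{main-theorem} is out of reach, all the cancellation must come from the intrinsic oscillation of $\cos(10\pi(\ell m^2 rX)^{1/5})$ in $(\ell,m)$, and it is the precise size of that cancellation, combined with the Theorem \ref{main-theorem} saving in the admissible window, that pins down the numerical constant $1/356$.
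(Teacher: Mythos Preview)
Your overall strategy---reducing via the Friedlander--Iwaniec identity to the dual sum, opening $\lambda_{1\boxplus(f\times g)}(n)=\sum_{\ell m^2 r=n}\lambda_f(r)\lambda_g(r)$, and feeding the $r$-sum into Theorem~\ref{main-theorem}---is exactly the backbone of the paper's argument. (The paper re-derives the dual-sum expression directly via Mellin inversion and the functional equation of $L(s,1\boxplus(f\times g))$ rather than quoting \eqref{FI-Functional-eq}, but the two routes are equivalent here.) However, your handling of the complementary regime contains a genuine gap that prevents you from reaching the exponent $\tfrac{2}{3}-\tfrac{1}{356}$.

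First, the ``long $R$'' case you describe does not occur: since $R\le LM^2R\asymp T^5/X$ and (in the relevant range) $T\ll X^{5/13}$, one always has $R\ll T^{12/5}$, so the upper constraint in Theorem~\ref{main-theorem} is never violated. Moreover, when $R\le T^{8/5}$ the Theorem~\ref{main-theorem} bound $T^{2/5}R^{3/4}$ is dominated by the trivial bound $R$, so the estimate $\sum_r \ll T^{2/5}R^{3/4+\varepsilon}$ is in fact valid for \emph{all} $R$ in the problem. The paper therefore does not split according to the $R$-window at all; it splits according to the size of $L$, using the $r$-sum bound in one range and an exponent-pair bound on the $\ell$-sum in the other, and optimises the threshold.

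Second, and more seriously, the constant $\tfrac{1}{356}$ hinges on the specific exponent pair used on the $\ell$-sum. The paper applies the pair $(p,q)=(\tfrac{13}{194}+\varepsilon,\tfrac{76}{97}+\varepsilon)$, obtained from Bourgain's pair via one $A$-process; the resulting exponent is the minimiser of $\frac{38+33p-28q}{58+48p-43q}$ over known pairs, and evaluates to $\tfrac{709}{1068}=\tfrac{2}{3}-\tfrac{1}{356}$. Your proposed tools---van der Corput's first- and second-derivative tests, i.e.\ essentially the pairs $(0,1)$ and $(\tfrac12,\tfrac12)$---are far too weak here: plugging $(\tfrac12,\tfrac12)$ into that expression gives $\tfrac{81}{121}>\tfrac{2}{3}$, so the second-derivative test yields \emph{no} improvement over the Friedlander--Iwaniec baseline. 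Voronoi for the $\zeta(s)\zeta(2s)$ coefficients does not help either, since after dualisation you still face an exponential sum of the same strength. To recover the stated corollary you must replace the van der Corput step by Bourgain's exponent pair (or something at least as strong) applied to the $\ell$-sum.
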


\begin{remark}
  At the end of the proof of Corollary \ref{cor:main} we will use the exponent pair $(\frac{13}{194}+\varepsilon,\, \frac{76}{97}+\varepsilon)$ which is a consequence of Bourgain's exponential pair in \cite{Bourgain} and the A-process in the theory of exponential pairs.
  This is the best known exponent pair we find for our problem. We essentially need to choose an exponent pair $(p,q)$ to minimize $\frac{38+33p-28q}{58+48p-43q}$.
\end{remark}

\medskip
The paper is organized as follows. In Section \ref{sketch-of-proof}, we
provide a quick sketch and key steps of the proof. In
Section \ref{review-of-cuspform}, we review some basic materials of
automorphic forms on $ \rm GL_2$ and estimates on exponential integrals.
Sections \ref{details-of-proof}
and \ref{proofs-of-technical-lemma} give details of the proof for Theorem \ref{main-theorem}
and in Sections \ref{proofs-of-corollary} and  \ref{sec:Cor} we complete the proofs
for Corollaries \ref{sharp-cut-sum} and \ref{cor:main}, respectively.

\bigskip
\noindent
{\bf Notation.}
Throughout the paper, the letters $q$, $m$ and $n$, with or without subscript,
denote integers. The letters $\varepsilon$ and $A$ denote arbitrarily small and large
positive constants, respectively, not necessarily the same at different occurrences.
We use $A\asymp B$ to mean that $c_1B\leq |A|\leq c_2B$ for some positive constants $c_1$ and $c_2$. The symbol
$\ll_{a,b,c}$ denotes that the implied constant depends at most on $a$, $b$ and $c$, and
$q\sim C$ means $C<q\leq 2C$.

\section{Outline of the proof}\label{sketch-of-proof}

In this section, we provide a quick sketch of the proof for Theorem \ref{main-theorem}.
Suppose we are working with the following sum
\bna
\mathcal{S}=\sum_{n\sim X}\lambda_f(n)
\lambda_g(n)e\left(t\varphi\bigg(\frac{n}{X}\bigg)\right).
\ena

The first step is writing
\bna
\mathcal{S}=\sum_{n\sim X}\lambda_f(n)\sum_{m\sim X}\lambda_g(m)
e\left(t\varphi\bigg(\frac{m}{X}\bigg)\right)\delta(m-n,0),
\ena
and using the $\delta$-method to detect the
Kronecker delta symbol $\delta(m-n,0)$. As in \cite{LS}, we use
the Duke-Friedlander-Iwaniec's $\delta$-method \eqref{DFI's} to write
\bea\label{before-voronoi}
\mathcal{S}&=&\frac{1}{Q} \int_{-X^{\varepsilon}}^{X^{\varepsilon}}
\sum_{q\sim Q}\frac{1}{q}\;\sideset{}{^\star}\sum_{a\bmod{q}}\,
\sum_{n\sim X}\lambda_f(n)e\left(-\frac{na}{q}\right)
e\left(-\frac{n\zeta}{qQ}\right)\nonumber\\
&&\sum_{m\sim X}\lambda_g(m)e\left(\frac{ma}{q}\right)
e\left(t\varphi\bigg(\frac{m}{X}\bigg)+\frac{m\zeta}{qQ}\right)\mathrm{d}\zeta,
\eea
where the $\star$ in the sum over $a$ means that the sum is restricted to $(a,q)=1$.

Next, we use  the $\mathrm{GL}_2$ Voronoi summation formulas to dualize the $m$- and $n$-sums.
The $m$-sum can be transformed into the following
\bea\label{GL2dual}
&&\sum_{m\sim X}\lambda_g(m)e\left(\frac{ma}{q}\right)
e\left(t\varphi\bigg(\frac{m}{X}\bigg)+\frac{m\zeta}{qQ}\right)
V\left(\frac{m}{X}\right)\nonumber\\
&&\leftrightarrow \frac{X}{Qt^{1/2}}
\sum_{\pm}\sum_{m\sim Q^2t^2/X}\lambda_g(m)e\left(-\frac{m\bar{a}}{q}\right)
\Phi^{\pm}\left(m,q,\zeta\right),
\eea
where
 \bna
\Phi^{\pm}\left(m,q,\zeta\right)=\int_0^\infty V(y)y^{-1/4}
e\left(t\varphi(y)+\frac{\zeta Xy}{qQ}\pm\frac{2\sqrt{mXy}}{q}\right)\mathrm{d}y.
\ena
If we assume for example $\varphi'(x)>0$, then by integration by parts,
$\Phi^{+}\left(m,q,\zeta\right)\ll X^{-A}$, and we only need
to consider the minus sign contribution.
Similarly, for the $n$-sum, we have
\bea\label{GL3dual}
&&\sum_{n\sim X}\lambda_f(n)e\left(-\frac{na}{q}\right)
e\left(-\frac{n\zeta}{qQ}\right)U\left(\frac{n}{X}\right)\nonumber\\
&&\leftrightarrow
X^{1/2}\sum_{n\sim X/Q^2}\lambda_f(n)e\left(\frac{n\bar{a}}{q}\right)
\Psi^{+}\left(n,q,\zeta\right)+O_A(X^{-A}),
\eea
where
\bna
\Psi^{+}\left(n,q,\zeta\right)=\int_0^\infty U(y)y^{-1/4}
e\left(-\frac{\zeta Xy}{qQ}+\frac{2\sqrt{nXy}}{q}\right)\mathrm{d}y.
\ena

We perform a stationary phase argument to get (note that $n\sim X/Q^2$)
\bna
\Psi^{+}\left(n,q,\zeta\right)\asymp \frac{q^{1/2}}{(nX)^{1/4}}
e\left(\frac{nQ}{q\zeta}\right)
U^{\natural}\left(\frac{nQ^2}{X\zeta^2}\right)
\asymp \frac{Q}{X^{1/2}}
e\left(\frac{nQ}{q\zeta}\right)
U^{\natural}\left(\frac{nQ^2}{X\zeta^2}\right)
\ena
for some smooth compactly supported function $U^{\natural}(y)$.
Then by plugging the dual sums \eqref{GL2dual} and \eqref{GL3dual} back into \eqref{before-voronoi}
and switching the orders of integration over $\zeta$ and $y$, we roughly get
\bea\label{intermediateS(X)}
\begin{split}
\mathcal{S}\approx& \frac{X}{Q^2t^{1/2}}\sum_{q\sim Q}\,
\sum_{m\sim Q^2t^2/X}
\lambda_g(m)
\sum_{n\sim X/Q^2}
\lambda_f(n)S(m-n,0;q)\\
&\times\int_0^\infty V(y)y^{-1/4}
e\left(t\varphi(y)-\frac{2\sqrt{mXy}}{q}\right)\mathcal{K}(y;n,q)\, \mathrm{d}y
\end{split}
\eea
where
\bna
\mathcal{K}(y;n,q)=
\int_{-X^{\varepsilon}}^{X^{\varepsilon}}
U^{\natural}\left(\frac{nQ^2}{X\zeta^2}\right)
e\left(\frac{\zeta Xy}{qQ}+\frac{nQ}{q \zeta}\right)
\mathrm{d}\zeta.
\ena

We evaluate the integral $\mathcal{K}(y;n,q)$ using the stationary phase method
(note that $n\sim X/Q^2$)
\bna
\mathcal{K}(y;n,q)\asymp \frac{n^{1/4}q^{1/2}Q}{X^{3/4}}e\left(\frac{2\sqrt{nXy}}{q}\right)
F(y)\asymp \frac{Q}{X^{1/2}}e\left(\frac{2\sqrt{nXy}}{q}\right)
F(y)
\ena
for some smooth compactly supported function $F(y)$.
Hence putting things together and writing the Ramanujan sum
$S\left(m-n,0;q\right)$ as $\sum_{d|(m-n,q)}d\mu(q/d)$, $\mathcal{S}$ in \eqref{intermediateS(X)} is roughly
equal to
\bna
\begin{split}
& \frac{X^{1/2}}{Qt^{1/2}}\sum_{q\sim Q}\,\sum_{d|q}d\mu\left(\frac{q}{d}\right)
\sum_{m\sim Q^2t^2/X}
\lambda_g(m)
\sum_{n\sim X/Q^2\atop n\equiv m\bmod d}
\lambda_f(n)\mathfrak{I}(m,n,q),
\end{split}
\ena
where
\bea\label{I-integral0}
\mathfrak{I}(m,n,q)=\int_0^\infty \widetilde{V}(y)
e\left(t\varphi(y)+\frac{2\sqrt{nXy}}{q}-\frac{2\sqrt{mXy}}{q}\right)\, \mathrm{d}y
\eea
for some smooth compactly supported function $\widetilde{V}(y)$.
Assume $\varphi(x)=c\log x$ or $cx^{\beta}$ with $\beta\in (0,1), \beta\neq 1/2$.
We apply the stationary phase analysis to the integral $\mathfrak{I}(m,n,q)$
 to get
\bna
\mathfrak{I}(m,n,q)
\sim e\left(t\varphi(y_0^2)-Dy_0\right)\mathfrak{I}^*(m,n,q)
\ena
where $y_0=\left(ct/D\right)^{1/\beta}\asymp 1$ with $D=2q^{-1}(mX)^{1/2}$ and
\bna
\mathfrak{I}^*(m,n,q)
\asymp t^{-1/2}e\left(\frac{2y_0n^{1/2}X^{1/2}}{q}\right).
\ena

To prepare for an application of the Poisson summation in the $m$-variable,
we now apply the Cauchy-Schwarz inequality to smooth the $m$-sum
and put the $n$-sum inside the absolute value squared to get
\bna
\mathcal{S}&\ll& \frac{X^{1/2}}{Qt^{1/2}}\sum_{q\sim Q}\,\sum_{d|q}d
\bigg(\sum_{m\sim Q^2t^2/X}|\lambda_g(m)|^2\bigg)^{1/2}
\bigg(\sum_{m\sim Q^2t^2/X}\bigg|\sum_{n\sim X/Q^2\atop n\equiv m\bmod d}
\lambda_f(n)\mathfrak{I}^*(m,n,q)\bigg|^2\bigg)^{1/2}\\
&\ll&t^{1/2}\sum_{q\sim Q}\,\sum_{d|q}d
\bigg(\sum_{m\sim Q^2t^2/X}\bigg|\sum_{n\sim X/Q^2\atop n\equiv m\bmod d}
\lambda_f(n)\mathfrak{I}^*(m,n,q)\bigg|^2\bigg)^{1/2}.
\ena

\begin{remark}
If we open the absolute value squared, by the Rankin-Selberg estimate
for $\lambda_f(n)$ and the trivial estimate $\mathfrak{I}^*(m,n,q)\ll t^{-1/2}$,
the contribution from the diagonal term
$n=n'$ is given by
\begin{equation}\label{S-diagonal}
\begin{split}
\mathcal{S}_{\text{diag}}
\ll& t^{1/2}\sum_{q\sim Q}\,\sum_{d|q}d
\bigg(\sum_{m\sim Q^2t^2/X}\sum_{n\sim X/Q^2\atop n\equiv m\bmod d}
|\lambda_f(n)|^2|\mathfrak{I}^*(m,n,q)|^2\bigg)^{1/2}\\
\ll& Q^{3/2}t,
\end{split}
\end{equation}
which will be fine for our purpose (i.e., $S_{\text{diag}}=o(X)$) as long as $Q\ll (X/t)^{3/2}$.
\end{remark}

Note that the oscillation in the $m$-variable of $\mathfrak{I}^*(m,n,q)$ in \eqref{I-integral0}
 is of size $2y_0n^{1/2}X^{1/2}/q\approx X/Q^2$.
So opening the absolute value squared and applying the Poisson summation formula
in the $m$-variable, we have
\begin{equation*}
\begin{split}
\sum_{m\sim Q^2t^2/X\atop m\equiv n\bmod d}\mathfrak{I}^*(m,n,q)
\overline{\mathfrak{I}^*(m,n',q)}\leftrightarrow \frac{Q^2t^2}{dX}
\sum_{\tilde{m}\ll \frac{dX/Q^2}{Q^2t^2/X}}
\,\mathcal{H}\left(\frac{\tilde{m}Q^2t^2}{dX}\right),
\end{split}
\end{equation*}
where
\bea\label{correlation-integral}
\mathcal{H}(x)=\int_{\mathbb{R}}
\mathfrak{I}^*\left(Q^2t^2\xi/X,n,q\right)
\overline{\mathfrak{I}^*\left(Q^2t^2\xi/X,n',q\right)}
\, e\left(-x\xi\right)\mathrm{d}\xi.
\eea

The contribution to $\mathcal{S}$ from the zero-frequency $\tilde{m}=0$
will roughly correspond to the diagonal contribution $S_{\text{diag}}$ in \eqref{S-diagonal}.
For the non-zero frequencies from the terms with $\tilde{m}\neq 0$, we note that
by performing stationary phase analysis, when $|x|$ is ``large", the expected estimate for
the triple integral $\mathcal{H}(x)$ in \eqref{correlation-integral} is
\bea\label{expect}
\mathcal{H}(x)\ll t^{-1/2}\cdot t^{-1/2}\cdot |x|^{-1/2},
\eea
which comes from the square-root cancellation of the two inner integrals
and the square-root cancellation in the $\xi$-variable. Note that this estimate does not hold for
``small" $|x|$. In fact, for these exceptional cases
the ``trivial" bound $\mathcal{H}(x)\ll t^{-1/2}\cdot t^{-1/2}$ will suffice for our purpose.
(These are the content of Lemma \ref{integral:lemma}). We ignore these exceptions and
plug the expected estimate \eqref{expect} for  $\mathcal{H}(x)$
into $\mathcal{S}$. It turns out
that the non-zero frequencies contribution $S_{\text{off-diag}}$ from
$\tilde{m}\neq 0$ to $\mathcal{S}$ is given by
\bna
S_{\text{off-diag}}&\ll&t^{1/2}\sum_{q\sim Q}\,\sum_{d|q}d
\bigg(\sum_{n\sim X/Q^2}|\lambda_f(n)|^2
\sum_{n'\sim X/Q^2\atop n'\equiv n\bmod d}\frac{Q^2t^2}{dX}
\sum_{0\neq \widetilde{m}\ll dX^2/(Q^4t^2)}\frac{d^{1/2}X^{1/2}}{|\widetilde{m}|^{1/2}Qt^2}\bigg)^{1/2}\\
&\ll&\frac{X^{5/4}}{Q}+X^{3/4}Q^{1/2}\\
&\ll&\frac{X^{5/4}}{Q}
\ena
provided that $Q<X^{1/3}$.
Hence combining this with the diagonal contribution $S_{\text{diag}}$ in \eqref{S-diagonal}, we get
\bna
\mathcal{S}\ll  Q^{3/2}t+\frac{X^{5/4}}{Q}.
\ena
By choosing $Q=X^{1/2}/t^{2/5}$ we obtain $\mathcal{S}\ll t^{2/5}X^{3/4}$ provided that $X<t^{12/5}$,
which improves over the trivial bound $\mathcal{S}\ll X$ as long as $t^{8/5}\ll X$.

\section{Preliminaries}\label{review-of-cuspform}

First we recall some basic results on automorphic forms for $\mathrm{GL}_2$.

\subsection{Holomorphic cusp forms for $\mathrm{GL}_2$}

Let $f$ be a holomorphic cusp form of weight $\kappa$ for $\rm SL_2(\mathbb{Z})$
with Fourier expansion
\bna
f(z)=\sum_{n=1}^{\infty}\lambda_f(n)n^{(\kappa-1)/2}e(nz)
\ena
for $\mbox{Im}\,z>0$, normalized such that $\lambda_f(1)=1$.
By the Ramanujan-Petersson conjecture proved by Deligne \cite{Del},
we have
$
\lambda_f(n)\ll \tau(n)\ll n^{\varepsilon}
$
with $\tau(n)$ being the divisor function.

For $h(x)\in \mathcal{C}_c(0,\infty)$, we set
\bea\label{intgeral transform-1}
\Phi_h(x) =2\pi i^{\kappa} \int_0^{\infty} h(y) J_{\kappa-1}(4\pi\sqrt{xy})\mathrm{d}y,
\eea
where $J_{\kappa-1}$ is the usual $J$-Bessel function of order $\kappa-1$.
We have the following Voronoi summation formula (see \cite[Theorem A.4]{KMV}).

\begin{lemma}\label{voronoiGL2-holomorphic}
Let $q\in \mathbb{N}$ and $a\in \mathbb{Z}$ be such
that $(a,q)=1$. For $X>0$, we have
\bna\label{voronoi for holomorphic}
\sum_{n=1}^{\infty}\lambda_f(n)e\left(\frac{an}{q}\right)h\left(\frac{n}{X}\right)
=\frac{X}{q} \sum_{n=1}^{\infty}\lambda_f(n)
e\left(-\frac{\overline{a}n}{q}\right)\Phi_h\left(\frac{nX}{q^2}\right),
\ena
where $\overline{a}$ denotes
the multiplicative inverse of $a$ modulo $q$.
\end{lemma}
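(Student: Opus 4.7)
The plan is to prove this standard $\mathrm{GL}_2$ Voronoi formula via Mellin inversion combined with the functional equation of the additively twisted $L$-series, following the argument in \cite{KMV}. First I would apply Mellin inversion to $h\in\mathcal{C}_c(0,\infty)$: writing $h(y)=\frac{1}{2\pi i}\int_{(\sigma)}\tilde h(s)y^{-s}\,ds$ for $\sigma$ inside the strip of absolute convergence, the left-hand side of the Voronoi formula becomes
\begin{equation*}
\sum_{n=1}^{\infty}\lambda_f(n)e(an/q)h(n/X)=\frac{1}{2\pi i}\int_{(\sigma)}\tilde h(s)\, X^s\, L_{a/q}(s,f)\,ds,
\end{equation*}
where $L_{a/q}(s,f):=\sum_{n\geq1}\lambda_f(n)e(an/q)n^{-s}$.

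Next I would establish the functional equation for this additive twist. For $\Re s$ large, the completed $L$-function admits the integral representation
\begin{equation*}
\Lambda_{a/q}(s,f):=\left(\frac{q}{2\pi}\right)^s\Gamma\!\left(s+\tfrac{\kappa-1}{2}\right) L_{a/q}(s,f) = q^{(\kappa-1)/2}\int_0^{\infty} f\!\left(\tfrac{a}{q}+\tfrac{iy}{q}\right) y^{s+(\kappa-1)/2-1}\,dy.
\end{equation*}
Splitting this integral at $y=1$ and applying the modular transformation $f(-1/z)=z^{\kappa}f(z)$ via the matrix in $\mathrm{SL}_2(\mathbb{Z})$ sending $a/q$ to $-\bar a/q$ (which exists because $a\bar a\equiv1\bmod q$) yields
\begin{equation*}
\Lambda_{a/q}(s,f) = i^{\kappa}\,\Lambda_{-\bar a/q}(1-s,f);
\end{equation*}
entirety of $L_{a/q}(s,f)$ follows from cuspidality of $f$.

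With the functional equation in hand, I would shift the contour from $(\sigma)$ to $(-\sigma)$, crossing no poles, substitute the functional equation, expand $L_{-\bar a/q}(1-s,f)$ as its Dirichlet series, and swap sum and integral to obtain
\begin{equation*}
\frac{i^{\kappa}q}{2\pi}\sum_{n=1}^{\infty}\frac{\lambda_f(n)e(-\bar a n/q)}{n}\cdot\frac{1}{2\pi i}\int_{(-\sigma)}\tilde h(s)\left(\frac{4\pi^2 nX}{q^2}\right)^{s}\frac{\Gamma\!\left(\tfrac{\kappa-1}{2}+1-s\right)}{\Gamma\!\left(\tfrac{\kappa-1}{2}+s\right)}\,ds.
\end{equation*}
The main remaining step — and the one where the $J$-Bessel function emerges from the abstract functional equation — is to identify this inner inverse Mellin integral with $\Phi_h(nX/q^2)$. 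I would do this via the classical Mellin transform
\begin{equation*}
\int_0^{\infty} J_{\kappa-1}(4\pi\sqrt{xy})\, x^{s-1}\,dx = (2\pi)^{-2s}\, y^{-s}\,\frac{\Gamma\!\left(\tfrac{\kappa-1}{2}+s\right)}{\Gamma\!\left(\tfrac{\kappa+1}{2}-s\right)},
\end{equation*}
which yields $\widetilde{\Phi_h}(s)=2\pi i^{\kappa}(2\pi)^{-2s}\tilde h(1-s)\,\Gamma(\tfrac{\kappa-1}{2}+s)/\Gamma(\tfrac{\kappa+1}{2}-s)$. After the change of variables $s\mapsto1-s$ this matches the integrand above exactly, recovering the desired $\frac{X}{q}\sum_n\lambda_f(n)e(-\bar a n/q)\Phi_h(nX/q^2)$. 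The only real obstacle is the bookkeeping of gamma factors and powers of $2\pi$ in this last step, which is routine but unforgiving.
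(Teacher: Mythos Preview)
The paper does not actually prove this lemma; it merely cites \cite[Theorem~A.4]{KMV}. Your proposal is a correct outline of the standard proof --- Mellin inversion, the functional equation of the additively twisted $L$-series obtained from modularity of $f$ under the matrix $\left(\begin{smallmatrix}-\bar a & (\bar a a-1)/q\\ q & -a\end{smallmatrix}\right)\in\mathrm{SL}_2(\mathbb{Z})$, a contour shift, and identification of the resulting gamma ratio with the Mellin transform of the $J$-Bessel kernel --- which is precisely the argument underlying the cited reference.

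Two small remarks. First, the constant in your integral representation of $\Lambda_{a/q}(s,f)$ is off: a direct computation gives $\Lambda_{a/q}(s,f)=(2\pi/q)^{(\kappa-1)/2}\int_0^\infty f(a/q+iy/q)\,y^{s+(\kappa-1)/2-1}\,dy$, not $q^{(\kappa-1)/2}$ times the integral; as you anticipate, this is exactly the sort of bookkeeping absorbed in the final matching. Second, to justify the contour shift you need $\tilde h(s)$ to decay faster than any polynomial on vertical lines, which requires $h\in C_c^\infty(0,\infty)$ rather than merely $C_c(0,\infty)$; the general continuous case then follows by approximation, since both sides of the identity are continuous in $h$ for the sup-norm topology on a fixed compact support.
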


The function $\Phi_h(x)$ has the following asymptotic expansion
when $x\gg 1$ (see \cite{LS}, Lemma 3.2).

\begin{lemma}\label{voronoiGL2-holomorphic-asymptotic}
For any fixed integer $J\geq 1$ and $x\gg 1$, we have
\bna
\Phi_h(x)=x^{-1/4} \int_0^\infty h(y)y^{-1/4}
\sum_{j=0}^{J}
\frac{c_{j} e(2 \sqrt{xy})+d_{j} e(-2 \sqrt{xy})}
{(xy)^{j/2}}\mathrm{d}y
+O_{\kappa,J}\left(x^{-J/2-3/4}\right),
 \ena
where $c_{j}$ and $d_{j}$ are constants depending on $\kappa$.
\end{lemma}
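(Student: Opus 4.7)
The plan is to reduce the statement to the classical large-argument asymptotic expansion of the $J$-Bessel function. Namely, I would use the Hankel-type expansion
\[
J_{\nu}(z) = \frac{1}{\sqrt{z}}\sum_{j=0}^{J}\bigl(\alpha_j(\nu)\, e^{iz} + \beta_j(\nu)\, e^{-iz}\bigr) z^{-j} + O_{\nu,J}\bigl(z^{-J-3/2}\bigr), \qquad z \gg 1,
\]
which follows from writing $J_\nu = \tfrac{1}{2}(H^{(1)}_\nu + H^{(2)}_\nu)$ and inserting the standard asymptotics for the Hankel functions; the coefficients $\alpha_j(\nu), \beta_j(\nu)$ depend only on $\nu$ and $j$. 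This is classical and can be found, e.g., in Watson's treatise on Bessel functions.

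First I would note that since $h \in \mathcal{C}_c(0,\infty)$, the variable $y$ in the integral defining $\Phi_h(x)$ ranges over a fixed compact subinterval $[a,b] \subset (0,\infty)$. Combined with the assumption $x \gg 1$, this guarantees that the argument $z = 4\pi\sqrt{xy}$ satisfies $z \geq 4\pi\sqrt{ax} \gg 1$ uniformly for $y \in \mathrm{supp}(h)$, so the above asymptotic may be applied pointwise inside the integral.

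Next I would substitute $z = 4\pi\sqrt{xy}$ with $\nu = \kappa-1$ and simplify: $e^{\pm i z} = e(\pm 2\sqrt{xy})$, and $z^{-j-1/2} = (4\pi)^{-j-1/2} (xy)^{-j/2-1/4}$. Pulling the common factor $x^{-1/4}$ outside and absorbing the numerical constants (together with the prefactor $2\pi i^{\kappa}$ in the definition of $\Phi_h$) into new constants $c_j = c_j(\kappa)$ and $d_j = d_j(\kappa)$, the $j$-th term of the expansion contributes exactly
\[
x^{-1/4}\int_0^{\infty} h(y)\, y^{-1/4}\,\frac{c_j\, e(2\sqrt{xy}) + d_j\, e(-2\sqrt{xy})}{(xy)^{j/2}}\, \mathrm{d}y,
\]
which is precisely the form claimed.

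For the error term, the Hankel remainder contributes at most
\[
\int_0^{\infty} |h(y)|\, (4\pi\sqrt{xy})^{-J-3/2}\, \mathrm{d}y \ll_{\kappa,J,h} x^{-J/2-3/4},
\]
since $h$ is compactly supported away from $0$. This yields the stated $O_{\kappa,J}(x^{-J/2-3/4})$ bound. The only mild subtlety is bookkeeping the numerical constants so that the two oscillatory factors $e(\pm 2\sqrt{xy})$ appear cleanly with coefficients $c_j, d_j$ independent of $y$ and $x$; this is routine given the uniformity in $y$ of the Bessel asymptotic on a compact set. I do not anticipate any genuine obstacle beyond this accounting.
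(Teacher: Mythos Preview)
Your proposal is correct and is exactly the standard argument. The paper does not supply its own proof of this lemma but simply cites \cite{LS}, Lemma~3.2; the proof there proceeds precisely by inserting the large-argument Hankel expansion of $J_{\kappa-1}$ into the defining integral and reading off the main terms and remainder, just as you outline.
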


\subsection{Maass cusp forms for $\mathrm{GL}_2$}

Let $f$ be a Hecke-Maass cusp form for $\rm SL_2(\mathbb{Z})$
with Laplace eigenvalue $1/4+\mu^2$. Then $f$ has a Fourier expansion
$$
f(z)=\sqrt{y}\sum_{n\neq 0}\lambda_f(n)K_{i\mu}(2\pi |n|y)e(nx),
$$
where $K_{i\mu}$ is the modified Bessel function of the third kind.
The Fourier coefficients satisfy
\bea\label{individual bound}
\lambda_f(n)\ll n^{\vartheta},
\eea
where, here and throughout the paper, $\theta$ denotes the exponent towards the Ramanujan
conjecture for $\rm GL_2$ Maass forms. The Ramanujan conjecture states that $\vartheta=0$ and
the current record due to Kim and Sarnak \cite{KS} is $\vartheta=7/64$.
We also need the following
average bound (see for instance \cite[Lemma 1]{Murty})
\bea\label{GL2: Rankin Selberg}
\sum_{n\leq X}|\lambda_f(n)|^2= c_{f} X+O\big(X^{3/5}\big).
\eea

For $h(x)\in \mathcal{C}_c^{\infty}(0,\infty)$, we define the integral transforms
\bea\label{intgeral transform-2}
\begin{split}
\Phi_h^+(x) =& \frac{-\pi}{\sin(\pi i\mu)} \int_0^\infty h(y)\left(J_{2i\mu}(4\pi\sqrt{xy})
- J_{-2i\mu}(4\pi\sqrt{xy})\right) \mathrm{d}y,\\
\Phi_h^-(x) =& 4\varepsilon_f\cosh(\pi \mu)\int_0^\infty h(y)K_{2i\mu}(4\pi\sqrt{xy}) \mathrm{d}y,
\end{split}\eea
where $\varepsilon_f$ is an eigenvalue under the reflection operator.
We have the following Voronoi summation formula (see \cite[Theorem A.4]{KMV}).

\begin{lemma}\label{voronoiGL2-Maass}
Let $q\in \mathbb{N}$ and $a\in \mathbb{Z}$ be such
that $(a,q)=1$. For $X>0$, we have
\bna\label{voronoi for Maass form}
\sum_{n=1}^{\infty}\lambda_f(n)e\left(\frac{an}{q}\right)h\left(\frac{n}{X}\right)
= \frac{X}{q} \sum_{\pm}\sum_{n=1}^{\infty}\lambda_f(n)
e\left(\mp\frac{\overline{a}n}{q}\right)\Phi_h^{\pm}\left(\frac{nX}{q^2}\right),
\ena
where $\overline{a}$ denotes
the multiplicative inverse of $a$ modulo $q$.
\end{lemma}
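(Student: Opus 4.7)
The plan is to derive this Voronoi formula the classical way, via Mellin inversion and the functional equation of the additively twisted $L$-function attached to $f$. First, since $h\in\mathcal{C}_c^\infty(0,\infty)$, its Mellin transform $\tilde h(s)=\int_0^\infty h(y)y^{s-1}\mathrm{d}y$ is entire and decays rapidly in vertical strips. By Mellin inversion,
\begin{equation*}
\sum_{n=1}^\infty \lambda_f(n)e\!\left(\frac{an}{q}\right) h\!\left(\frac{n}{X}\right)
= \frac{1}{2\pi i}\int_{(2)} \tilde h(s)\, X^s\, L_{a/q}(s,f)\,\mathrm{d}s,
\end{equation*}
where $L_{a/q}(s,f)=\sum_{n\geq 1}\lambda_f(n)e(an/q)n^{-s}$ converges absolutely for $\Re s>1$. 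The interchange of sum and integral is justified by the polynomial growth of $\lambda_f(n)$ together with the rapid decay of $\tilde h$.

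Next I would invoke the functional equation for $L_{a/q}(s,f)$. This can be obtained either by decomposing the additive character into Dirichlet characters mod $q$ via Gauss sums and applying the standard functional equations of $L(s,f\otimes\chi)$, or by applying the automorphy of $f$ directly on the upper half plane (the $\omega_q$-involution argument). Either way one arrives at a relation of the shape
\begin{equation*}
L_{a/q}(s,f) \;=\; q^{\,2s-1}\,G^{\pm}(s;\mu)\,L_{\mp\bar a/q}(1-s,f),
\end{equation*}
where the archimedean factor $G^{\pm}(s;\mu)$ is a quotient of Gamma functions at $s\pm i\mu$ and $(1-s)\pm i\mu$, and the $\pm$ sign reflects the parity/reflection eigenvalue $\varepsilon_f$.

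Then I would shift the contour to $\Re s=-\varepsilon$ (no poles are crossed because $f$ is cuspidal and the twisted $L$-function is entire), apply the functional equation, and change variables $s\mapsto 1-s$. After swapping sum and integral back, the term with index $n$ on the dual side is multiplied by the Mellin--Barnes integral
\begin{equation*}
\frac{1}{2\pi i}\int_{(\sigma)}\tilde h(s)\left(\frac{nX}{q^2}\right)^{1-s}G^{\pm}(s;\mu)\,\frac{\mathrm{d}s}{nX/q^2},
\end{equation*}
which I would need to identify as $\Phi_h^{\pm}(nX/q^2)/(nX/q^2)$ times a constant. Using the Mellin--Barnes representations
\begin{equation*}
\int_0^\infty J_{2i\mu}(4\pi\sqrt{xy})\,y^{s-1}\mathrm{d}y \;\text{ and }\; \int_0^\infty K_{2i\mu}(4\pi\sqrt{xy})\,y^{s-1}\mathrm{d}y,
\end{equation*}
one matches the $+$ case with the difference $J_{2i\mu}-J_{-2i\mu}$ (using the reflection formula $\Gamma(s)\Gamma(1-s)=\pi/\sin\pi s$ to produce the $-\pi/\sin(\pi i\mu)$ prefactor), and the $-$ case with $4\varepsilon_f\cosh(\pi\mu)K_{2i\mu}$, matching exactly the definitions in \eqref{intgeral transform-2}.

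The main obstacle is keeping careful track of the archimedean gamma factors and the $\pm$ signs so that they assemble into precisely the two kernels $\Phi_h^\pm$ prescribed above, in particular producing the sign-dependent twist $\mp \bar a/q$ on the dual side. The divisibility/coprimality condition $(a,q)=1$ enters only through the identification $\bar a\pmod q$, and the fact that $h$ is compactly supported away from $0$ ensures absolute convergence throughout and makes every contour shift and interchange legitimate.
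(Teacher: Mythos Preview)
The paper does not actually prove this lemma; it simply cites \cite[Theorem A.4]{KMV} and states the result. So there is no ``paper's own proof'' to compare against here.

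Your outline is the standard route (and essentially the one taken in \cite{KMV}): Mellin inversion, the functional equation of the additively twisted $L$-series $L_{a/q}(s,f)$, a contour shift, and identification of the resulting Mellin--Barnes integrals with the Bessel transforms $\Phi_h^{\pm}$. The structure is correct and all the analytic justifications you flag (entirety of the twisted $L$-function by cuspidality, rapid decay of $\tilde h$ for contour shifts and interchanges) are the right ones. The only place requiring genuine care, as you note, is matching the archimedean gamma quotients to the specific kernels \eqref{intgeral transform-2}; this is a bookkeeping exercise with the Mellin transforms of $J_\nu$ and $K_\nu$ (e.g.\ via Gradshteyn--Ryzhik or Watson), together with the parity $\varepsilon_f$ determining how the $\pm$ pieces separate. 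Since the paper treats this as a quoted black box, your sketch is already more detailed than what the paper provides.
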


For $x\gg 1$, we have (see (3.8) in \cite{LS})
\bea\label{The $-$ case}
\Phi_h^-(x)\ll_{\mu,A}x^{-A}.
\eea
For $\Phi_h^+(x)$ and $x\gg 1$, we have a similar asymptotic formula as for
$\Phi_h(x)$ in the holomorphic case (see \cite{LS}, Lemma 3.4).
\begin{lemma}\label{voronoiGL2-Maass-asymptotic}
For any fixed integer $J\geq 1$ and $x\gg 1$, we have
\bna
\Phi_h^{+}(x)=x^{-1/4} \int_0^\infty h(y)y^{-1/4}
\sum_{j=0}^{J}
\frac{c_{j} e(2 \sqrt{xy})+d_{j} e(-2 \sqrt{xy})}
{(xy)^{j/2}}\mathrm{d}y
+O_{\mu,J}\left(x^{-J/2-3/4}\right),
 \ena
where $c_{j}$ and $d_{j}$ are some constants depending on $\mu$.
\end{lemma}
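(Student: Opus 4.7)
The plan is to insert the classical Hankel asymptotic expansion of the $J$-Bessel function directly into the integral representation of $\Phi_h^+$ in \eqref{intgeral transform-2} and simplify. For fixed order $\nu\in\mathbb{C}$ and large argument $z$, one has
\[
J_\nu(z) = \sqrt{\tfrac{2}{\pi z}}\Bigl[\cos(z-\tfrac{\nu\pi}{2}-\tfrac{\pi}{4})\, P_J(\nu,1/z) - \sin(z-\tfrac{\nu\pi}{2}-\tfrac{\pi}{4})\, Q_J(\nu,1/z)\Bigr] + O_{\nu,J}(z^{-J-3/2}),
\]
where $P_J, Q_J$ are polynomials with explicit rational coefficients depending on $\nu$. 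I would apply this with $\nu=\pm 2i\mu$ and $z=4\pi\sqrt{xy}$, which is legitimate uniformly for $y$ in the compact support of $h$ once $x\gg 1$.

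The key algebraic step is to evaluate the difference $J_{2i\mu}(z)-J_{-2i\mu}(z)$. Writing each cosine and sine in exponential form, the $\mu$-dependent phase shifts $e^{\mp i\nu\pi/2} = e^{\pm\pi\mu}$ combine so that the difference carries an overall factor $\sinh(\pi\mu)$ at every order $j$. Since $\sin(\pi i\mu) = i\sinh(\pi\mu)$, this factor cancels exactly against the denominator in the prefactor $-\pi/\sin(\pi i\mu)$ of \eqref{intgeral transform-2}, leaving coefficients that are polynomial (rather than exponential) in $\mu$. The two exponentials $e^{\pm i\cdot 4\pi\sqrt{xy}} = e(\pm 2\sqrt{xy})$ produce the oscillatory factors in the statement, while $\sqrt{2/(\pi z)}$ contributes $(xy)^{-1/4}$ (together with $x^{-1/4}$ pulled outside and $y^{-1/4}$ left in the integrand). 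Successive powers of $1/z = 1/(4\pi\sqrt{xy})$ in $P_J$ and $Q_J$ then supply the $(xy)^{-j/2}$ decay, defining the constants $c_j, d_j$ depending only on $\mu$.

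For the error term, I would truncate the Hankel expansion at $j=J$; since $z=4\pi\sqrt{xy}\gg 1$ uniformly on the support of $h$, the remainder is $O_{\mu,J}((xy)^{-J/2-3/4})$ pointwise, and integrating against the compactly supported $h$ yields the claimed $O_{\mu,J}(x^{-J/2-3/4})$. The main obstacle is bookkeeping: one must verify at each order $j$ that every combination of Hankel coefficients at $\pm 2i\mu$ genuinely carries a factor of $\sinh(\pi\mu)$, so that no exponentially growing $\mu$-dependence survives in $c_j$ and $d_j$, and that the resulting constants are independent of $y$. Since the argument is entirely parallel to the proof of Lemma \ref{voronoiGL2-holomorphic-asymptotic} in the holomorphic setting, with the sole structural novelty being this $\sinh(\pi\mu)$ cancellation, it would most naturally appear in the paper as a citation of \cite{LS} rather than a fully detailed calculation.
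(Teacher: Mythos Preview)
Your proposal is correct and exactly matches what the paper does: it gives no proof at all, simply citing \cite[Lemma 3.4]{LS} for the result, as you anticipated in your final sentence. The underlying argument is indeed the Hankel asymptotic expansion inserted into \eqref{intgeral transform-2}, with the $\sinh(\pi\mu)$ cancellation you describe (note that the Hankel coefficients depend only on $\nu^2$, so $a_k(2i\mu)=a_k(-2i\mu)$, which makes the factorization of $\sinh(\pi\mu)$ at every order immediate).
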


\begin{remark}\label{decay-of-largeX}
For $x\gg X^{\varepsilon}$, we can choose $J$ sufficiently large so that
the contribution from the $O$-terms in Lemmas \ref{voronoiGL2-holomorphic-asymptotic} and
\ref{voronoiGL2-Maass-asymptotic}
is negligible. For the main terms
we only need to analyze the leading term $j=1$, as the analysis of the remaining
lower order terms is the same and their contribution is smaller
compared to that of the leading term.
\end{remark}

\subsection{Estimates for exponential integrals}

Let
\begin{equation*}
 I = \int_{\mathbb{R}} w(y) e^{i \varrho(y)} dy.
\end{equation*}
Firstly, we have the following estimates for exponential integrals
(see \cite[Lemma 8.1]{BKY}  and \cite[Lemma A.1]{AHLQ}).
	
	\begin{lemma}\label{lem: upper bound}
		Let $w(x)$ be a smooth function    supported on $[ a, b]$ and
        $\varrho(x)$ be a real smooth function on  $[a, b]$. Suppose that there
		are   parameters $Q, U,   Y, Z,  R > 0$ such that
		\begin{align*}
		\varrho^{(i)} (x) \ll_i Y / Q^{i}, \qquad w^{(j)} (x) \ll_{j } Z / U^{j},
		\end{align*}
		for  $i \geqslant 2$ and $j \geqslant 0$, and
		\begin{align*}
		| \varrho' (x) | \geqslant R.
		\end{align*}
		Then for any $A \geqslant 0$ we have
		\begin{align*}
		I \ll_{ A} (b - a)
Z \bigg( \frac {Y} {R^2Q^2} + \frac 1 {RQ} + \frac 1 {RU} \bigg)^A .
		\end{align*}
			\end{lemma}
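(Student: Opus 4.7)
The plan is repeated integration by parts. Define the first-order differential operator
$\mathcal{D}^*(\phi) := -\frac{d}{dy}\!\left(\frac{\phi}{i\varrho'}\right) = -\frac{\phi'}{i\varrho'} + \frac{\phi\,\varrho''}{i(\varrho')^2}.$
Since $(e^{i\varrho(y)})' = i\varrho'(y)\,e^{i\varrho(y)}$ and $w$ vanishes to all orders at the endpoints of $[a,b]$, one integration by parts gives $\int_a^b \phi\,e^{i\varrho}\,dy = \int_a^b \mathcal{D}^*(\phi)\,e^{i\varrho}\,dy$ with no boundary contribution. Iterating $A$ times,
$I = \int_a^b (\mathcal{D}^*)^A(w)\,e^{i\varrho(y)}\,dy, \qquad |I| \leq (b-a)\sup_{y\in[a,b]}\bigl|(\mathcal{D}^*)^A(w)(y)\bigr|.$
The lemma therefore reduces to the pointwise estimate $\bigl|(\mathcal{D}^*)^A(w)\bigr|\ll_A Z\bigl(Y/(R^2Q^2)+1/(RQ)+1/(RU)\bigr)^A$.

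I would prove this pointwise bound by induction on the integer $A\geq 1$ (the case $A=0$ being the trivial $|I|\leq(b-a)Z$). For $A=1$, the explicit formula for $\mathcal{D}^*(w)$ together with the hypotheses $|w'|\ll Z/U$, $|\varrho''|\ll Y/Q^2$, and $|\varrho'|\geq R$ gives $|\mathcal{D}^*(w)|\ll Z/(RU) + ZY/(R^2Q^2)$, which is dominated by the claimed factor. For the induction step, one observes that $(\mathcal{D}^*)^A(w)$ expands, via the product and quotient rules, into an $O_A(1)$-term linear combination of monomials of the shape $w^{(a_0)}\varrho^{(b_1)}\cdots\varrho^{(b_r)}/(\varrho')^{c}$ with each $b_j\geq 2$. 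Each subsequent application of $\mathcal{D}^*$ introduces one extra derivative (on $w$, on some $\varrho^{(b_j)}$, or on $1/\varrho'$) together with one extra factor of $1/(i\varrho')$. Invoking the hypothesized derivative bounds, each newly produced monomial is majorized by its parent times one of $1/(RU)$ (if the new derivative hits $w$), $Y/(R^2Q^2)$ (if it hits $\varrho^{(j)}$ or $1/\varrho'$, using $|\varrho'|\geq R$), or $1/(RQ)$ (a convenient intermediate cross-bound). Summing over all $O_A(1)$ monomials yields the claimed estimate.

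The main technical obstacle is precisely this combinatorial bookkeeping: enumerating the monomials in $(\mathcal{D}^*)^A(w)$ and verifying that each of them satisfies the inductive bound. Once this pointwise estimate is in hand, the bound on $I$ is immediate from the trivial majorization by $(b-a)\sup|(\mathcal{D}^*)^A(w)|$. I note that the form of the stated bound is slightly relaxed compared to the sharper shape $Z(Y/(R^2Q^2)+1/(RU))^A$ that typically emerges from the cleanest induction; the extra $1/(RQ)$ term only enlarges the inner sum and is included for flexibility in the later applications of the lemma.
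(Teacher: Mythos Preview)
Your approach is correct and is the standard one. The paper itself does not supply a proof of this lemma: it simply cites \cite[Lemma~8.1]{BKY} and \cite[Lemma~A.1]{AHLQ}. What you have written is essentially the argument in those references---repeated integration by parts via the adjoint operator $\mathcal{D}^*(\phi) = -(\phi/(i\varrho'))'$, followed by a pointwise bound on $(\mathcal{D}^*)^A(w)$ obtained by tracking the monomials that arise.

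One small slip in your bookkeeping sketch: when the derivative lands on a factor $\varrho^{(b_j)}$ with $b_j \geq 2$, the ratio to the parent monomial is $(\varrho^{(b_j+1)}/\varrho^{(b_j)}) \cdot (1/\varrho')$, which is bounded by $1/(RQ)$, not $Y/(R^2Q^2)$. The factor $Y/(R^2Q^2)$ arises only when the derivative lands on a power of $1/\varrho'$ (producing an extra $\varrho''/(\varrho')^2$), or from the second summand of $\mathcal{D}^*$ itself. So the $1/(RQ)$ term is not merely a ``convenient intermediate cross-bound'' but genuinely appears, and your trichotomy should be: hitting $w^{(a_0)}$ gives $1/(RU)$; hitting some $\varrho^{(b_j)}$ gives $1/(RQ)$; hitting $1/(\varrho')^c$ (or the built-in second term of $\mathcal{D}^*$) gives $Y/(R^2Q^2)$. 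With that correction the induction goes through cleanly, and your final remark that the sharper bound $Z(Y/(R^2Q^2)+1/(RU))^A$ also holds is not quite right---the $1/(RQ)$ is needed once higher $\varrho$-derivatives enter.
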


Next, we need the following evaluation for exponential integrals
which are
 Lemma 8.1 and Proposition 8.2 of \cite{BKY} in the language of inert functions
 (see \cite[Lemma 3.1]{KPY}).

Let $\mathcal{F}$ be an index set, $Y: \mathcal{F}\rightarrow\mathbb{R}_{\geq 1}$ and under this map
$T\mapsto Y_T$
be a function of $T \in \mathcal{F}$.
A family $\{w_T\}_{T\in \mathcal{F}}$ of smooth
functions supported on a product of dyadic intervals in $\mathbb{R}_{>0}^d$
is called $Y$-inert if for each $j=(j_1,\ldots,j_d) \in \mathbb{Z}_{\geq 0}^d$
we have
\bna
C(j_1,\ldots,j_d)
= \sup_{T \in \mathcal{F} } \sup_{(y_1, \ldots, y_d) \in \mathbb{R}_{>0}^d}
Y_T^{-j_1- \cdots -j_d}\left| y_1^{j_1} \cdots y_d^{j_d}
w_T^{(j_1,\ldots,j_d)}(y_1,\ldots,y_d) \right| < \infty.
\ena

\begin{lemma}
\label{lemma:exponentialintegral}
 Suppose that $w = w_T(y)$ is a family of $Y$-inert functions,
 with compact support on $[Z, 2Z]$, so that
$w^{(j)}(y) \ll (Z/Y)^{-j}$.  Also suppose that $\varrho$ is
smooth and satisfies $\varrho^{(j)}(y) \ll H/Z^j$ for some
$H/X^2 \geq R \geq 1$ and all $y$ in the support of $w$.
\begin{enumerate}
 \item
 If $|\varrho'(y)| \gg H/Z$ for all $y$ in the support of $w$, then
 $I \ll_A Z R^{-A}$ for $A$ arbitrarily large.
 \item If $\varrho''(y) \gg H/Z^2$ for all $y$ in the support of $w$,
 and there exists $y_0 \in \mathbb{R}$ such that $\varrho'(y_0) = 0$ (note $y_0$ is
 necessarily unique), then
 \begin{equation}
  I = \frac{e^{i \varrho(y_0)}}{\sqrt{\varrho''(y_0)}}
 F(y_0) + O_{A}(  Z R^{-A}),
 \end{equation}
where $F(y_0)$ is an $Y$-inert function (depending on $A$)  supported
on $y_0 \asymp Z$.
\end{enumerate}
\end{lemma}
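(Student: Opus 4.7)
My plan is to handle the two parts separately, following the standard inert stationary-phase strategy of Blomer--Khan--Young and Kiral--Petrow--Young. For part (1), I would iterate integration by parts against the operator $L^\dagger f := (f/(i\varrho'))'$. The compact support of $w$ kills boundary terms, so for each $A \geq 0$,
\bna
I = \int w(y) e^{i\varrho(y)}\, dy = (-1)^A \int (L^\dagger)^A(w)(y)\, e^{i\varrho(y)}\, dy.
\ena
A single application splits into $w'/(i\varrho')$ and $-w\varrho''/(i(\varrho')^2)$; the $Y$-inertness $w^{(j)} \ll (Y/Z)^j$, the bounds $\varrho^{(j)} \ll H/Z^j$, and the non-stationary hypothesis $|\varrho'| \gg H/Z$ together show each term is of size $\ll Y/H \ll 1/R$ (using $H \geq RY^2$ with $Y \geq 1$, which is the natural reading of $H/Y^2 \geq R$ in the statement). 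An induction on $A$ then gives $|(L^\dagger)^A(w)(y)| \ll R^{-A}$ on an interval of length $\asymp Z$, so $I \ll Z R^{-A}$.

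For part (2), my plan is to reduce to a pure Gaussian integral via Morse's lemma and then invoke the Fresnel formula. Writing $\sigma = \mathrm{sign}(\varrho''(y_0))\in\{\pm 1\}$ and using $|\varrho''(y_0)| \asymp H/Z^2$, I would produce a smooth bijection $y = \varphi(u)$ with $\varphi(0) = y_0$, $\varphi'(0) = |\varrho''(y_0)|^{-1/2}$, and
\bna
\varrho(\varphi(u)) = \varrho(y_0) + \tfrac{\sigma}{2} u^2.
\ena
Setting $\widetilde{w}(u) := w(\varphi(u))\varphi'(u)$, the integral collapses to
\bna
I = e^{i\varrho(y_0)}\int \widetilde{w}(u)\, e^{i\sigma u^2/2}\, du.
\ena
Using Plancherel against the Fresnel transform $\widehat{e^{i\sigma u^2/2}}(\xi) = \sqrt{2\pi}\, e^{i\sigma\pi/4}\, e^{-i\sigma\xi^2/2}$, together with the rapid decay of $\widehat{\widetilde{w}}$, I extract the leading contribution $\sqrt{2\pi}\, e^{i\sigma\pi/4}\,\widetilde{w}(0)/\sqrt{|\varrho''(y_0)|}$; the tail away from $u=0$ is isolated by a smooth cutoff at scale $Z/\sqrt{H}$, and the complement region is handled by part (1). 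This yields the claimed asymptotic with $F(y_0) = \sqrt{2\pi}\, e^{i\sigma\pi/4}\, w(y_0)$ plus lower-order inert corrections.

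The hard part will be verifying that $F(y_0)$ is genuinely $Y$-inert in the original variable. The Morse substitution mixes derivatives of $w$ and $\varrho$ nonlinearly, and differentiating the defining identity $\varrho(\varphi(u)) = \varrho(y_0) + \sigma u^2/2$ repeatedly produces a triangular recursion expressing $\varphi^{(j)}$ in terms of the $\varrho^{(k)}$; a Fa\`a di Bruno expansion is then required to check that each chain-rule contribution to $\widetilde{w}^{(j)}$, and hence to the Fresnel expansion of $F$, satisfies the correct scaling. Once this combinatorial bookkeeping is in place, the claimed structure of $F$ as a $Y$-inert amplitude localized at $y_0\asymp Z$ follows, and the error $O_A(Z R^{-A})$ drops out uniformly from the integration-by-parts bound applied on the cutoff tails.
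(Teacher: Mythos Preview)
The paper does not prove this lemma at all; it simply quotes it as Lemma~8.1 and Proposition~8.2 of Blomer--Khan--Young \cite{BKY}, reformulated in the inert-function language of Kiral--Petrow--Young \cite{KPY}. So there is no in-paper argument to compare against, and your sketch is exactly the kind of proof those references contain. Your part~(1) via iterated integration by parts with $L^\dagger f=(f/(i\varrho'))'$ is precisely the standard mechanism (and you correctly read the typo ``$H/X^2\geq R$'' in the statement as ``$H/Y^2\geq R$'', which is how the lemma is actually applied later in the paper).

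For part~(2) your route via Morse's lemma is a legitimate alternative to the BKY/KPY argument, which instead localizes near $y_0$ with a smooth cutoff at scale $Z/\sqrt{H}$, Taylor-expands the phase about $y_0$ keeping the quadratic part in the exponential and expanding the higher-order remainder as a series, and then evaluates the resulting Gaussian moments. That approach avoids the change of variables entirely, so the inertness of $F$ is read off directly from the explicit series in $w^{(j)}(y_0)$ and $\varrho^{(j)}(y_0)$ without the Fa\`a di Bruno bookkeeping you flag as the hard step. Your Morse-lemma reduction buys a cleaner conceptual picture (pure Fresnel integral) at the cost of that combinatorial verification; either way the outcome and error term are the same.
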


We also need the
second derivative test (see \cite[Lemma 5.1.3]{Hux2}).
	
\begin{lemma}\label{lem: 2st derivative test, dim 1}
Let $\varrho(x)$ be real and twice
differentiable on the open interval $[a, b]$
with $ \varrho'' (x) \gg \lambda_0>0$  on $[a, b]$. Let $w(x)$
be real on $[ a, b]$ and let $V_0$ be its total
variation on $[ a, b]$ plus the maximum modulus of $w(x)$ on $[ a, b]$.
Then
		\begin{align*}
	I\ll \frac {V_0} {\sqrt{\lambda_0}}.
		\end{align*}
	\end{lemma}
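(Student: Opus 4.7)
The plan is to prove this classical second-derivative test via the standard van der Corput strategy: first establish the antiderivative bound
\[
\Psi(x) := \int_a^x e^{i\varrho(t)}\, dt \ll \lambda_0^{-1/2}\quad \text{uniformly for } x\in[a,b],
\]
and then pass to the weighted integral $I$ by Riemann--Stieltjes integration by parts against $w$.

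Since $\varrho''(t)\gg \lambda_0>0$, the derivative $\varrho'$ is strictly increasing and hence has at most one zero $t_0\in[a,b]$; if no such zero exists I take $t_0$ to be whichever endpoint of $[a,b]$ minimizes $|\varrho'|$. The mean value theorem then gives $|\varrho'(t)|\gg \lambda_0\,|t-t_0|$. I would split the integration range into the \emph{resonant window} $|t-t_0|\le \lambda_0^{-1/2}$, of length at most $2\lambda_0^{-1/2}$, and its complementary \emph{non-resonant pieces}, on which $|\varrho'(t)|\gg \lambda_0^{1/2}$. The resonant window contributes at most $O(\lambda_0^{-1/2})$ by the trivial bound. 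On each non-resonant piece, $\varrho'$ has constant sign and is monotone (because $\varrho''$ has constant sign), so $1/\varrho'$ is also monotone; writing $e^{i\varrho}=(i\varrho')^{-1}\tfrac{d}{dt}e^{i\varrho}$ and integrating by parts once then gives
\[
\int e^{i\varrho(t)}\,dt = \Bigl[\frac{e^{i\varrho(t)}}{i\varrho'(t)}\Bigr] + \int \frac{\varrho''(t)}{i\varrho'(t)^2}e^{i\varrho(t)}\,dt.
\]
The boundary term is $\ll \lambda_0^{-1/2}$; the remaining integral is also $\ll \lambda_0^{-1/2}$, since $\varrho''$ has constant sign implies $\varrho''/\varrho'^2 = -(1/\varrho')'$, so it integrates telescopically to a boundary expression again bounded by $\lambda_0^{-1/2}$. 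Summing the contributions from the resonant and non-resonant parts yields $|\Psi(x)|\ll \lambda_0^{-1/2}$ uniformly in $x\in[a,b]$.

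With this uniform bound in hand, the lemma follows by Riemann--Stieltjes integration by parts,
\[
I = \int_a^b w(t)\, d\Psi(t) = w(b)\Psi(b) - \int_a^b \Psi(t)\, dw(t),
\]
so the boundary term contributes $\ll \max|w|\cdot\lambda_0^{-1/2}$ while the remaining integral is bounded by $\lambda_0^{-1/2}$ times the total variation of $w$ on $[a,b]$. Adding these gives $I\ll V_0\,\lambda_0^{-1/2}$, exactly as claimed. The main step is the uniform estimate for $\Psi(x)$ near the critical point $t_0$, and in particular the telescoping argument that controls the non-resonant pieces with a single integration by parts rather than two; the fact that $w$ is only of bounded variation forces the final step to be phrased in Riemann--Stieltjes rather than smooth integration-by-parts form, but this introduces no essential new difficulty.
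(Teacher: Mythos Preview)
Your proof is correct and is the standard van der Corput second-derivative-test argument: bound the unweighted antiderivative by splitting at the stationary point and using the first-derivative (telescoping) estimate on the monotone tails, then recover the weighted integral by Riemann--Stieltjes integration by parts against $w$. The paper does not supply its own proof of this lemma but simply quotes it from \cite[Lemma 5.1.3]{Hux2}; your argument is essentially the one found there (and in Titchmarsh, Graham--Kolesnik, etc.), so there is nothing to compare.
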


\section{Proof of the main theorem}\label{details-of-proof}

In this section, we provide the details of the proof for Theorem \ref{main-theorem}.
Recall
\bea\label{main sum}
S(X,t)=\sum_{n=1}^{\infty}\lambda_f(n)
\lambda_g(n)e\left(t \varphi\left(\frac{n}{X}\right)\right)V\left(\frac{n}{X}\right),
\eea
where
$V(x)\in \mathcal{C}_c^{\infty}(1,2)$ with total variation $\text{Var}(V)\ll 1$ and
satisfying \eqref{derivative-of-V} that
$V^{(j)}(x)\ll_j \triangle^j$ for any integer $j\geq 0$ with $\triangle\ll t^{1/2-\varepsilon}$.
Without loss of generality, we assume that the function $\varphi$ satisfies
\bea\label{first-derivative-varphi}
\varphi'(x)>0, \qquad \varphi''(x)\gg 1.
\eea
(The case $\varphi'(x)<0$ can be analyzed analogously.)

\subsection{Applying DFI's $\delta$-method}

Define $\delta: \mathbb{Z}\rightarrow \{0,1\}$ with
$\delta(0)=1$ and $\delta(n)=0$ for $n\neq 0$.
As in \cite{LS}, we will use a version of the circle method of
Duke, Friedlander and Iwaniec (see \cite[Chapter 20]{IK})
which states that for any $n\in \mathbb{Z}$ and $Q\in \mathbb{R}^+$, we have
\bea\label{DFI's}
\delta(n)=\frac{1}{Q}\sum_{q\sim Q} \;\frac{1}{q}\;
\sideset{}{^\star}\sum_{a\bmod{q}}e\left(\frac{na}{q}\right)
\int_\mathbb{R}g(q,\zeta) e\left(\frac{n\zeta}{qQ}\right)\mathrm{d}\zeta
\eea
where the $\star$ on the sum indicates
that the sum over $a$ is restricted to $(a,q)=1$.
The function $g$ has the following properties (see (20.158) and (20.159)
of \cite{IK} and Lemma 15 of \cite{HB})
\bea\label{g-h}
g(q,\zeta)\ll |\zeta|^{-A}, \;\;\;\;\;\; g(q,\zeta) =1+h(q,\zeta)\;\;\text{with}\;\;h(q,\zeta)=
O\left(\frac{Q}{q}\left(\frac{q}{Q}+|\zeta|\right)^A\right)
\eea
for any $A>1$ and
\bea\label{rapid decay g}
\zeta^j\frac{\partial^j}{\partial \zeta^j}g(q,\zeta)\ll (\log Q)\min\left\{\frac{Q}{q},\frac{1}{|\zeta|}\right\}, \qquad j\geq 1.
\eea
In particular the first property in \eqref{g-h} implies that
the effective range of the integration in
\eqref{DFI's} is $[-X^\varepsilon, X^\varepsilon]$.

We write \eqref{main sum} as
\bna
S(X,t)=\sum_{n=1}^{\infty}\lambda_{f}(n)
U\left(\frac{n}{X}\right)\sum_{m=1}^{\infty}
\lambda_g(m)e\left(t \varphi\left(\frac{m}{X}\right)\right)
V\left(\frac{m}{X}\right)\delta(m-n),
\ena
where $U(x)\in \mathcal{C}_c^{\infty}(1/2,5/2)$ satisfying $U(x)=1$
for $x\in [1,2]$ and $U^{(j)}(x)\ll_j 1$ for any integer $j\geq 0$.
Plugging the identity \eqref{DFI's} for
$\delta(m-n)$ in and
exchanging the order of integration and summations,
we get
\bna
S(X,t)&=&\frac{1}{Q}
\int_{\mathbb{R}} \sum_{q\sim Q}\frac{g(q,\zeta)}{q}\;
\sideset{}{^\star}\sum_{a\bmod{q}}
\left\{\sum_{n=1}^{\infty}\lambda_{f}(n)
e\left(-\frac{na}{q}\right)U\left(\frac{n}{X}\right)
e\left(-\frac{n\zeta}{qQ}\right)\right\}\nonumber\\
&& \left\{\sum_{m=1}^{\infty}\lambda_{g}(m)e\left(\frac{ma}{q}\right)
V\left(\frac{m}{X}\right)
e\left(t \varphi\left(\frac{m}{X}\right)+\frac{m\zeta}{qQ}\right)\right\}\mathrm{d}\zeta.
\ena
Note that the contribution from $|\zeta|\leq X^{-B}$ is negligible for $B>0$ sufficiently large.
Moreover, by the first property in \eqref{g-h}, we can restrict $\zeta$ in the range
$|\zeta|\leq X^{\varepsilon}$ up to an negligible error. So we can
insert a smooth partition of unity for the $\zeta$-integral and get
\bna
S(X,t)&=&\sum_{X^{-B}\ll \Xi\ll X^{\varepsilon}\atop \text{dyadic}}\frac{1}{Q}
\int_{\mathbb{R}}W\left(\frac{\zeta}{\Xi}\right) \sum_{q\sim Q}\frac{g(q,\zeta)}{q}\;
\sideset{}{^\star}\sum_{a\bmod{q}}
\left\{\sum_{n=1}^{\infty}\lambda_f(n)
e\left(-\frac{na}{q}\right)U\left(\frac{n}{X}\right)
e\left(-\frac{n\zeta}{qQ}\right)\right\}\nonumber\\
&& \left\{\sum_{m=1}^{\infty}\lambda_g(m)e\left(\frac{ma}{q}\right)
V\left(\frac{m}{X}\right)
e\left(t \varphi\left(\frac{m}{X}\right)+\frac{m\zeta}{qQ}\right)\right\}\mathrm{d}\zeta
+O_A(X^{-A}),
\ena
where $\widetilde{W}(x)\in \mathcal{C}_c^{\infty}(1,2)$,
 satisfying $\widetilde{W}^{(j)}(x)\ll_j 1$ for any integer $j\geq 0$.

Next we break the $q$-sum $\sum_{q\sim Q}$ into dyadic segments $q\sim C$ with $1\ll C\ll Q$ and write
\bea\label{C range}
S(X,t)=\sum_{X^{-B}\ll \Xi\ll X^{\varepsilon}}
\sum_{1\ll C\ll Q\atop \text{dyadic}}\mathscr{S}(C,\Xi)+O(X^{-A}),
\eea
where $\mathscr{S}(C,\Xi)=\mathscr{S}(X,t,C,\Xi)$ is
\bea\label{beforeVoronoi}
\mathscr{S}(C,\Xi)&=&\frac{1}{Q}
\int_{\mathbb{R}}W\left(\frac{\zeta}{\Xi}\right) \sum_{q\sim C}\frac{g(q,\zeta)}{q}\;
\sideset{}{^\star}\sum_{a\bmod{q}}
\left\{\sum_{n=1}^{\infty}\lambda_f(n)
e\left(-\frac{na}{q}\right)U\left(\frac{n}{X}\right)
e\left(-\frac{n\zeta}{qQ}\right)\right\}\nonumber\\
&& \left\{\sum_{m=1}^{\infty}\lambda_g(m)e\left(\frac{ma}{q}\right)
V\left(\frac{m}{X}\right)
e\left(t \varphi\left(\frac{m}{X}\right)+\frac{m\zeta}{qQ}\right)\right\}\mathrm{d}\zeta.
\eea

Without loss of generality, for the $\zeta$-integral,
we only consider the contribution from $\zeta\geq 0$ (the contribution from $\zeta\leq 0$ can be
estimated similarly). By abuse of notation, we still write the contribution from $\zeta\geq 0$
as $\mathscr{S}(C,\Xi)$. We now proceed to estimate $\mathscr{S}(C,\Xi)$.

\subsection{Voronoi summations}
In what follows, we dualize the $n$-and $m$-sums in \eqref{beforeVoronoi} using
Voronoi summation formulas.

We first consider the sum over $m$.
Depending on whether $f$ is holomorphic or Maass, we apply
Lemma \ref{voronoiGL2-holomorphic}
or Lemma \ref{voronoiGL2-Maass} respectively with $h_1(y)=V(y)
e\left(t\varphi(y)+\zeta Xy/qQ\right)$, to transform the $m$-sum in
\eqref{beforeVoronoi} into
\bea\label{after GL2 Voronoi}
\frac{X}{q}\sum_{\pm}\sum_{m=1}^{\infty}\lambda_{g}(m)e\left(\mp \frac{m\overline{a}}{q}\right)
\Phi_{h_1}^{\pm}\left(\frac{mX}{q^2}\right),
\eea
where if $g$ is holomorphic, $\Phi_{h_1}^+(x)=\Phi_{h_1}(x)$ with $\Phi_{h_1}(x)$
given by \eqref{intgeral transform-1} and $\Phi_{h_1}^-(x)=0$,
while for $g$ a Hecke--Maass cusp form,
$\Phi_{h_1}^{\pm}(x)$ are given by \eqref{intgeral transform-2}.

Similarly, we apply
Lemma \ref{voronoiGL2-holomorphic}
or Lemma \ref{voronoiGL2-Maass} with
$h_2(y)=U(y)e\left(-\zeta Xy/qQ\right)$ to transform te
$n$-sum in \eqref{beforeVoronoi} into
\bea\label{$n$-sum after GL2 Voronoi}
\frac{X}{q}\sum_{\pm}\sum_{n=1}^{\infty}\lambda_f(n)e\left(\pm \frac{n\overline{a}}{q}\right)
\Phi_{h_2}^{\pm}\left(\frac{nX}{q^2}\right),
\eea
where if $f$ is holomorphic, $\Phi_{h_2}^+(x)=\Phi_{h_2}(x)$ with $\Phi_{h_2}(x)$
given by \eqref{intgeral transform-1} and $\Phi_{h_2}^-(x)=0$,
while for $f$ a Hecke--Maass cusp form, $\Phi_{h_2}^{\pm}(x)$ are given by \eqref{intgeral transform-2}.

As is typical in applying the $\delta$-method, we assume that
\bea\label{assumption 1}
Q<X^{1/2-\varepsilon}.
\eea
Then we have $mX/q^2\gg X^{\varepsilon}$ and $nX/q^2\gg X^{\varepsilon}$. In particular,
by \eqref{The $-$ case}, the contributions from
$\Phi_{h_1}^{-}\left(mX/q^2\right)$ and $\Phi_{h_2}^{-}\left(nX/q^2\right)$ are negligible.
For $\Phi_{h_1}^{+}\left(mX/q^2\right)$ and $\Phi_{h_2}^{+}\left(nX/q^2\right)$, we apply
Lemma \ref{voronoiGL2-holomorphic-asymptotic},
Lemma \ref{voronoiGL2-Maass-asymptotic}
and Remark \ref{decay-of-largeX} and find that
the sum \eqref{after GL2 Voronoi} is asymptotically equal to
\bea\label{integral 1}
 \frac{X^{3/4}}{q^{1/2}}\sum_{\pm}\sum_{m=1}^{\infty}\frac{\lambda_g(m)}{m^{1/4}}
e\left(-\frac{m\overline{a}}{q}\right)\int_0^\infty V(y)y^{-1/4}
e\left(t\varphi(y)+\frac{\zeta Xy}{qQ}\pm \frac{2\sqrt{mXy}}{q}\right)\mathrm{d}y,
 \eea
and the sum \eqref{$n$-sum after GL2 Voronoi} is asymptotically equal to
 \bea\label{integral 2}
 \frac{X^{3/4}}{q^{1/2}}\sum_{\pm}\sum_{n=1}^{\infty}\frac{\lambda_f(n)}{n^{1/4}}
e\left(\frac{n\overline{a}}{q}\right)\int_0^\infty U(y)y^{-1/4}
e\left(-\frac{\zeta Xy}{qQ}\pm \frac{2\sqrt{nXy}}{q}\right)\mathrm{d}y.
 \eea
 Note that by the assumption \eqref{first-derivative-varphi},
 the first derivative of the phase function of the
 exponential function in \eqref{integral 1} in the plus case is
 \bna
 t\varphi'(y)+\frac{\zeta X}{qQ}+\frac{\sqrt{mX/y}}{q}\gg X^{\varepsilon}.
 \ena
 By applying integration by parts
 repeatedly, one finds that the contribution from the plus case is negligible.
 Similarly, the contribution from the minus case in \eqref{integral 2} is negligible.
% Moreover, for the plus case in \eqref{integral 2}, if $|\zeta|X/(CQ)\leq 1$, then
% the first derivative of the phase function is
% \bna
% -\frac{\zeta X}{qQ}+\frac{\sqrt{mX/y}}{q}\gg X^{\varepsilon},
% \ena
% and thus again the contribution from these $\zeta$ is negligible. So we can
% assume $Q/X\ll CQ/X\ll \zeta\ll X^{\varepsilon}$.

Assembling the above results, $\mathscr{S}(C,\Xi)$ in \eqref{beforeVoronoi} is asymptotically equal to
 \bea\label{GL2}
&&
 \frac{X^{3/2}}{Q}\sum_{q\sim C}q^{-2}
\sum_{m=1}^{\infty}\frac{\lambda_g(m)}{m^{1/4}}
\sum_{n=1}^{\infty}\frac{\lambda_f(n)}{n^{1/4}}S\left(m-n,0;q\right)\mathcal{I}(m,n,q,\Xi),
 \eea
where
\bea\label{I definition}
\mathcal{I}(m,n,q,\Xi)=
\int_{0}^{\infty}W\left(\frac{\zeta}{\Xi}\right)
g(q,\zeta)
\Phi\left(m,q,\zeta\right)\Psi\left(n,q,\zeta\right)
\mathrm{d}\zeta
\eea
with
\bea\label{Phi definition}
\Phi\left(m,q,\zeta\right)=\int_0^\infty V(y)y^{-1/4}
e\left(t\varphi(y)+\frac{\zeta Xy}{qQ}-\frac{2\sqrt{mXy}}{q}\right)\mathrm{d}y
\eea
and
\bea\label{Psi definition}
\Psi\left(n,q,\zeta\right)=\int_0^\infty U(y)y^{-1/4}
e\left(-\frac{\zeta Xy}{qQ}+\frac{2\sqrt{nXy}}{q}\right)\mathrm{d}y.
\eea

Note that for $\triangle<t^{1/2-\varepsilon}$, defined in \eqref{derivative-of-V},
by Lemma \ref{lem: upper bound},
 the integral $\Phi\left(m,q,\zeta\right)$ is negligibly small unless
 $\sqrt{mX}/q\ll X^{\varepsilon}\max\left\{t,X\Xi/qQ\right\}$.
 Thus we only need to consider those $m$ in the range
 $m \ll X^{\varepsilon}\max\{C^2t^2/X,X\Xi^2/Q^2\}$.
Similarly, up to a negligible error, we only need to consider those $n$
in the range $n\asymp  X\Xi^2/Q^2$.
Making smooth partitions of unity into dyadic segments to the sums over $m$ and $n$ in \eqref{GL2},
we arrive at
\bea\label{M-N1-range}
\mathscr{S}(C,\Xi)\ll
\sum_{ M \ll X^{\varepsilon}\max\{C^2t^2/X,X\Xi^2/Q^2\}\atop \text{dyadic}}
\sum_{N_1\asymp
X\Xi^2/Q^2\atop \text{dyadic}}
\left|\mathbf{T}\right|,
\eea
where temporarily, $\mathbf{T}:=\mathbf{T}(X,C,M,N_1,\Xi)$ is given by
\bea\label{T definition}
\mathbf{T}=\frac{X^{3/2}}{Q}\sum_{q\sim C}
q^{-2}
\sum_{m\sim M }
\frac{\lambda_g(m)}{m^{1/4}}
\sum_{n\sim N_1}\frac{\lambda_f(n)}{n^{1/4}}S\left(m-n,0;q\right)\mathcal{I}(m,n,q,\Xi).
\eea

Now we consider the integral $\mathcal{I}(m,n,q,\Xi)$ in \eqref{I definition}.
First we apply the stationary phase method to the integral $\Psi\left(n,q,\zeta\right)$
in \eqref{Psi definition}.
The stationary point $y_0$ is given by $y_0=nQ^2/(X\zeta^2)$.
Applying Lemma \ref{lemma:exponentialintegral} (2) with $Y=Z=1$ and
$H=R=\sqrt{nX}/q\gg X^{\varepsilon}$, we obtain
\bna
\Psi\left(n,q,\zeta\right)
=\frac{q^{1/2}}{(nX)^{1/4}}
e\left(\frac{nQ}{q\zeta}\right)
U^{\natural}\left(\frac{nQ^2}{X\zeta^2}\right)+O_A\left(X^{-A}\right),
\ena
where $U^{\natural}$ is an $1$-inert function (depending on $A$)
supported on $y_0 \asymp 1$.
Plugging this asymptotic formula for $\Psi\left(n,q,\zeta\right)$ and \eqref{Phi definition}
into \eqref{I definition}
and switching the order of integrations, one has
\bea\label{I-middle}
\mathcal{I}(m,n,q,\Xi)=\frac{q^{1/2}}{(nX)^{1/4}}
\int_0^\infty \mathcal{K}(y;n,q,\Xi)V(y)y^{-1/4}
e\bigg(t\varphi(y)-\frac{2\sqrt{mXy}}{q}\bigg)
\mathrm{d}y+O_A(X^{-A})
\eea
with
\bna
\mathcal{K}(y;n,q,\Xi)=
\int_0^{\infty}g(q,\zeta)
W\bigg(\frac{\zeta}{X^{\varepsilon}}\bigg)\widetilde{W}\bigg(\frac{\zeta}{\Xi}\bigg)
U^{\natural}\bigg(\frac{nQ^2}{X\zeta^2}\bigg)
e\bigg(\frac{\zeta Xy}{qQ}+\frac{nQ}{q\zeta}\bigg)
\mathrm{d}\zeta.
\ena
Next, we derive an asymptotic expansion for $\mathcal{K}(y;n,q,\Xi)$.
By changing variable $nQ^2/(X\zeta^2)\rightarrow \zeta$,
\bna
\mathcal{K}(y;n,q,\Xi)
=\frac{n^{1/2}Q}{X^{1/2}}
\int_0^{\infty}\phi(\zeta)
\exp\big(i\varpi(\zeta)\big)\mathrm{d}\zeta,
\ena
where
\bna
\phi(\zeta):=-\frac{1}{2}\zeta^{-3/2}U^{\natural}(\zeta)
g\bigg(q,\frac{n^{1/2}Q}{\zeta^{1/2}X^{1/2}}\bigg)
W\bigg(\frac{n^{1/2}Q}{\zeta^{1/2}X^{1/2+\varepsilon}}\bigg)
\widetilde{W}\bigg(\frac{n^{1/2}Q}{\zeta^{1/2}X^{1/2}\Xi}\bigg)
\ena
and the phase function $\varpi(\zeta)$ is given by
\bna
\varpi(\zeta)=\frac{2\pi n^{1/2}X^{1/2}}{q}
\left(y\zeta^{-1/2}+\zeta^{1/2}\right).
\ena
Note that
\bna
\varpi'(\zeta)=
\frac{\pi n^{1/2}X^{1/2}}{q}
\left(-y\zeta^{-3/2}+\zeta^{-1/2}\right),
\ena
and for $j\geq 2$,
\bna
\varpi^{(j)}(\zeta)
=\left(-\frac{3}{2}\right)\cdot\cdot\cdot\left(\frac{1}{2}-j\right)
\frac{\pi n^{1/2}X^{1/2}}{q}\left(
-y\zeta^{-1/2-j}+\frac{1}{2j-1}\zeta^{1/2-j}\right).
\ena
Thus the stationary point is $\zeta_0=y$ and
$\varpi^{(j)}(\zeta)\ll_j n^{1/2}X^{1/2}/q$ for $j\geq 2$.
By \eqref{rapid decay g}, we have $\phi^{(j)}(\zeta)\ll_j X^{\varepsilon}$.
Applying Lemma \ref{lemma:exponentialintegral} (2) with $Y=Z=1$ and
$H=R=n^{1/2}X^{1/2}/q\gg X^{\varepsilon}$, we obtain
\bea\label{K-integral}
\mathcal{K}(y;n,q,\Xi)
=\frac{n^{1/4}q^{1/2}Q}{X^{3/4}}e\left(\frac{2\sqrt{nXy}}{q}\right)
F(y)+O_A\left(X^{-A}\right),
\eea
where $F(y)=F(y;\Xi)$ is an inert function (depending on $A$ and $\Xi$)
supported on $\zeta_0 \asymp 1$.
Substituting \eqref{K-integral} into \eqref{I-middle}, we get
\bea\label{I-middle-0}
\mathcal{I}(m,n,q,\Xi)=\frac{qQ}{X}
\int_0^\infty V(y)F(y)y^{-1/4}
e\bigg(t\varphi(y)+\frac{2\sqrt{nXy}}{q}-\frac{2\sqrt{mXy}}{q}\bigg)
\mathrm{d}y+O_A(X^{-A}).
\eea

Further substituting \eqref{I-middle-0} into \eqref{T definition} and writing the Ramanujan sum
$S\left(m-n,0;q\right)$ as $\sum_{d|(m-n,q)}d\mu(q/d)$, one has
\bea\label{beforeCauchy}
\mathbf{T}=X^{1/2}\sum_{q\sim C}
q^{-1}\sum_{d|q}d\mu\left(\frac{q}{d}\right)
\sum_{m\sim M }
\frac{\lambda_g(m)}{m^{1/4}}
\sum_{n\sim N_1\atop n\equiv m\bmod d}\frac{\lambda_f(n)}{n^{1/4}}\mathfrak{I}(m,n,q)+O_A\left(X^{-A}\right),
\eea
where $\mathfrak{I}(m,n,q)=\mathfrak{I}(m,n,q;\Xi)$ is given by
\bea\label{I-definition}
\mathfrak{I}(m,n,q)
=\int_0^\infty \widetilde{V}(y)
e\bigg(t\varphi(y)+\frac{2\sqrt{nXy}}{q}-\frac{2\sqrt{mXy}}{q}\bigg)
\mathrm{d}y.
\eea
Here $\widetilde{V}(y)=V(y)F(y)y^{-1/4}$
satisfying $\widetilde{V}^{(j)}(y)\ll_j \triangle^j$ and $\text{Var}(\widetilde{V})\ll 1$.
Recall $\triangle$ denotes the quantity such that $V^{(j)}(x)\ll \triangle^j$ (
see \eqref{derivative-of-V}).

Making a
change of variable  $y\rightarrow y^2$ in \eqref{I-definition}, one has
\bea\label{I-change-0}
\mathfrak{I}(m,n,q)
=2\int_0^\infty y\widetilde{V}(y^2)
e\left(t\varphi(y^2)+\frac{2X^{1/2}}{q}\left(n^{1/2}-m^{1/2}\right)y\right)
\mathrm{d}y.
\eea
Since the properties of the integral $\mathfrak{I}(m,n,q)$ depend on the size of $C$,
we split the modulus $C$ according to $C\leq X^{1+\varepsilon}\Xi/(Qt)$ or
$X^{1+\varepsilon}\Xi/(Qt) \leq C\ll Q$.

\subsection{The case of small modulus}
We first deal with the case $1\ll C\leq X^{1+\varepsilon}\Xi/(Qt)$.
If we assume $\left(\varphi(y^2)\right)''\gg 1$, equivalently $\varphi(y)\neq cy^{1/2}+c_0$ for any constant $c_0$,
then the second derivative of the phase function satisfies
\bna
t\left(\varphi(y^2)\right)''\gg t
\ena
and by Lemma \ref{lem: 2st derivative test, dim 1}, we have
\bna
\mathfrak{I}(m,n,q)\ll t^{-1/2}.
\ena
By this estimate, \eqref{individual bound} and \eqref{GL2: Rankin Selberg}, $\mathbf{T}$ in \eqref{beforeCauchy}
can be bounded by
\bna\label{small T}
\mathbf{T}&\ll& \frac{X^{1/2}N_1^{\vartheta}}{t^{1/2}(MN_1)^{1/4}}\sum_{q\sim C}
q^{-1}\sum_{d|q}d
\sum_{m\sim M }|\lambda_g(m)|
\sum_{n\sim N_1\atop n\equiv m\bmod d}1\nonumber\\
&\ll& \frac{X^{1/2}M^{3/4}N_1^{\vartheta}}{t^{1/2}N_1^{1/4}}\sum_{q\sim C}
q^{-1}\sum_{d|q}d\bigg(1+\frac{N_1}{d}\bigg)\nonumber\\
&\ll&t^{-1/2}X^{1/2}M^{3/4}N_1^{-1/4+\vartheta}(C+N_1)\nonumber\\
&\ll&t^{-1/2}X^{1/2+\varepsilon}\frac{X^{1/2+\vartheta}\Xi^{1+2\vartheta}}{Q^{1+2\vartheta}}
\left(\frac{X\Xi}{Qt}+\frac{X\Xi^2}{Q^2}\right)\nonumber\\
&\ll&\frac{X^{2+\vartheta+\varepsilon}}{Q^{2+2\vartheta}t^{1/2}}\left(\frac{1}{t}+\frac{1}{Q}\right)
\ena
recalling $\Xi\ll X^{\varepsilon}$,
$M \ll X^{\varepsilon}\max\{C^2t^2/X,X\Xi^2/Q^2\}\ll X^{1+\varepsilon}\Xi^2/Q^2$ and
$N_1\asymp  X\Xi^2/Q^2$ in \eqref{M-N1-range}.
Assuming
\bea\label{assumption 3}
Q<t
\eea
Then the contribution from $1\ll C\leq X^{1+\varepsilon}\Xi/(Qt)$ to $\mathscr{S}(C,\Xi)$ in
\eqref{M-N1-range} is
\bea\label{small contribution}
\frac{X^{2+\vartheta+\varepsilon}}{Q^{3+2\vartheta}t^{1/2}}.
\eea

\subsection{The case of large modulus}

In the subsequent sections, we deal with the case $X^{1+\varepsilon}\Xi/(Qt) \leq C\ll Q$.
In this case, we will evaluate the integral $\mathfrak{I}(m,n,q)$ more precisely.
The integral $\mathfrak{I}(m,n,q)$ has the following properties which will be proved
in Section \ref{proofs-of-technical-lemma}.

\begin{lemma}\label{integral:lemma-0}
Assume $V^{(j)}(x)\ll \triangle^j$ as defined in \eqref{derivative-of-V}
with $\triangle<t^{1/2-\varepsilon}$ and $C$ satisfies
$C\geq X^{1+\varepsilon}\Xi/(Qt)$.
Further assume $\varphi(x)=c\log x$ or $cx^{\beta}$ with $\beta\in (0,1), \beta\neq 1/2$.
Then we have
\bea\label{I-stationary phase-2}
\mathfrak{I}(m,n,q)
=
e\left(t\varphi(y_0^2)-Dy_0\right)\mathfrak{I}^*(m,n,q) + O_{A}(t^{-A}),
\eea
where $y_0=\left(ct/D\right)^{1/\beta}$ with $D=2q^{-1}(mX)^{1/2}$ and
\bea\label{I*}
\mathfrak{I}^*(m,n,q)
=\frac{1}{\sqrt{t}}G_{\natural}(y_*)
e\left(By_0+\frac{y_0^2}{2c\beta^2}\frac{B^2}{t}+B\sum_{j=2}^{K_2}g_{c,\beta,j}\left(y_0\right)
\left(\frac{B}{t}\right)^j\right) + O_{A}(t^{-A}).
\eea
Here $B=2q^{-1}(nX)^{1/2}$, $y_*$ is defined in \eqref{stationary point},
$ G_{\natural}(x)$ is some inert function
supported on $x\asymp 1$
and $g_{c,\beta, j}(x)$ is some polynomial function
depending only on $c,\beta,j$.
\end{lemma}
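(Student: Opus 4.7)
The plan is to apply the stationary phase method (Lemma \ref{lemma:exponentialintegral}(2)) to the oscillatory integral in \eqref{I-change-0}, whose full phase is $\Phi(y)=t\varphi(y^{2})+(B-D)y$, and then to extract the explicit form of the main term by a Taylor expansion around the stationary point of the $B$-free phase $\psi_{0}(y)=t\varphi(y^{2})-Dy$.

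First I would verify the hypotheses of Lemma \ref{lemma:exponentialintegral}. Under the running assumption $C\geq X^{1+\varepsilon}\Xi/(Qt)$ combined with the dyadic ranges of $m$ and $n$ coming from the supports in $\Phi(m,q,\zeta)$ and $\Psi(n,q,\zeta)$, one has $D\lesssim t$ and $B\lesssim t/X^{\varepsilon}$; in particular the ratio $B/t$ is a negative power of $X$. A short computation shows that, both for $\varphi(x)=c\log x$ and for $\varphi(x)=cx^{\beta}$ with $\beta\neq 1/2$ (the excluded case being precisely where $\tfrac{d^{2}}{dy^{2}}\varphi(y^{2})$ vanishes), the stationary point $y_{\ast}$ of $\Phi$ lies in a compact subinterval of $(0,\infty)$ whenever $D\asymp t$, and $\Phi''(y_{\ast})\asymp t$ there; if $D$ is far from $t$ the first-derivative test from Lemma \ref{lemma:exponentialintegral}(1) already yields $\mathfrak{I}(m,n,q)=O_{A}(t^{-A})$, so we may restrict to the resonance regime. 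Because the amplitude $y\widetilde V(y^{2})$ is $\triangle$-inert and $\triangle<t^{1/2-\varepsilon}$, the parameter $R$ of Lemma \ref{lemma:exponentialintegral} satisfies $R\gg t^{\varepsilon}$, and the lemma delivers
\begin{equation*}
  \mathfrak{I}(m,n,q)=\frac{1}{\sqrt{t}}\,\widetilde G(y_{\ast})\,e\bigl(\Phi(y_{\ast})\bigr)+O_{A}(t^{-A})
\end{equation*}
with $\widetilde G$ an inert function supported on $y_{\ast}\asymp 1$. This already accounts for the $t^{-1/2}$ normalization in \eqref{I*}.

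Next I would unfold $\Phi(y_{\ast})$ into the explicit expansion stated in \eqref{I-stationary phase-2}--\eqref{I*}. Let $y_{0}$ denote the stationary point of $\psi_{0}$, so $\psi_{0}'(y_{0})=0$, and write $y_{\ast}=y_{0}+\delta$. The equation $\Phi'(y_{\ast})=\psi_{0}'(y_{\ast})+B=0$, combined with Taylor expansion of $\psi_{0}'$ around $y_{0}$, expresses $\delta$ as an analytic power series in $B/\psi_{0}''(y_{0})\asymp B/t$ via the implicit function theorem. Substituting back into $\Phi(y_{\ast})=\psi_{0}(y_{\ast})+By_{\ast}$ and using $\psi_{0}'(y_{0})=0$ to kill the linear term gives
\begin{equation*}
  \Phi(y_{\ast})=\psi_{0}(y_{0})+By_{0}-\frac{B^{2}}{2\psi_{0}''(y_{0})}+B\sum_{j\geq 2}c_{j}(y_{0})\Bigl(\frac{B}{t}\Bigr)^{j},
\end{equation*}
in which $\psi_{0}(y_{0})=t\varphi(y_{0}^{2})-Dy_{0}$ supplies the outer phase factor $e(t\varphi(y_{0}^{2})-Dy_{0})$ in \eqref{I-stationary phase-2}. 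For $\varphi(x)=cx^{\beta}$, closed-form differentiation of $\psi_{0}$ yields $\psi_{0}''(y_{0})$ explicitly and produces the coefficient $y_{0}^{2}/(2c\beta^{2})$ of $B^{2}/t$ in \eqref{I*}; the higher-order terms $g_{c,\beta,j}(y_{0})$ emerge as polynomials in $y_{0}$ from the recursive Taylor expansion.

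The main obstacle will be the bookkeeping needed to truncate the expansion at some finite $K_{2}=K_{2}(A)$ while keeping the total error below $O_{A}(t^{-A})$. Since $B/t=O(X^{-\varepsilon})$ is strictly bounded away from $1$ — and this is exactly where the hypothesis $C\geq X^{1+\varepsilon}\Xi/(Qt)$ is consumed — choosing $K_{2}$ large absorbs the tail of the series into an admissible error. A secondary technical point is to verify that $\widetilde G$ reparameterized through $y_{\ast}$ remains inert in $y_{\ast}\asymp 1$: this follows from the implicit function theorem applied to $\Phi'(y_{\ast})=0$ using the non-degeneracy $\Phi''(y_{\ast})\asymp t$.
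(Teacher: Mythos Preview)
Your proposal is correct and follows essentially the same approach as the paper: apply the stationary phase Lemma \ref{lemma:exponentialintegral}(2) to the integral \eqref{I-change-0} in the resonance regime $D\asymp t$, then Taylor-expand the resulting phase around $y_{0}$ in powers of $B/t$, using the hypothesis $C\geq X^{1+\varepsilon}\Xi/(Qt)$ to guarantee $B/t\ll X^{-\varepsilon}$ so that the series may be truncated. The only cosmetic difference is that the paper writes the stationary point in closed form $y_{\ast}=(ct/(D-B))^{1/\beta}$ (in a reparametrized $\beta$ for which $(\varphi(y^{2}))'=cy^{-\beta}$) and expands by the generalized binomial theorem, whereas you phrase the same expansion via the implicit function theorem; the content is identical.
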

%Trivially, by \eqref{I*}, $\mathfrak{I}^*(m,n,q)\ll t^{-1/2}$.
%Note that the exponential function $e\left(t\varphi(y_0^2)-Dy_0\right)$ does not
%depend on $n$. Plugging \eqref{I-stationary phase-2} into \eqref{beforeCauchy}, we obtain
%\bea\label{beforeCauchy-2}
%\mathbf{T}&=&X^{1/2}\sum_{q\sim C}
%q^{-1}\sum_{d|q}d\mu\left(\frac{q}{d}\right)
%\sum_{m\sim M }
%\frac{\lambda_g(m)}{m^{1/4}}e\left(t\varphi(y_0^2)-Dy_0\right)\nonumber\\&&\qquad\qquad\quad\times
%\sum_{n\sim N_1\atop n\equiv m\bmod d}\frac{\lambda_f(n)}{n^{1/4}}
%\mathfrak{I}^*(m,n,q)+O_A\left(X^{-A}\right).
%\eea
%By \eqref{GL2: Rankin Selberg} and the estimate
%$\lambda_f(n)\ll n^{\varepsilon}$, we have
%\bea\label{estimation 1}
%\mathbf{T}&\ll&t^{-1/2}X^{1/2}(MN_1)^{-1/4}\sum_{q\sim C}
%q^{-1}\sum_{d|q}d\sum_{m\sim M }|\lambda_g(m)|
%\sum_{n\sim N_1\atop n\equiv m\bmod d}1\nonumber\\
%&\ll&t^{-1/2}X^{1/2}M^{3/4}N_1^{-1/4}(C+N_1)\nonumber\\
%\eea
%since $C\ll Q$, $\Xi\ll X^{\varepsilon}$, $N_1\asymp  X\Xi^2/Q^2$ in \eqref{M-N1-range} and
%$1\ll M\ll X^{\varepsilon}\max\left\{C^2t^2/X,X\Xi^2/Q^2\right\}= X^{\varepsilon}C^2t^2/X$
%in \eqref{M-N1-range} as $C$ satisfies \eqref{C range-large modulus}.

\subsubsection{Cauchy-Schwarz and Poisson summation}
Applying the Cauchy-Schwarz inequality to \eqref{beforeCauchy-2} and
using the Rankin-Selberg estimate
\eqref{GL2: Rankin Selberg}, one sees that
\bea\label{Cauchy}
\mathbf{T}&\ll&\frac{X^{1/2}}{M^{1/4}}\sum_{q\sim C}
q^{-1}\sum_{d|q}d
\bigg(\sum_{m\sim M }
|\lambda_g(m)|^2\bigg)^{1/2}\left(\sum_{m\sim M}\bigg|
\sum_{n\sim N_1\atop n\equiv m\bmod d}\lambda_f(n)n^{-1/4}\mathfrak{I}^*(m,n,q)\bigg|^2\right)^{1/2}\nonumber\\
&\ll&X^{1/2}M^{1/4}\sum_{q\sim C}q^{-1}\sum_{d|q}d\sqrt{\mathbf{\Omega}(q,d)},
\eea
where
\bea\label{Omega}
\mathbf{\Omega}(q,d)=\sum_{m\in \mathbb{Z}}\omega\left(\frac{m}{M}\right)\bigg|
\sum_{n\sim N_1\atop n\equiv m\bmod d}\lambda_f(n)n^{-1/4}\mathfrak{I}^*(m,n,q)\bigg|^2.
\eea
Here $\omega$ is a nonnegative smooth function on $(0,+\infty)$, supported on $[2/3,3]$, and such
that $\omega(x)=1$ for $x\in [1,2]$.

Opening the absolute square, we break the $m$-sum into congruence classes
modulo $d$ and apply the Poisson summation formula to the sum over $m$ to get
\bea\label{omega-bound}
\mathbf{\Omega}(q,d)
&=&\sum_{n_1\sim N_1}\lambda_f(n_1)n_1^{-1/4}
\sum_{n_2\sim N_1\atop n_2\equiv n_1\bmod d}\overline{\lambda_f(n_2)}n_2^{-1/4}
\sum_{m\equiv n_1\bmod d}\omega\left(\frac{m}{M}\right)\mathfrak{I}^*(m,n_1,q)
\overline{\mathfrak{I}^*(m,n_2,q)}\nonumber\\
&=&\frac{M}{d}\sum_{n_1\sim N_1}\lambda_f(n_1)n_1^{-1/4}
\sum_{n_2\sim N_1\atop n_2\equiv n_1\bmod d}\overline{\lambda_f(n_2)}n_2^{-1/4}
\sum_{\widetilde{m}\in \mathbb{Z}}e\left(-\frac{\widetilde{m}n_1}{d}\right)
\mathcal{H}\left(\frac{\widetilde{m}M}{d}\right),
\eea
where the integral $\mathcal{H}(x)=\mathcal{H}(x;n_1,n_2,q)$ is given by
\bea\label{H-integral}
\mathcal{H}(x)=\int_{\mathbb{R}}
\omega\left(\xi\right)
\mathfrak{I}^*\left(M\xi,n_1,q\right)
\overline{\mathfrak{I}^*\left(M\xi,n_2,q\right)}
\, e\left(-x\xi\right)\mathrm{d}\xi.
\eea

We have the following estimates for $\mathcal{H}(x)$, whose proofs we postpone to
Section \ref{proofs-of-technical-lemma}.

\begin{lemma}\label{integral:lemma} Assume
$\varphi(x)=c\log x$ or $cx^{\beta}$ with $\beta\in (0,1)\backslash \{1/2,3/4\}$.
Further assume $V^{(j)}(x)\ll \triangle^j$ as defined in \eqref{derivative-of-V}
with $\triangle<t^{1/2-\varepsilon}$ and $C$ satisfies
$C\geq X^{1+\varepsilon}\Xi/(Qt)$.

(1) We have $\mathcal{H}(x)\ll t^{-1}$ for any $x\in \mathbb{R}$.

(2) For $x\gg X^{1+\varepsilon}\Xi/(CQ)$, we have $\mathcal{H}(x)\ll_A X^{-A}$.

(3) For $x\neq 0$, we have
$\mathcal{H}(x)\ll t^{-1}|x|^{-1/2}$.

(4) $\mathcal{H}(0)$
is negligibly small unless $|n_1-n_2|\ll X^{\varepsilon}$.

\end{lemma}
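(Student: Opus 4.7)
The plan is to substitute the asymptotic \eqref{I*} into the definition \eqref{H-integral} to obtain, modulo a negligible error,
\begin{equation*}
\mathcal{H}(x) = \frac{1}{t}\int_{\mathbb{R}} \omega(\xi)\, G_{\natural}\!\bigl(y_*(M\xi,n_1)\bigr)\overline{G_{\natural}\!\bigl(y_*(M\xi,n_2)\bigr)}\, e\bigl(\Phi_0(\xi) - x\xi\bigr)\,\mathrm{d}\xi,
\end{equation*}
where, writing $B_i = 2q^{-1}(n_iX)^{1/2}$ and $y_0(\xi)=(ctq/(2\sqrt{M\xi X}))^{1/\beta}\asymp 1$,
\begin{equation*}
\Phi_0(\xi) = (B_1-B_2)y_0(\xi) + \frac{y_0(\xi)^2(B_1^2-B_2^2)}{2c\beta^2\, t} + \sum_{j=2}^{K_2}g_{c,\beta,j}(y_0(\xi))\,\frac{B_1^{j+1}-B_2^{j+1}}{t^j}.
\end{equation*}
The hypothesis $C\geq X^{1+\varepsilon}\Xi/(Qt)$ combined with $B_i\asymp X\Xi/(CQ)$ yields $B_i/t\ll X^{-\varepsilon}$, so $\Phi_0$ is dominated by its leading piece $(B_1-B_2)y_0(\xi)$ at every order of differentiation. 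Together with $y_0^{(k)}(\xi)\asymp 1$ on $\xi\asymp 1$ (since $y_0(\xi)\propto \xi^{-1/(2\beta)}$), this gives $|\Phi_0^{(k)}(\xi)| \asymp |B_1-B_2|$ for $k=1,2$.

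Part (1) is immediate from the prefactor $1/t$, the boundedness of $G_{\natural}$, and the compact $\xi$-support. For Part (2), one has $|\Phi_0'(\xi)|\ll X\Xi/(CQ)$, so if $|x|\gg X^{1+\varepsilon}\Xi/(CQ)$ then $|\partial_\xi(\Phi_0(\xi)-x\xi)|\gg |x|$, and iterating Lemma \ref{lem: upper bound} gives the required negligible bound. For Part (3), since $\mathcal{H}(x)\ll t^{-1}\leq t^{-1}|x|^{-1/2}$ trivially for $|x|\leq 1$, I may assume $|x|\geq 1$ and split in two. If $|x|\leq c_0|B_1-B_2|$ for a small absolute constant $c_0$, the second derivative test (Lemma \ref{lem: 2st derivative test, dim 1}) with $\lambda_0\asymp |B_1-B_2|$ gives $\mathcal{H}(x)\ll t^{-1}|B_1-B_2|^{-1/2}\leq t^{-1}|x|^{-1/2}$. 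If instead $|x|\geq c_0|B_1-B_2|$, then $|\partial_\xi(\Phi_0(\xi)-x\xi)|\gg |x|$ and Lemma \ref{lem: upper bound} yields $\mathcal{H}(x)\ll_A t^{-1}|x|^{-A}$; the degenerate case $B_1=B_2$ (i.e.\ $n_1=n_2$) reduces to an integration by parts on a smooth amplitude.

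For Part (4) I set $x=0$ and apply Lemma \ref{lem: upper bound} with $R\asymp |B_1-B_2|$. From
\begin{equation*}
|B_1-B_2| \;=\; \frac{2X^{1/2}|n_1-n_2|}{q(n_1^{1/2}+n_2^{1/2})} \;\asymp\; \frac{|n_1-n_2|\,Q}{C\,\Xi},
\end{equation*}
together with $C\leq Q$ and $\Xi\leq X^{\varepsilon}$, one sees that $|B_1-B_2|\gg X^{\varepsilon}$ whenever $|n_1-n_2|\gg X^{2\varepsilon}$, which makes $\mathcal{H}(0)$ negligible after relabelling $\varepsilon$. The principal technical point will be the uniform derivative control in the first paragraph: one must verify that the correction tails of $\Phi_0$ do not merely gain a factor of $X^{-\varepsilon}$ in magnitude, but also in every $\xi$-derivative, and uniformly in $j\leq K_2$. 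This is precisely what the hypothesis $C\geq X^{1+\varepsilon}\Xi/(Qt)$ provides through the bound $B_i/t\leq X^{-\varepsilon}$, and once it is in place the stationary and non-stationary phase analyses combine cleanly to yield all four parts.
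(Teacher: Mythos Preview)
Your approach is essentially the same as the paper's: substitute the stationary-phase expansion \eqref{I*} into \eqref{H-integral}, identify the leading phase $(B_1-B_2)y_0(\xi)$ with $y_0(\xi)\propto\xi^{-1/(2\beta)}$, and then treat the resulting one-dimensional oscillatory integral by a combination of the first-derivative bound (Lemma~\ref{lem: upper bound}) and the second-derivative test (Lemma~\ref{lem: 2st derivative test, dim 1}). Parts (1), (2), and (4) are handled correctly and match the paper's argument.

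There is, however, a genuine slip in your dichotomy for Part~(3). You split at $|x|=c_0|B_1-B_2|$ with $c_0$ \emph{small}, and then claim that for $|x|\geq c_0|B_1-B_2|$ one has $|\Phi_0'(\xi)-x|\gg |x|$. This fails precisely in the range $|x|\asymp|B_1-B_2|$: since $\Phi_0'(\xi)$ has fixed sign and magnitude $\asymp|B_1-B_2|$ on the support of $\omega$, a value of $x$ with the same sign and $|x|\asymp|B_1-B_2|$ can produce a stationary point $\Phi_0'(\xi_0)=x$, and the first-derivative bound is unavailable. The fix is simply to reverse the direction of the constant: split at $|x|=C_0|B_1-B_2|$ with $C_0$ \emph{large}. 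For $|x|\leq C_0|B_1-B_2|$ the second-derivative test (with $\lambda_0\asymp|B_1-B_2|\geq C_0^{-1}|x|$) gives $\mathcal{H}(x)\ll t^{-1}|B_1-B_2|^{-1/2}\ll_{C_0} t^{-1}|x|^{-1/2}$; for $|x|>C_0|B_1-B_2|$ one genuinely has $|\Phi_0'(\xi)-x|\geq |x|-|\Phi_0'(\xi)|\gg |x|$, and Lemma~\ref{lem: upper bound} applies. This is exactly how the paper proceeds (it first localizes to $|x|\asymp|B_1-B_2|$ via integration by parts, then applies the second-derivative test there), and with this correction your proof is complete. Note also that your claim $y_0^{(k)}(\xi)\asymp 1$ is only needed (and only valid) for $k\leq 2$; for the second derivative it uses precisely the exclusion $\varphi(x)\neq cx^{3/4}$, while for $k\geq 3$ you only need the upper bound $y_0^{(k)}\ll 1$, which always holds.
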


With estimates for $\mathcal{H}(x)$ ready, we now continue with the treatment
of $\mathbf{\Omega}(q,d)$
in \eqref{omega-bound}. by Lemma \ref{integral:lemma} (2), the contribution from the terms with
\bea\label{m range-final}
|\widetilde{m}|\gg dC^{-1}Q^{-1}M^{-1}X^{1+\varepsilon}\Xi:=N_2,
\eea
to $\mathbf{\Omega}(q,d)$ is negligible. So
we only need to consider the range $0\leq |\widetilde{m}|\leq N_2$.

We treat the cases where $\widetilde{m}=0$ and $\tilde{m}\neq 0$ separately
and denote their contributions to $\mathbf{\Omega}(q,d)$ by
$\mathbf{\Omega}_0$ and $\mathbf{\Omega}_{\neq 0}$, respectively.

\subsubsection{The zero frequency}\label{The zero frequency}

Let $\mathbf{\Sigma}_{0}$ denote the contribution
of $\mathbf{\Omega}_0$ to \eqref{omega-bound}. Correspondingly, we denote its contribution to
\eqref{Cauchy} by $\mathbf{\Sigma}_0$.

\begin{lemma}\label{lemma:zero}
Assume
\bea\label{assumption: range 1}
Q>(X/t)^{1/2}.
\eea
We have
\bna
\mathbf{\Sigma}_0
\ll X^{\varepsilon}Q^{3/2}t.
\ena
\end{lemma}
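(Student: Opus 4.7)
The plan is to substitute the tight input from Lemma \ref{integral:lemma} into \eqref{omega-bound}, estimate $\mathbf{\Omega}_0$ using Rankin--Selberg averaging, and then propagate the bound through \eqref{Cauchy}.

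First I would write out the zero-frequency contribution to \eqref{omega-bound},
$$\mathbf{\Omega}_0=\frac{M}{d}\sum_{n_1\sim N_1}\lambda_f(n_1)n_1^{-1/4}\sum_{\substack{n_2\sim N_1\\ n_2\equiv n_1\bmod d}}\overline{\lambda_f(n_2)}\,n_2^{-1/4}\,\mathcal{H}(0;n_1,n_2,q),$$
and invoke Lemma \ref{integral:lemma} (4) to restrict the double sum to $|n_1-n_2|\ll X^{\varepsilon}$ (up to a negligible error), together with Lemma \ref{integral:lemma} (1) which gives $|\mathcal{H}(0)|\ll t^{-1}$. The main obstacle at this step is avoiding the Kim--Sarnak exponent $\vartheta$, which a pointwise application of $|\lambda_f(n)|\ll n^\vartheta$ would spuriously introduce. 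I would circumvent this by the standard symmetrisation $2|\lambda_f(n_1)\lambda_f(n_2)|\leq|\lambda_f(n_1)|^2+|\lambda_f(n_2)|^2$, applying the Rankin--Selberg mean bound \eqref{GL2: Rankin Selberg} over the free variable and trivially counting the $O(X^\varepsilon)$ close partner; this yields
$$\mathbf{\Omega}_0\ll X^{\varepsilon}\,\frac{MN_1^{1/2}}{dt}.$$

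The second step is to feed this into \eqref{Cauchy}. Since $\sum_{d\mid q}d^{1/2}\ll q^{1/2+\varepsilon}$ and so $\sum_{q\sim C}q^{-1}\sum_{d\mid q}d^{1/2}\ll C^{1/2+\varepsilon}$, one obtains
$$\mathbf{\Sigma}_0\ll X^{1/2+\varepsilon}\,C^{1/2}\,\frac{M^{3/4}N_1^{1/4}}{t^{1/2}}.$$

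Finally I would insert the effective sizes $N_1\asymp X\Xi^2/Q^2\ll X^{1+\varepsilon}/Q^2$ together with $M\ll X^{\varepsilon}C^2t^2/X$. The latter is legitimate because the condition $C\geq X^{1+\varepsilon}\Xi/(Qt)$ defining the large-modulus regime forces $C^2t^2/X$ to dominate $X\Xi^2/Q^2$ in the max defining the support of $M$, so the worst-case dyadic $M$ is of this size. Plugging in and using $C\ll Q$ collapses the estimate to
$$\mathbf{\Sigma}_0\ll X^{\varepsilon}\,C^{1/2}\cdot\frac{C^{3/2}t^{3/2}}{X^{3/4}t^{1/2}}\cdot\frac{X^{1/4}}{Q^{1/2}}=X^{\varepsilon}\,\frac{C^2t}{Q^{1/2}}\ll X^{\varepsilon}\,Q^{3/2}t,$$
which is the desired bound. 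The hypothesis $Q>(X/t)^{1/2}$ is used only to ensure that the range $X^{1+\varepsilon}\Xi/(Qt)\leq C\leq 2Q$ defining the large-modulus case is non-empty; no other use of it is required here. The remainder of the argument is pure bookkeeping once the Ramanujan-free estimate for $\mathbf{\Omega}_0$ is in place.
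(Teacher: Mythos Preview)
Your proof is correct and follows essentially the same route as the paper: restrict to $|n_1-n_2|\ll X^{\varepsilon}$ via Lemma~\ref{integral:lemma}(4), use the trivial bound $\mathcal{H}(0)\ll t^{-1}$, symmetrise to avoid the Kim--Sarnak exponent, apply Rankin--Selberg, and feed the resulting $\mathbf{\Omega}_0\ll X^{\varepsilon}MN_1^{1/2}/(dt)$ into \eqref{Cauchy}. The only difference is in how you justify $M\ll X^{\varepsilon}C^2t^2/X$: you observe that the large-modulus condition $C\geq X^{1+\varepsilon}\Xi/(Qt)$ already forces $C^2t^2/X\geq X\Xi^2/Q^2$, whereas the paper invokes the hypothesis $Q>(X/t)^{1/2}$ to deduce the (weaker) $M\ll X^{\varepsilon}Q^2t^2/X$. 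Your route is marginally sharper and shows that the hypothesis \eqref{assumption: range 1} is not actually needed inside the argument itself; either way the endgame $C\ll Q$ collapses both to $X^{\varepsilon}Q^{3/2}t$.
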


\begin{proof}
Splitting the sum over $n_1$ and $n_2$ according as $n_1=n_2$ or not,
and applying Lemma \ref{integral:lemma} (4),
the Rankin-Selberg estimate \eqref{GL2: Rankin Selberg}
and using the inequality
$|\lambda_f(n_1)\lambda_f(n_2)|\leq |\lambda_f(n_1)|^2+|\lambda_f(n_2)|^2$, we have
\bna
\mathbf{\Omega}_0
&\ll&
\frac{M}{dtN_1^{1/2}}
\mathop{\sum\sum}_{n_1,n_2\sim N_1\atop |n_1-n_2|\ll X^{\varepsilon}}
|\lambda_f(n_1)||\lambda_f(n_2)|
\\
&\ll&\frac{M}{dtN_1^{1/2}}\sum_{n_1\sim N_1}|\lambda_f(n_1)|^2
\sum_{n_2\sim N_1\atop |n_1-n_2|\ll X^{\varepsilon}}1\\
&\ll&\frac{X^{\varepsilon}MN_1^{1/2}}{dt}.
\ena
This bound when substituted in place of $\mathbf{\Omega}(q,d)$ into \eqref{Cauchy} yields that
\bea\label{estimate-1}
\mathbf{\Sigma}_0&\ll& X^{1/2+\varepsilon}M^{1/4}\sum_{q\sim C}q^{-1}\sum_{d|q}d
\frac{M^{1/2}N_1^{1/4}}{d^{1/2}t^{1/2}}
\ll \frac{X^{1/2+\varepsilon}M^{3/4}N_1^{1/4}C^{1/2}}{t^{1/2}}.
\eea
Recall $C\ll Q$ and from \eqref{M-N1-range} that
$1\ll M\ll X^{\varepsilon}\max\left\{C^2t^2/X,X\Xi^2/Q^2\right\}$
,$N_1\asymp X\Xi^2/Q^2$.
In particular, if we further assume $Q$ satisfies $Q>(X/t)^{1/2}$, then
$1\ll M\ll X^{\varepsilon}Q^2t^2/X$. Thus
\bna
\mathbf{\Sigma}_0
\ll X^{\varepsilon}Q^{3/2}t.
\ena
This proves the lemma.

\end{proof}

\subsubsection{The non-zero frequencies}\label{The non-zero frequencies}

Recall $\mathbf{\Omega}_{\neq 0}$ denotes the contribution from the terms with $\tilde{m}\neq 0$
to $\mathbf{\Omega}(q,d)$ in
\eqref{omega-bound}. Correspondingly, we denote its contribution to
\eqref{Cauchy} by $\mathbf{\Sigma}_{\neq 0}$. Using the inequality
$|\lambda_f(n_1)\lambda_f(n_2)|\leq |\lambda_f(n_1)|^2+|\lambda_f(n_2)|^2$, we have
\bea\label{bound-med}
\mathbf{\Omega}_{\neq 0}&\ll&
\frac{M}{dN_1^{1/2}}\sum_{n_1\sim N_1}|\lambda_f(n_1)|^2
\sum_{n_2\sim N_1\atop n_2\equiv n_1\bmod d}\;
\sum_{0\neq \widetilde{m}\ll N_2}\bigg|\mathcal{H}\left(\frac{\widetilde{m}M}{d}\right)\bigg|,
\eea
where $N_2=dC^{-1}Q^{-1}M^{-1}X^{1+\varepsilon}\Xi$ is defined in \eqref{m range-final}.

\begin{lemma}\label{lemma:nonzero}
Assume
\bea\label{assumption: range 2}
Q<\min\{t,X^{1/3}\}.
\eea
We have
\bna
\mathbf{\Sigma}_{\neq 0}\ll X^{5/4+\varepsilon}/Q.
\ena
\end{lemma}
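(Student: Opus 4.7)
The plan is to start from the estimate \eqref{bound-med} for $\mathbf{\Omega}_{\neq 0}$ and insert the pointwise bound $\mathcal{H}(x) \ll t^{-1}|x|^{-1/2}$ from Lemma \ref{integral:lemma} (3). Using $|\mathcal{H}(\widetilde{m}M/d)| \ll t^{-1}(d/(\widetilde{m}M))^{1/2}$ and summing over $0 < |\widetilde{m}| \leq N_2 = dC^{-1}Q^{-1}M^{-1}X^{1+\varepsilon}\Xi$ yields
\[
\sum_{0\neq \widetilde{m}\ll N_2}\Big|\mathcal{H}\Big(\frac{\widetilde{m}M}{d}\Big)\Big|
\ll \frac{d}{t M C^{1/2}Q^{1/2}}X^{1/2+\varepsilon}\Xi^{1/2}.
\]
The inner $n_2$-sum contributes $\ll 1+N_1/d$, and the Rankin--Selberg bound \eqref{GL2: Rankin Selberg} handles $\sum_{n_1\sim N_1}|\lambda_f(n_1)|^2 \ll N_1$. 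Putting these together gives
\[
\mathbf{\Omega}_{\neq 0}
\ll X^{\varepsilon}\,N_1^{1/2}\Big(1+\frac{N_1}{d}\Big)\,\frac{X^{1/2}\Xi^{1/2}}{t\, C^{1/2}Q^{1/2}}.
\]

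Next I would feed this into \eqref{Cauchy}. Taking the square root and summing $d\sqrt{\mathbf{\Omega}_{\neq 0}}$ over $d \mid q$ using the elementary bounds $\sum_{d|q}d \ll q^{1+\varepsilon}$ and $\sum_{d|q}d^{1/2} \ll q^{1/2+\varepsilon}$, and then summing over $q \sim C$, produces
\[
\mathbf{\Sigma}_{\neq 0}
\ll X^{3/4+\varepsilon}M^{1/4}\bigl(C^{3/4}N_1^{1/4}+C^{1/4}N_1^{3/4}\bigr)
\frac{\Xi^{1/4}}{t^{1/2}Q^{1/4}}.
\]
At this point I would use $N_1 \asymp X\Xi^2/Q^2$, and the fact that under the assumption $C \geq X^{1+\varepsilon}\Xi/(Qt)$ the larger of the two terms in the $M$-range \eqref{M-N1-range} is $C^2t^2/X$, so that $M \ll X^{\varepsilon}C^2t^2/X$. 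Substituting these values and using $\Xi \ll X^{\varepsilon}$ collapses the expression to
\[
\mathbf{\Sigma}_{\neq 0}
\ll X^{3/4+\varepsilon}\frac{C^{5/4}}{Q^{3/4}}+X^{5/4+\varepsilon}\frac{C^{3/4}}{Q^{7/4}}.
\]

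Finally, since $C \ll Q$, the two terms are bounded by $X^{3/4+\varepsilon}Q^{1/2}$ and $X^{5/4+\varepsilon}/Q$ respectively. The first is dominated by the second precisely when $Q^{3/2} \ll X^{1/2}$, i.e.\ $Q \ll X^{1/3}$, which is exactly the hypothesis \eqref{assumption: range 2}. Thus both contributions are $\ll X^{5/4+\varepsilon}/Q$, proving the lemma. The computation is essentially bookkeeping once Lemma \ref{integral:lemma} is in hand; the only subtle point is verifying that the crossover condition $C \geq X^{1+\varepsilon}\Xi/(Qt)$ for the $M$-bound combines consistently with $Q < \min\{t, X^{1/3}\}$ so that the choice $M \asymp C^2t^2/X$ is legitimate and $C \leq Q$ can absorb the first term into the second.
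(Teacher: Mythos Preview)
Your proof is correct and follows the paper's argument essentially verbatim, with one minor simplification: you apply Lemma~\ref{integral:lemma}\,(3) uniformly for all nonzero $\widetilde{m}$, whereas the paper first splits off the range $0<|\widetilde{m}|\ll N_3:=dX^{\varepsilon}/M$ and uses the trivial bound $\mathcal{H}\ll t^{-1}$ there. Since the small-$\widetilde{m}$ contribution is dominated by the $N_2^{1/2}$ term anyway (this uses $CQ\ll X\Xi$, which follows from $N_1\asymp X\Xi^2/Q^2\geq 1$ and $C\ll Q$), your shortcut yields the same bound for $\mathbf{\Omega}_{\neq 0}$ and the rest of the bookkeeping matches the paper line for line.
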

\begin{proof}
For $x=\widetilde{m}M/d$,
in order to apply the estimates for $\mathcal{H}(x)$ in Lemma \ref{integral:lemma},
we consider the cases where
$x\ll X^{\varepsilon}$ and
$x \gg X^{\varepsilon}$,
separately, and split the sum over $\tilde{m}$ accordingly.
Set
\bea\label{N3}
N_3:=dX^{\varepsilon}/M.
\eea
Then for
$0\neq \tilde{m}\ll N_3$ we will use the bound
$\mathcal{H}(x)\ll t^{-1}$ in Lemma \ref{integral:lemma}
(1), and for the remaining part we apply the bound
$\mathcal{H}(x)\ll t^{-1}|x|^{-1/2}$ in Lemma \ref{integral:lemma}
(3). By \eqref{bound-med}, we have
\bna
\mathbf{\Omega}_{\neq 0}&\ll&\frac{M}{dN_1^{1/2}}\sum_{n_1\sim N_1}|\lambda_f(n_1)|^2
\sum_{n_2\sim N_1\atop n_2\equiv n_1\bmod d}\;
\sum_{0\neq \widetilde{m}\ll N_3}t^{-1}\\
&&+\frac{M}{dN_1^{1/2}}\sum_{n_1\sim N_1}|\lambda_f(n_1)|^2
\sum_{n_2\sim N_1\atop n_2\equiv n_1\bmod d}\;
\sum_{N_3\ll \widetilde{m}\ll N_2}t^{-1}\bigg(|\widetilde{m}|M/d\bigg)^{-1/2}\\
  &\ll&\frac{MN_1^{1/2}N_3}{dt}\left(1+\frac{N_1}{d}\right)
+\frac{M^{1/2}N_1^{1/2}N_2^{1/2}}{d^{1/2}t}\left(1+\frac{N_1}{d}\right)\nonumber\\
    &\ll&\frac{M^{1/2}N_1^{1/2}}{d^{1/2}t}\left(1+\frac{N_1}{d}\right)\left(\frac{M^{1/2}N_3}{d^{1/2}}+N_2^{1/2}\right).
\ena
Here we have applied the Rankin--Selberg estimate \eqref{GL2: Rankin Selberg}.
Recall \eqref{m range-final} $N_2=dC^{-1}Q^{-1}M^{-1}X^{1+\varepsilon}\Xi$
and \eqref{N3} $N_3=dX^{\varepsilon}/M$. Thus
\bna
\mathbf{\Omega}_{\neq 0}
&\ll&\frac{X^{\varepsilon}M^{1/2}N_1^{1/2}}{t}\left(1+\frac{N_1}{d}\right)
\left(\frac{X^{\varepsilon}}{M^{1/2}}+\frac{X^{1/2}\Xi^{1/2}}{C^{1/2}Q^{1/2}M^{1/2}}\right)\\
&\ll&\frac{X^{1/2+\varepsilon}N_1^{1/2}}{C^{1/2}Q^{1/2}t}\left(1+\frac{N_1}{d}\right)
\ena
since $\Xi\ll X^{\varepsilon}$ and $Q<X^{1/2-\varepsilon}$ by \eqref{assumption 1}.

Since $1\ll M\ll X^{\varepsilon}\max\left\{C^2t^2/X,X\Xi^2/Q^2\right\}= X^{\varepsilon}C^2t^2/X$
in \eqref{M-N1-range} as $X^{1+\varepsilon}\Xi/(Qt) \leq C\ll Q$,
this bound when substituted in place of $\mathbf{\Omega}(q,d)$ in \eqref{Cauchy}
gives that
\bna
\mathbf{\Sigma}_{\neq 0}
&\ll&X^{1/2+\varepsilon}M^{1/4}\sum_{q\sim C}q^{-1}\sum_{d|q}
\frac{dX^{1/4+\varepsilon}N_1^{1/4}}{C^{1/4}Q^{1/4}t^{1/2}}\left(1+\frac{N_1^{1/2}}{d^{1/2}}\right)\\
&\ll&
\frac{M^{1/4}N_1^{1/4}X^{3/4+\varepsilon}C^{1/4}}{Q^{1/4}t^{1/2}}\left(C^{1/2}+N_1^{1/2}\right)\\
&\ll&N_1^{1/4}X^{1/2+\varepsilon}C^{3/4}Q^{-1/4}\left(C^{1/2}+N_1^{1/2}\right)\\
&\ll&N_1^{1/4}X^{1/2+\varepsilon}Q^{1/2}\left(Q^{1/2}+N_1^{1/2}\right)
\ena
Recall $1\ll N_1\asymp X\Xi^2/Q^2\ll X^{1+\varepsilon}/Q^2$ in \eqref{M-N1-range} and $C\ll Q$, we further imply
\bna
\mathbf{\Sigma}_{\neq 0}
  &\ll& X^{3/4+\varepsilon}
\left(Q^{1/2}+\frac{X^{1/2}}{Q}\right)\\
&\ll&X^{5/4+\varepsilon}/Q
\ena
provided that $Q<X^{1/3}$.

\end{proof}

\subsection{Conclusion}
By inserting the upper bounds in Lemmas \ref{lemma:zero}
and \ref{lemma:nonzero} into \eqref{Cauchy}, we have shown the following
\bna
\mathbf{T}\ll X^{\varepsilon}
\left(Q^{3/2}t+\frac{X^{5/4}}{Q}\right),
\ena
under the assumption $X^{1+\varepsilon}\Xi/(Qt) \leq C\ll Q$ and
\bea
\label{final-assumption-Q}
(X/t)^{1/2}<Q<\min\{t,X^{1/3}\}
\eea
which is a combination of \eqref{assumption 1}, \eqref{assumption: range 1} and
\eqref{assumption: range 2}. We set $Q=X^{1/2}/t^{2/5}$ to balance the contribution. Then
for $X^{1+\varepsilon}\Xi/(Qt) \leq C\ll Q$,
\bea\label{large T estimate}
\mathbf{T}\ll t^{2/5}X^{3/4+\varepsilon}
\eea
provided $X<t^{12/5}$. Moreover, for this choice of $Q$, when $C\leq X^{1+\varepsilon}\Xi/(Qt)$,
by \eqref{small contribution}, $\mathbf{T}$ is bounded by
\bna
\frac{X^{2+\vartheta+\varepsilon}}{Q^{3+2\vartheta}t^{1/2}}=X^{1/2+\varepsilon}t^{7/10+4\vartheta/5}.
\ena
Note that the estimate $t^{2/5}X^{3/4+\varepsilon}$ is superior to the trivial bound
$X^{1+\varepsilon}$ for $X>t^{8/5}$. So we assume $X>t^{8/5}$ and in this case the term
$X^{1/2+\varepsilon}t^{7/10+4\vartheta/5}$ is dominated by
the estimate in \eqref{large T estimate}, since
we can take $\vartheta=7/64$ by \cite{KS}.

Substituting the estimate in \eqref{large T estimate}
for $\mathbf{T}$ into \eqref{M-N1-range} and using \eqref{C range}, we obtain
\bna
 S(X,t)\ll t^{2/5}X^{3/4+\varepsilon}
\ena
provided $t^{8/5}<X<t^{12/5}$.

Notice that by Lemma \ref{integral:lemma} (3), $\varphi(x)$
further satisfies $\varphi(x)=c\log x$ or $cx^{\beta}$
($\beta\in (0,1)\backslash \{1/2,3/4\}$, $c\in \mathbb{R}\backslash \{0\}$); see \eqref{phi assumption}. The assumption $\triangle<t^{1/2-\varepsilon}$ arises also in applying Lemma \ref{integral:lemma} (3); see \eqref{assumption-on-Delta}.
This completes the proof of Theorem \ref{main-theorem}.

\section{Proof of Corollary  \ref{sharp-cut-sum}}\label{proofs-of-corollary}
In this section, we prove Corollary \ref{sharp-cut-sum} in Section \ref{introduction}.
Without loss of generality, we assume that $f$ and $g$ are both Maass cusp forms.
If one of $f$ and $g$ is holomorphic, the proof is similar and simpler.
Note that from \eqref{GL2: Rankin Selberg}, we have
\bna
\sum_{X<n\leq X+X/\triangle}|\lambda_f(n)|^2\ll X/\triangle+X^{3/5}.
\ena
In particular, if $\triangle\leq X^{2/5}$, one has
\bna
\sum_{X<n\leq X+X/\triangle}|\lambda_f(n)|^2\ll X/\triangle.
\ena
Similarly, under the same assumption $\triangle\leq X^{2/5}$, we have
\bna
\sum_{X<n\leq X+X/\triangle}|\lambda_g(n)|^2\ll X/\triangle.
\ena

We choose the smooth function $V$ in \eqref{natural-sum} to be supported on
$[1,2]$ and $V(x)=1$ on $[1+1/\triangle,2-1/\triangle]$. Then, Theorem \ref{main-theorem} yields
\bna
&&\sum_{X<n\leq 2X}\lambda_f(n)
\lambda_g(n)e\left(t \varphi\left(\frac{n}{X}\right)\right)\\
&\ll&
t^{7/16}X^{3/4+\varepsilon}+
\sum_{X<n\leq X+X/\triangle}|\lambda_f(n)\lambda_g(n)|+
\sum_{2X-X/\triangle<n\leq 2X}|\lambda_f(n)\lambda_g(n)|\\
&\ll& t^{2/5}X^{3/4+\varepsilon}+
\bigg(\sum_{X<n\leq X+X/\triangle}|\lambda_f(n)|^2\bigg)^{1/2}
\bigg(\sum_{X<n\leq X+X\triangle}|\lambda_g(n)|^2\bigg)^{1/2}\\
&\ll& t^{2/5}X^{3/4+\varepsilon}+X/\triangle,
\ena
as long as $\triangle\leq X^{2/5}$ and $t^{8/5}<X<t^{12/5}$.
Corollary \ref{sharp-cut-sum}
then follows by choosing $\triangle=t^{1/2-\varepsilon}$ and
by noting that $t^{1/2-\varepsilon}\leq X^{2/5}$
if and only if $t^{5/4-\varepsilon}\leq X$.

\section{Proof of Corollary \ref{cor:main}} \label{sec:Cor}
In this section, we prove Corollary \ref{cor:main}. We introduce a
lemma about "Functional equation" of $L(s,1\boxplus(f \times g))$ before
proving Corollary \ref{cor:main}.

\begin{lemma}\label{lemma:functional equation}
For $\rm {Re}(s) > 1$, we have
$$
L(1-s,1\boxplus(f \times g))= \frac{1}{\varepsilon(f\times g)}\gamma(s)L(s,1\boxplus(f \times g)),
$$
where $\varepsilon(f\times g)$ is the root number of
$L(f \times g)$ with $|\varepsilon(f\times g)| = 1$,
$$\gamma(s)={(\pi^{-5})}^{s-\frac{1}{2}}\prod_{j=1}^{5} \Gamma(\frac{s+\kappa_{j}}{2})\Gamma(\frac{1-s+\kappa_{j}}{2})^{-1},$$
with $\kappa_{1}=0, \kappa_{2}=\frac{k-\kappa}{2}, \kappa_{3}=\frac{k-\kappa}{2}+1, \kappa_{4}=\frac{k+\kappa}{2}-1, \kappa_{5}=\frac{k+\kappa}{2}$,
and
$$
\gamma(\sigma- i t) = \overline{\omega_k} \left(\frac{t}{2 \pi} \right)^{5(\sigma-1 / 2)} \left(\frac{2 \pi e}{t}\right)^{5 i t}\left\{1+O\left(\frac{1}{t}\right)\right\},
$$
for $\sigma>1 / 2$, $t>1$,
$\omega_k=e\left(\frac{4 k-5}{8}\right)$.
\end{lemma}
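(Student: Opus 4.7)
The plan is to derive the claim in two separate steps: first, obtain the functional equation for $L(s, 1\boxplus(f\times g))$ as the product of the standard functional equations for $\zeta(s)$ and $L(s, f\times g)$; second, apply Stirling's formula to the resulting ratio of archimedean factors to extract the asymptotic behavior of $\gamma(\sigma - it)$ as $t \to \infty$.

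For the functional equation, I would start from the factorization $L(s, 1\boxplus(f\times g)) = \zeta(s)\,L(s,f\times g)$. Since $f$ and $g$ are holomorphic of weights $k \geq \kappa \geq 12$ and $f\perp g$, the Rankin--Selberg $L$-function $L(s,f\times g)$ is entire and has the standard completed form with archimedean factor
\[
L_\infty(s, f\times g) = \Gamma_\mathbb R\!\left(s + \tfrac{k-\kappa}{2}\right)\Gamma_\mathbb R\!\left(s + \tfrac{k-\kappa}{2} + 1\right)\Gamma_\mathbb R\!\left(s + \tfrac{k+\kappa}{2} - 1\right)\Gamma_\mathbb R\!\left(s + \tfrac{k+\kappa}{2}\right),
\]
where $\Gamma_\mathbb R(s) = \pi^{-s/2}\Gamma(s/2)$, and root number of modulus one (equal to $\varepsilon(f\times g)$). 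Multiplying by the archimedean factor $\Gamma_\mathbb R(s)$ of $\zeta(s)$ produces exactly $\prod_{j=1}^{5}\Gamma_\mathbb R(s+\kappa_j)$ with the five shifts $\kappa_j$ listed in the statement. Cross-multiplying the completed functional equation yields
\[
L(1-s,1\boxplus(f\times g)) = \varepsilon(f\times g)^{-1}\,\pi^{-5(s-1/2)}\prod_{j=1}^5 \frac{\Gamma\!\left(\tfrac{s+\kappa_j}{2}\right)}{\Gamma\!\left(\tfrac{1-s+\kappa_j}{2}\right)}\, L(s,1\boxplus(f\times g)),
\]
which is precisely the claimed identity with $\gamma(s)$ as defined.

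For the asymptotic, I would apply Stirling's formula to each factor $\Gamma((s+\kappa_j)/2)\Gamma((1-s+\kappa_j)/2)^{-1}$ at $s = \sigma - it$ with $t$ large. Writing the arguments as $(\sigma+\kappa_j)/2 - it/2$ and $(1-\sigma+\kappa_j)/2 + it/2$, the exponential decay $e^{-\pi t/4}$ arising from Stirling cancels between numerator and denominator, and the ratio of moduli contributes $(t/2)^{\sigma - 1/2}$ per factor, giving an overall modulus $(t/(2\pi))^{5(\sigma - 1/2)}$ after incorporating the prefactor $\pi^{-5(s-1/2)}$. The phase contribution from each factor contains a piece $-t\log(t/2) + t$ (summing to $-5t\log(t/2)+5t$), a piece $5t\log\pi$ from the $\pi^{-5(s-1/2)}$ prefactor, and a constant phase $-\pi(2\kappa_j-1)/4$ per factor. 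Since $\sum_{j=1}^5 \kappa_j = 2k$, the constant phase totals $-\pi(4k-5)/4$, which is precisely $\arg \overline{\omega_k}$ with $\omega_k = e((4k-5)/8)$. Combining gives $\gamma(\sigma - it) = \overline{\omega_k}(t/(2\pi))^{5(\sigma-1/2)}(2\pi e/t)^{5it}\{1 + O(1/t)\}$.

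The main technical obstacle is bookkeeping of the Stirling phases, in particular tracking the sign of $\mathrm{Im}(\text{argument})$ separately for numerator and denominator (one factor has imaginary part $-t/2$, the other $+t/2$) and ensuring that the contributions $(a-1/2)(\pm \pi/2)$ coming from $\log(\pm i u)$ add up to exactly $-\pi(2\kappa_j-1)/4$ per factor. The error term $O(1/t)$ is uniform in $\sigma$ on any compact set to the right of $1/2$, coming from the next term in the Stirling expansion, which presents no additional difficulty once the leading behavior is pinned down.
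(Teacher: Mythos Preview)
Your proposal is correct and follows essentially the same approach as the paper. The only presentational differences are that you write the archimedean factor of $L(s,f\times g)$ directly as a product of four $\Gamma_{\mathbb R}$'s (the paper instead starts from the classical form $(2\pi)^{-2s}\Gamma(s+\tfrac{k-\kappa}{2})\Gamma(s+\tfrac{k+\kappa}{2}-1)$ and then applies the Legendre duplication formula to reach the same five-factor expression), and you sketch the Stirling computation for $\gamma(\sigma-it)$ explicitly whereas the paper simply invokes the general computation in Friedlander--Iwaniec.
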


\begin{proof}
First by the functional equation
  \begin{equation*}
  \begin{split}
    \Lambda(s, 1\boxplus(f \times g))& = \pi^{-\frac{s}{2}} \Gamma\left(\frac{s}{2}\right) \zeta(s) \cdot (2 \pi)^{-2s}\Gamma\left(s+\frac{k-\kappa}{2}\right)\Gamma\left(s+\frac{k+\kappa}{2}-1 \right) L(s, f \times g)\\
    & = \varepsilon(1\boxplus(f\times g))\Lambda(1-s, 1\boxplus(f \times g)) = \varepsilon(f\times g)\Lambda(1-s, 1\boxplus(f \times g)) ,
  \end{split}
  \end{equation*}
we can write the functional equation as follows,
$$
\begin{aligned}
&L(1-s, 1 \boxplus f \times g) \\
=&\frac{1}{\varepsilon(f\times g)} \frac{\pi^{-\frac{s}{2}} \Gamma\left(\frac{s}{2}\right)(2 \pi)^{-2 s} \Gamma\left(s+\frac{k-\kappa}{2}\right)\Gamma\left(s+\frac{k+\kappa}{2}-1 \right)}{\pi^{-\frac{1-s}{2}} \Gamma\left(\frac{1-s}{2}\right)(2 \pi)^{-2+2s}\Gamma\left(1-s+\frac{k-\kappa}{2}\right)\Gamma\left(1-s+\frac{k+\kappa}{2}-1 \right)} L(s, 1 \boxplus f \times g) \\
=&\frac{1}{\varepsilon(f\times g)} \gamma(s) L(s, 1 \boxplus (f \times g)),
\end{aligned}
$$
where
$$
\gamma(s)=\pi^{\frac{1}{2}-s}(2 \pi)^{2-4 s}\frac{\Gamma\left(\frac{s}{2}\right)}{\Gamma\left(\frac{1-s}{2}\right)} \frac{\Gamma\left(s+\frac{k-\kappa}{2}\right)}{\Gamma\left(1-s+\frac{k-\kappa}{2}\right)} \frac{\Gamma\left(s+\frac{k+\kappa}{2}-1 \right)}{\Gamma\left(1-s+\frac{k+\kappa}{2}-1 \right)}.
$$
Note that $\Gamma(z) \Gamma\left(z+\frac{1}{2}\right)=2^{1-2 z} \pi^{\frac{1}{2}} \Gamma(2 z)$. Taking $z=\frac{s+\frac{k-\kappa}{2}}{2}$, $z=\frac{s+\frac{k+\kappa}{2}-1}{2}$ respectively, we obtain
$$
\Gamma\left(s+\frac{k-\kappa}{2}\right)=2^{s+\frac{k-\kappa}{2}-1} \pi^{-\frac{1}{2}} \Gamma\left(\frac{s+\frac{k-\kappa}{2}}{2}\right)\Gamma\left(\frac{s+\frac{k-\kappa}{2}+1}{2}\right),
$$
$$
\Gamma\left(s+\frac{k+\kappa}{2}-1\right)=2^{s+\frac{k+\kappa}{2}-2} \pi^{-\frac{1}{2}} \Gamma\left(\frac{s+\frac{k+\kappa}{2}-1}{2}\right)\Gamma\left(\frac{s+\frac{k+\kappa}{2}}{2}\right).
$$
With the notation of $\gamma(s)$ , we derive
$$
\gamma(s)
=\left(\pi^{-5}\right)^{s-\frac{1}{2}} \prod_{j=1}^{5} \Gamma\left(\frac{s+\kappa_{j}}{2}\right) \Gamma\left(\frac{1-s+\kappa_{j}}{2}\right)^{-1},
$$
with $\kappa_{1}=0, \kappa_{2}=\frac{k-\kappa}{2},
\kappa_{3}=\frac{k-\kappa}{2}+1, \kappa_{4}=\frac{k+\kappa}{2}-1,
\kappa_{5}=\frac{k+\kappa}{2}$. Hence,
by the argument of Friedlander-Iwaniec \cite[Section 1]{Fri-Iwa}, we complete the proof of Lemma.
\end{proof}
Take $s=1+\varepsilon-i t$. Lemma \ref{lemma:functional equation} yields
\begin{equation}\label{apply lemma}
  L(-\varepsilon+i t, 1 \boxplus (f \times g))=
  \frac{\overline{\omega_k} }{\varepsilon(f\times g)}
  \left(\frac{t}{2\pi}\right)^{\frac{5}{2}+5\varepsilon}\left(\frac{t}{2\pi e}\right)^{-5 i t} L(1+\varepsilon-i t,1 \boxplus (f \times g))\left\{1+O\left(\frac{1}{t}\right)\right\}.
\end{equation}

\begin{proof}[Proof of Corollary \ref{cor:main}]

We first approximate $\sum_{n \leq X}\lambda_{1\boxplus(f\times g)}(n)$ by a smooth sum. Let
$$
Y=X^{2/3-\delta} \quad \text { for some } \delta \in (0, 2/39).
$$
Let $W$ be a smooth function supported on $[1/2 - Y/X, 1 + Y/X]$
such that $W(u)=1$, $u \in [1/2, 1]$ and $W(u) \in [0 , 1]$,
$u \in [1/2 - Y/X, 1/2] \cup [1,1+Y / X]$, and $W^{(m)}(u) \ll_ m(X / Y)^{m}$ for all $m \geq 1$.
Then
\begin{equation}\label{sum of lambda 1}
\begin{aligned}
\sum_{X / 2<n \leq X} \lambda_{1\boxplus(f\times g)}(n)=& \sum_{X / 2-Y<n<X+Y} \lambda_{1\boxplus(f\times g)}(n) W\left(\frac{n}{X}\right) \\
&+O\left(\sum_{X / 2-Y<n<X / 2}\left|\lambda_{1\boxplus(f\times g)}(n)\right|+\sum_{X<n<X+Y}\left|\lambda_{1\boxplus(f\times g)}(n)\right|\right) \\
=& \sum_{n \geq 1} \lambda_{1\boxplus(f\times g)}(n) W\left(\frac{n}{X}\right)+O\left(X^{2/3-\delta+\varepsilon}\right),
\end{aligned}
\end{equation}
where we have used Deligne's bound
$\lambda_{1\boxplus(f\times g)}(n) =
\sum_{lm^2r=n}\lambda_f(r)\lambda_g(r)\ll n^{\varepsilon}$.
Thus we only need to show
\begin{equation}\label{sum of lambda 2}
  \sum_{n \geq 1} \lambda_{1\boxplus(f\times g)}(n) W\left(\frac{n}{X}\right)=L(1, f\times g) \tilde{W}(1) X + O\left(X^{2/3-\delta + \varepsilon}\right),
\end{equation}
where $\tilde{W}(s)=\int_{0}^{\infty} W(x) x^{s-1} \mathrm{d} x$ is
the Mellin transform of $W(x)$ and $\tilde{W}(1)= 1/2 + O(Y/X)$.
By breaking the sum into dyadic intervals and plugging \eqref{sum of lambda 2}
into \eqref{sum of lambda 1}, we get
$$
\begin{aligned}
\sum_{n \leq X} \lambda_{1\boxplus(f\times g)}(n) =& 2 L(1, f\times g) \tilde{W}(1) X + O\left(X^{2/3 - \delta + \varepsilon}\right)\\
=&L(1, f\times g)X + O\left(X^{2/3 - \delta + \varepsilon}\right).
\end{aligned}
$$
Now we consider the sum $\sum_{n \geq 1}\lambda_{1\boxplus(f\times g)}(n)W\left(\frac{n}{X}\right)$ in \eqref{sum of lambda 2}. By the Mellin inversion formula
$$
W(u)=\frac{1}{2 \pi i} \int_{(2)} \tilde{W}(s) u^{-s} \mathrm{d} s,
$$
 we get
$$
\sum_{n \geq 1}\lambda_{1\boxplus(f\times g)}(n)W\left(\frac{n}{X}\right)= \frac{1}{2 \pi i} \int_{(2)} \tilde{W}(s) L(s, 1 \boxplus (f \times g)) X^{s} \mathrm{d}s.
$$
Next we move the integration to the parallel segment with $\mathrm{Re}( s ) = -\varepsilon$.
Note that inside the contour the integrand has only a
simple pole at $s = 1$ with residue $L(1,f \times g)\tilde{W}(1)X$, since $L(s,1\boxplus(f \times g))= \zeta(s)L(s,f \times g)$.
Hence,
\begin{equation}\label{move integral}
  \sum_{n \geq 1} \lambda_{1\boxplus(f\times g)}(n) W\left(\frac{n}{X}\right)
  =L(1, f\times g) \tilde{W}(1) X + \frac{1}{2 \pi i}\int_{(-\varepsilon)}\tilde{W}(s)
  L(s, 1 \boxplus (f \times g)) X^{s} \mathrm{d}s.
\end{equation}
Let
$$
I(X):= \frac{1}{2 \pi i}\int_{(-\varepsilon)}\tilde{W}(s)
L(s, 1 \boxplus (f \times g)) X^{s} \mathrm{d}s.
$$
Inserting a dyadic smooth partition of unity to the $t$-integral, we get
\begin{equation}\label{I(X)dyadic}
 I(X)=\sum_{T \text { dyadic }} I(X, T),
\end{equation}
where
$$
I(X, T):=\frac{X^{-\varepsilon}}{2 \pi} \int_{\mathbb{R}} X^{i t} \tilde{W}(-\varepsilon+i t) L(-\varepsilon+i t, 1 \boxplus (f \times g)) V\left(\frac{t}{T}\right) \mathrm{d} t
$$
for some fixed compactly supported function $V$. For $\tilde{W}(s)$,
by applying integration by parts, we have, for any $m \geq 1$
\begin{equation}\label{tilde{W}(s)}
 \tilde{W}(s)=\frac{(-1)^{m}}{s(s+1) \cdots(s+m-1)} \int_{0}^{\infty} W^{(m)}(u) u^{s+m-1} \mathrm{d} u \ll_m \frac{1}{|s|^{m}}\left(\frac{X}{Y}\right)^{m-1},
\end{equation}
since supp $W^{(m)}\subset [1/2 - Y/X, 1/2] \cup [1,1 + Y /X]$.
By \eqref{tilde{W}(s)}, one finds that the contribution
from the $t$-integral of $I(X, T )$ is negligible if
$t \gg X^{1+\varepsilon}/Y$. In addition, by
the upper bound $L(-\varepsilon+i t, 1 \boxplus (f \times g))
\ll (1+ t)^{5/2+\varepsilon}$ which follows from Lemma \ref{lemma:functional equation} and the Phragm\'{e}n \textendash Lindel\"{o}f principle and by \eqref{tilde{W}(s)} with $m=1$, we deduce that
$$
I(X,T)\ll X^{\varepsilon}T^{5/2 +\varepsilon}\ll Y
$$
if $T\ll Y^{2/5-\varepsilon}$.
Thus, up to a negligible error, we only need to consider
those $T$ in \eqref{I(X)dyadic} in the range $ Y^{2/5-\varepsilon}\ll T \ll X^{1+\varepsilon}/Y$.
And we only consider positive $T$'s, since negative $T$'s can be handled
similarly. Next, for $I(X, T )$, by the first equality in \eqref{tilde{W}(s)} with $m = 1$, we get
\begin{equation}\label{I(X,T)}
\begin{aligned}
I(X, T) &=-\frac{X^{-\varepsilon}}{2 \pi} \int_{1 / 3}^{3} W^{\prime}(u) u^{-\varepsilon} \int_{\mathbb{R}} \frac{(X u)^{i t}}{-\varepsilon+i t} L(-\varepsilon+i t, 1\boxplus(f\times g)) V\left(\frac{t}{T}\right) \mathrm{d} t~\mathrm{d} u \\
& \ll \frac{X^{-\varepsilon}}{T} \sup _{u \in [1/3,3]} \left| \int_{\mathbb{R}}(X u)^{i t} L(-\varepsilon+i t, 1\boxplus(f\times g)) V_1\left(\frac{t}{T}\right) \mathrm{d} t \right|.
\end{aligned}
\end{equation}
Hence, in the following, we only need to estimate
\begin{equation}\label{J(X, T)}
 J(X, T):=\int_{\mathbb{R}} X^{i t} L(-\varepsilon+i t, 1\boxplus(f\times g)) V_1\left(\frac{t}{T}\right) \mathrm{d} t.
\end{equation}
We shall apply functional equation for $L(-\varepsilon+i t, 1\boxplus(f\times g))$
to change the variable $s=-\varepsilon+i t$ into $1-s=1+\varepsilon-i t$.
By inserting the functional equation \eqref{apply lemma} into \eqref{J(X, T)}, we have
$$
\begin{aligned}
J(X, T)=& \int_{\mathbb{R}} X^{i t} \frac{1}{\varepsilon(f\times g)} \overline{\omega_{k}} \cdot\left(\frac{t}{2 \pi}\right)^{5\left(\frac{1}{2}+\varepsilon\right)}\left(\frac{t}{2 \pi e}\right)^{-5 i t} L(1+\varepsilon-i t,1\boxplus(f\times g)) V_1\left(\frac{t}{T}\right) \mathrm{d} t \\
&+O\left(\frac{1}{T} \cdot T^{5 / 2+\varepsilon} \cdot T\right)\\
\ll & T^{5 / 2+\varepsilon}\left|\int_{\mathbb{R}}\sum_{n\geq 1}\frac{\lambda_{1\boxplus(f\times g)}(n)}{n^{1+\varepsilon-i t}}X^{i t}\left(\frac{t}{2 \pi e}\right)^{-5 i t} V_2\left(\frac{t}{T}\right)\mathrm{d} t\right| +T^{5 / 2+\varepsilon}
\end{aligned}
$$
for some smooth compactly supported function $V_2$.

Exchanging the order of the integration and summation above,
and making a change of variable $\frac{t}{T}\rightarrow \xi$, we get
$$
\begin{aligned}
J(X,T)\ll & T^{5 / 2+\varepsilon}\left|\sum_{n\geq 1}
\frac{\lambda_{1\boxplus(f\times g)}(n)}{n^{1+\varepsilon}}\int_{\mathbb{R}}
(nX)^{i t}\left(\frac{t}{2 \pi e}\right)^{-5 i t}
_2\left(\frac{t}{T}\right)\mathrm{d} t\right| + T^{5 / 2+\varepsilon} \\
\ll & T^{7 / 2+\varepsilon}\left|\sum_{n\geq 1}
\frac{\lambda_{1\boxplus(f\times g)}(n)}{n^{1+\varepsilon}}\int_{\mathbb{R}}
e^{i T \xi \log (nX({\frac{2 \pi e}{T\xi}})^{-5})} V_2(\xi)\mathrm{d}\xi\right|
+ T^{5 / 2+\varepsilon}.
\end{aligned}
$$
Let $h(\xi):= T \xi \log (nX({\frac{2 \pi e}{T\xi}})^{-5})$,
then $h^{\prime}(\xi)= 5T\log\frac{2\pi (nX)^{\frac{1}{5}}/T}{\xi}$, $h^{(j)}(\xi)= (-1)^{j-1}(j-2)!\frac{5T}{\xi^{j-1}}$ for $j \geq 2$.
If $2\pi(nX)^{1/5}/T \notin \supp V_{2} $, it is not difficult
to see that $h^{\prime}(\xi) \gg T^{\varepsilon}$.
Applying Lemma 3.6 (1), we have the integral over $\xi$ is
$O(T^{-2021})$.
Now for the above integral over $\xi$, we consider the case $2\pi(nX)^{1/5}/T \in \supp V_2$.
Note that the stationary point is $\xi_{0}=\frac{2 \pi(n X)^{1 / 5}}{T}$, $h\left(\xi_{0}\right)=5 T \xi_{0}$, $h^{\prime \prime}\left(\xi_{0}\right)=-\frac{5 T}{\xi_{0}} \asymp T$ and $V^{(j)}(\xi)\ll_j 1$ for $j \geq 0$, $h^{(j)}\left(\xi_{0}\right)\asymp T$ for $j \geq 2$.
Applying Lemma 3.6 (2)
with $Y=Z=1$ and $H=R=T$, we obtain
$$
\begin{aligned}
\int_{\mathbb{R}} V_{2}(\xi) e^{i T \xi \log \left(n X\left(\frac{T \xi}{2 \pi e}\right)^{-5}\right)} \mathrm{d} \xi &=\frac{e^{i h\left(\xi_{0}\right)}}{T^{1 / 2}} W_{1}\left(\xi_{0}\right)+O\left(\frac{1}{T^{2021}}\right) \\
&=\frac{e\left(5(n X)^{1 / 5}\right)}{T^{1 / 2}} W_{2}\left(\frac{n}{T^{5} / X}\right)+O\left(\frac{1}{T^{2021}}\right),
\end{aligned}
$$
for some inert functions $W_{1}$, $W_{2}$.
Consequently,
\begin{equation}\label{after stationary}
\begin{aligned}
J(X, T) & \ll T^{3+\varepsilon}\left|\sum_{n \geq 1} \frac{\lambda_{1\boxplus(f\times g)}(n)}{n^{1+\varepsilon}} e\left(5(n X)^{1 / 5}\right) W_{2}\left(\frac{n}{T^{5} / X}\right)\right|+T^{5 / 2+\varepsilon} \\
& \ll \frac{X^{1+\varepsilon}}{T^2}\left|\sum_{n \geq 1} \lambda_{1\boxplus(f\times g)}(n) e\left(5(n X)^{1 / 5}\right) W_{3}\left(\frac{n}{T^{5} / X}\right)\right|+T^{5 / 2+\varepsilon}
\end{aligned}
\end{equation}
for some inert function $W_3$.
Note that
\begin{equation}\label{T range}
 X^{1/5+\varepsilon} \ll Y^{2/5+\varepsilon} \ll T \ll \frac{X^{1+\varepsilon}}{Y}.
\end{equation}
So the above sum over $n$ is non-empty. Combining \eqref{I(X)dyadic}, \eqref{I(X,T)}, \eqref{J(X, T)} and \eqref{after stationary}, we have
\begin{equation}\label{I(X) to exp sum}
 I(X)\ll \sum_{T ~{\rm{dyadic}} \atop Y^{2/5+\varepsilon} \ll T \ll \frac{X^{1+\varepsilon}}{Y}}\left(\frac{X^{1+\varepsilon}}{T^{3}}\left|\sum_{n \geq 1} \lambda_{1 \boxplus(f\times g)}(n) e\left(5(n X)^{1 / 5}\right) W\left(\frac{n}{T^{5} / X}\right)\right|+T^{3/2+\varepsilon}\right).
\end{equation}
Here $X$ on the right-hand side of \eqref{I(X) to exp sum} should be
understood as the original $X u$ in \eqref{I(X,T)} with $u \in [1/3 , 3]$,
and $W$ is a smooth compactly supported function with $\supp W \in [1 / 4,4]$.
So we only need to consider the case $n \asymp T^{5} / X$.
Now we make use of the fact that
$\lambda_{1\boxplus(f\times g)}(n) = \sum_{lm^2r=n}\lambda_f(r)\lambda_g(r)$.
Inserting dyadic partitions to the $l$-sum and $m$-sum and making a
smooth partition of unity into dyadic segments to the $r$-sum, we arrive at
$$
I(X)\ll \sum_{T ~{\rm{dyadic}} \atop Y^{2/5+\varepsilon} \ll T \ll \frac{X^{1+\varepsilon}}{Y}}\left(\frac{X^{1+\varepsilon}}{T^{3}}
\sup_{L,M,R \gg 1 \atop LM^2R \asymp T^5/X}|B(L,M,R)| + T^{3/2+\varepsilon}\right),
$$
where
$$
B(L,M,R):=\sum_{l \sim L} \sum_{m \sim M} \sum_{r \geq 1}\lambda_{f}(r)\lambda_{g}(r) e\left(5(l m^2 r X)^{1 / 5}\right) V\left(\frac{r}{R}\right).
$$

We distinguish two cases.

{\textbf{Case 1.}} $L \gg T^{593 / 345}M^{-194 / 207}X^{-97 / 207}$.
We rewrite $B(L,M,R)$ as
$$
B(L, M, R)=\sum_{m \sim M} \sum_{r \geq 1} \lambda_{f}(r)\lambda_{g}(r) V\left(\frac{r}{R}\right)\left(\sum_{l \sim L }  e\left(5(l m^2 r X)^{1 / 5}\right)\right).
$$
For the inner sum over $l$, we apply the method of exponent pairs with A-process
(see for example \cite[Chapter 3]{GrahamKolesnik}), by taking the exponent pair $(p, q)$ as
$$
(p, q)=\left(\frac{k}{2 k+2}, \frac{k+h+1}{2 k+2}\right)=
\left(\frac{13}{194}+\varepsilon, \frac{76}{97}+\varepsilon\right),
$$
where $(k, h)=\left(\frac{13}{84}+\varepsilon, \frac{55}{84}+\varepsilon\right)$
is an exponent pair according to Bourgain \cite[Theorem 6]{Bourgain}. Hence,
\begin{equation}\label{B(L, M, R)1}
\begin{aligned}
B(L, M, R)& \ll \sum_{m \sim M} \sum_{r \geq 1} \left|\lambda_{f}(r)\lambda_{g}(r) V\left(\frac{r}{R}\right)\right| \left|\sum_{l \sim L }  e\left(5(l m^2 r X)^{1 / 5}\right)\right|\\
& \ll T^{\varepsilon} M R \cdot(T / L)^{p} L^{q}\\
&\ll T^{983/194+\varepsilon}X^{-1+\varepsilon}M^{-1+\varepsilon}L^{-55/194+\varepsilon}\\
& \ll T^{316 /69+\varepsilon} M^{-152 / 207+\varepsilon} X^{-359 / 414+\varepsilon}\\
& \ll T^{316 /69+\varepsilon} X^{-359 / 414+\varepsilon}.
\end{aligned}
\end{equation}
In the last inequality we have used the fact $M\gg1$.

{\textbf{Case 2.}} $L \ll T^{593/ 345}M^{-194 / 207}X^{-97 / 207}$.
We rewrite $B(L, M, R)$ as
\begin{equation}\label{B(L, M, R)2}
\begin{aligned}
B(L, M, R)& =\sum_{l \sim L}\sum_{m \sim M} \left(\sum_{r \geq 1}\lambda_{f}(r)\lambda_{g}(r) e\left(5(l m^2 r X)^{1 / 5}\right) V\left(\frac{r}{R}\right) \right)\\
& \ll \sum_{l \sim L}\sum_{m \sim M} \left|\sum_{r \geq 1}\lambda_{f}(r)\lambda_{g}(r) e\left(5T(\frac{r}{R})^{1 / 5}\right) V\left(\frac{r}{R}\right) \right|.
\end{aligned}
\end{equation}
In order to apply Theorem 1.1, we need to verify that $R$ satisfies
the condition $R \ll T^{12/5}$.
Note that $R \ll LM^2R \asymp T^5/X$ and $Y^{2/5+\varepsilon} \ll T \ll \frac{X^{1+\varepsilon}}{Y}
= X^{1/3+\delta+\varepsilon}$.
Since we assume $\delta < 2/39 $, we have $T\ll X^{5/13}$ and hence $R\ll T^5/X \ll T^{12/5}$.
Therefore, by Theorem 1.1, we have
$$
\begin{aligned}
B(L, M, R) &\ll L M T^{\frac{2}{5}}R^{\frac{3}{4}+\varepsilon} \asymp L^{1/4+\varepsilon}M^{-1/2+\varepsilon}T^{83/20+\varepsilon}X^{-3/4+\varepsilon}\\
&\ll T^{316 /69+\varepsilon} M^{-152 / 207+\varepsilon} X^{-359 / 414+\varepsilon}\\
&\ll T^{316 /69+\varepsilon}X^{-359 / 414+\varepsilon}.
\end{aligned}
$$
Combining \eqref{I(X) to exp sum}, \eqref{B(L, M, R)1} and \eqref{B(L, M, R)2},
we have
\begin{equation}\label{I(X)1}
\begin{aligned}
I(X)
&\ll \sum_{T ~{\rm{dyadic}} \atop Y^{2/5+\varepsilon} \ll T \ll  X^{1 /3+\delta+\varepsilon}}\left(\frac{X^{1+\varepsilon}}{T^{3}} \cdot T^{316 /69+\varepsilon} X^{-359 / 414+\varepsilon} + T^{3 / 2+\varepsilon}\right)\\
& \ll \sum_{T ~{\rm{dyadic}} \atop Y^{2/5+\varepsilon} \ll T \ll  X^{1 /3+\delta+\varepsilon}} \left(X^{55 / 414 + \varepsilon} T^{109 /69+\varepsilon}  + T^{3 / 2 + \varepsilon}\right)  \\
& \ll X^{\frac{109}{69}\delta + \frac{91}{138} +\varepsilon}.
\end{aligned}
\end{equation}
Finally, putting together the above estimates \eqref{sum of lambda 1},
\eqref{move integral} and \eqref{I(X)1}, we conclude that
$$
\sum_{X / 2<n \leq X} \lambda_{1\boxplus(f\times g)}(n)= L(1, f\times g) \tilde{W}(1) X +
O\left(X^{\frac{109}{69}\delta + \frac{91}{138}+ \varepsilon}\right) +
O\left(X^{2/3 - \delta + \varepsilon}\right).
$$
So we complete the proof of Corollary \ref{cor:main} by taking $\delta \leq 1/356$.
\end{proof}

\section{Estimation of integrals}\label{proofs-of-technical-lemma}

We first prove Lemma  \ref{integral:lemma-0}.
\begin{proof}[Proof of Lemma \ref{integral:lemma-0}]

By \eqref{I-change-0}, we write
\bna
\mathfrak{I}(m,n,q)
=2\int_0^\infty y\widetilde{V}(y^2)
e\left(t\varphi(y^2)+By-Dy\right)\mathrm{d}y,
\ena
where
\bea\label{BD}
B=2q^{-1}(nX)^{1/2}\asymp \sqrt{XN_1}/C, \qquad
D=2q^{-1}(mX)^{1/2}\asymp (MX)^{1/2}/C.
\eea
Recall the range of $N_1$ in \eqref{M-N1-range} that $N_1\asymp X\Xi^2/Q^2$.
Thus for $X^{1+\varepsilon}\Xi/(Qt) \leq C\ll Q$, we have
\bea\label{B upper bound}
B\ll \frac{X^{1+\varepsilon}\Xi}{CQ}\ll X^{-\varepsilon}t.
\eea
Therefore, the integral $\mathfrak{I}(m,n,q)$ is negligibly small unless
$D\asymp t$.

Assume
\bea\label{phi assumption}
(\varphi(y^2))'=cy^{-\beta} \qquad \text{with}\quad \beta\neq 0,
\eea
where $c>0$ is an absolute constant, i.e.,
\bea\label{phi assumption-2}
\varphi(y)=\frac{c}{2}\log y+c_1\qquad \text{or}\qquad
\varphi(y)=\frac{c}{1-\beta}y^{(1-\beta)/2}+c_2
\quad \text{with}\; \beta\neq 0,1,
\eea
where $c_i\in \mathbb{R}, i=1,2$, are absolute constants. Without loss of generality,
we further assume
$c_i=0,i=1,2$.
Let $\rho(y)=t\varphi(y^2)+By-Dy$.
Then
\bna
\rho'(y)&=&cty^{-\beta}+B-D,\\
\rho^{(j)}(y)&=&t\big(\varphi(y^2)\big)^{(j)}, \quad j=2,3,\ldots.
\ena
The stationary point $y_*$ which is the solution to
the equation $\rho'(y)=cty^{-\beta}+B-D$ is $y_*=\left(\frac{ct}{D-B}\right)^{1/\beta}$.
Denote
\bna
C_{\alpha}^j=\frac{\alpha(\alpha-1)\cdot\cdot\cdot (\alpha-j+1)}{j!}.
\ena
Then by the Taylor series approximation, $y_*$ can be written as
\bea\label{stationary point}
y_*&=&\left(\frac{ct}{D}\right)^{1/\beta}\bigg(1+ \sum_{j=1}^{K_1}
C_{-1/\beta}^{j}\left(\frac{-B}{D}\right)^j
+O_{\beta,K_1}\left(\frac{B^{K_1+1}}{t^{K_1+1}}\right) \bigg)  \nonumber\\
&:=&y_0\left(1+\sum_{j=1}^{K_1}y_j
+O_{c,\beta,K_1}\left(\frac{B^{K_1+1}}{t^{K_1+1}}\right) \right),
\eea
where here and after, $K_j\geq 1$, $j=1,2,3\ldots,$ denote integers, and
\bna
y_0&=&\left(\frac{ct}{D}\right)^{1/\beta} \asymp 1,\\
y_{j}&=&C_{-1/\beta}^{j}\left(\frac{-B}{D}\right)^j
\asymp \left(\frac{B}{t}\right)^{j}.
\ena
By \eqref{B upper bound}, the
$O$-term in \eqref{stationary point} is $O(N^{-\varepsilon K_1 })$, which
can be arbitrarily small by taking $K_1$ sufficiently large.

Note that $\rho^{(j)}(y)\asymp t$ for any integer $j\geq 1$.
Recall $\widetilde{V}^{(j)}(y)\ll_j \triangle^j$,
where $\triangle<t^{1-\varepsilon}$ (see \eqref{derivative-of-V}).
To make sure that the stationary phase analysis is applicable to the integral
$\mathfrak{I}(M\xi,n,q)$, we assume $\triangle$ satisfies
\bea\label{assumption-on-Delta}
\triangle<t^{1/2-\varepsilon}
\eea
Now applying Lemma \ref{lemma:exponentialintegral} with $Z=1$, $Y=\triangle$,
$H=t$ and $R=H/X^2\gg t^{\varepsilon}$, we have
\bna
\mathfrak{I}(m,n,q)
=\frac{e(\rho(y_*))}{\sqrt{2\pi \rho''(y_*)}}
  G(y_*) + O_{A}(  t^{-A}),
\ena
for any $A>0$, where $G(y)$ is some inert function
supported on $y\asymp 1$. From \eqref{phi assumption-2}, \eqref{stationary point} and
using Taylor series approximation, we have
\bna
\rho(y_*)&=&t\varphi(y_*^2)+By_*-Dy_*\\
&=&t\varphi(y_0^2)-Dy_0+By_0+\frac{y_0^2}{2c\beta^2}\frac{B^2}{t}+B\sum_{j=2}^{K_2}g_{c,\beta,j}\left(y_0\right)
\left(\frac{B}{t}\right)^j+O_{c,\beta,K_2}\left(\frac{B^{K_2+2}}{t^{K_2+1}}\right)
\ena
and
\bna
\rho''(y_*)=-c\beta ty_*^{-\beta-1}
=-c\beta ty_0^{-\beta-1}+B(\beta+1)y_0^{-1}+
B\sum_{j=1}^{K_3}h_{c,\beta,j}\left(y_0\right)
\left(\frac{B}{t}\right)^j+O_{c,\beta,K_3}\left(\frac{B^{K_3+2}}{t^{K_3+1}}\right)
\ena
for some functions $g_{c,\beta, j}(x)$, $h_{c,\beta, j}(x)$ of polynomially growth,
depending only on $c,\beta,j$, and supported on $x\asymp 1$.
Note that $\rho''(y_*)\asymp t$.
Hence,
\bna
\mathfrak{I}(m,n,q)
&=&\frac{1}{\sqrt{t}}G_{\natural}(y_*)
e\left(t\varphi(y_0^2)-Dy_0+By_0+\frac{y_0^2}{2c\beta^2}\frac{B^2}{t}\right)\nonumber\\
&&\qquad\times e\left(B\sum_{j=2}^{K_2}g_{c,\beta,j}\left(y_0\right)
\left(\frac{B}{t}\right)^j\right) + O_{A}(t^{-A}),
\ena
where $ G_{\natural}(y)=\left(t/(2\pi \rho''(y))\right)^{1/2}
G(y)$ satisfies $G_{\natural}^{(j)}(y)\ll_j 1$.
This finishes the proof of the lemma.

\end{proof}

Next we prove Lemma \ref{integral:lemma}.
\begin{proof}[Proof of Lemma \ref{integral:lemma}]
The proof is similar to \cite[Lemma 4.3]{LS}.
Recall \eqref{H-integral} which we relabel as
\bea\label{H-relabel}
\mathcal{H}(x)=\int_{\mathbb{R}}
\omega\left(\xi\right)
\mathfrak{I}^*\left(M\xi,n_1,q\right)
\overline{\mathfrak{I}^*\left(M\xi,n_2,q\right)}
\, e\left(-x\xi\right)\mathrm{d}\xi,
\eea
where by \eqref{I*},
\bea\label{I*-2}
\mathfrak{I}^*(M\xi,n,q)
=\frac{1}{\sqrt{t}}G_{\natural}(y_*)
e\left(By_0+\frac{y_0^2}{2c\beta^2}\frac{B^2}{t}+B\sum_{j=2}^{K_2}g_{c,\beta,j}\left(y_0\right)
\left(\frac{B}{t}\right)^j\right) + O_{A}(t^{-A}).
\eea
Here $y_0, y_*$ are as in \eqref{stationary point}, $G_{\natural}(x)$ is some inert function
supported on $x\asymp 1$, $B=2q^{-1}(nX)^{1/2}$ is defined in \eqref{BD} and $g_{c,\beta, j}(x)$ some polynomial function
depending only on $c,\beta,j$.
Trivially, one has
\bna
\mathcal{H}(x)\ll t^{-1}.
\ena
This proves the first statement of Lemma \ref{integral:lemma}.

Plugging \eqref{I*-2} into \eqref{H-relabel}, we obtain
\bna
&&\mathcal{H}(x)=\frac{1}{t}
\int_{\mathbb{R}}
\omega\left(\xi\right)G_{\natural}(y_*)\overline{G_{\natural}(y_*')}
e\left(-x\xi+(B-B')\widetilde{y}_0\xi^{-1/(2\beta)}+
(B^2-B'^2)\frac{\widetilde{y}_0^2}{2c\beta^2t}\xi^{-1/\beta}\right)\\
&&\qquad\qquad\times e\left(\sum_{j=2}^{K_2}g_{c,\beta,j}(\widetilde{y}_0\xi^{-1/(2\beta)})
\bigg(B\bigg(\frac{B}{t}\bigg)^j
-B'\bigg(\frac{B'}{t}\bigg)^j\bigg)\right)
\mathrm{d}\xi + O_{A}( t^{-A}),
\ena
where $\widetilde{y}_0=y_0\xi^{1/(2\beta)}=(ct/\widetilde{D})^{1/\beta}\asymp 1$ with
$\widetilde{D}=D\xi^{-1/2}=2q^{-1}(MX)^{1/2}$ is defined in \eqref{BD},
$y_0, y_*$ are as in \eqref{stationary point},
and $B$ is defined in \eqref{BD} and $B'$ is defined in the same way but with
$n_1$ replaced by $n_2$. Note that the first derivative of
the phase function in the above integral equals
\bea\label{1st phase function}
&&-x-\frac{1}{2\beta}(B-B')\widetilde{y}_0\xi^{-1/(2\beta)-1}-\frac{1}{\beta}
(B^2-B'^2)\frac{\widetilde{y}_0^2}{2c\beta^2t}\xi^{-1/\beta-1}\nonumber\\&&
-\frac{1}{2\beta}\widetilde{y}_0\xi^{-1/(2\beta)-1}\sum_{j=2}^{K_2}g'_{c,\beta,j}(\widetilde{y}_0\xi^{-1/(2\beta)})
\bigg(B\bigg(\frac{B}{t}\bigg)^j
-B'\bigg(\frac{B'}{t}\bigg)^j\bigg)
\eea
which is $\gg |x|\gg X^{\varepsilon}$
if $|x|\gg X^{\varepsilon}\sqrt{XN_1}/C\asymp X^{1+\varepsilon}\Xi/(CQ)$
since $B, B'\asymp \sqrt{XN_1}/C$ and $N_1\asymp  X\Xi^2/Q^2$ in \eqref{M-N1-range}.
Then repeated integration by parts shows that
the contribution from $x\gg  X^{1+\varepsilon}\Xi/(CQ)$ is negligible.
Thus the second statement of Lemma \ref{integral:lemma} is clear.

Moreover, if $-1/(2\beta)-1\neq 0$, i.e., $\beta\neq -1/2$ or equivalently,
$\varphi(x)\neq cx^{3/4}$, the second term in \eqref{1st phase function}
is of size
\bna
|B-B'|=\frac{2N^{1/2}}{q}|n_1^{1/2}-n_2^{1/2}|
\asymp \frac{X^{1/2}}{CN_1^{1/2}}|n_1-n_2|\asymp \frac{Q}{C\Xi}|n_1-n_2|
\ena
since $N_1\asymp  X\Xi^2/Q^2$. Thus repeated integration by parts shows that
$\mathcal{H}(x)$ is negligibly small unless $|x|\asymp \frac{Q}{C\Xi}|n_1-n_2|$.
Now by applying the second derivative test
in Lemma \ref{lem: 2st derivative test, dim 1}, we infer that for $x\neq 0$ and
$\varphi(x)\neq cx^{3/4}$,
\bna
\mathcal{H}(x)\ll t^{-1} |x|^{-1/2}.
\ena
This proves (3).

Finally, for $x=0$, using the identity $a^{j+1}-b^{j+1}=(a-b)(a^j+a^{j-1}b+\cdots+ab^{j-1}+b^j)$
and \eqref{B upper bound}, one sees that, for $j\geq 1$,
\bna
B\bigg(\frac{B}{t}\bigg)^j
-B'\bigg(\frac{B'}{t}\bigg)^j&=&(B-B')\left(\bigg(\frac{B}{t}\bigg)^j+
\bigg(\frac{B}{t}\bigg)^{j-1}\frac{B'}{t}+\cdots+\frac{B}{t}\bigg(\frac{B'}{t}\bigg)^{j-1}
+\bigg(\frac{B'}{t}\bigg)^j
\right)\\
&\ll& |B-B'|X^{-\varepsilon}.
\ena
Thus the first derivative
of the phase function in \eqref{1st phase function} is
\bna
\gg |B-B'|\asymp \frac{Q}{C\Xi}|n_1-n_2|.
\ena
By repeated integration by parts, $\mathcal{H}(0)$
is negligible small unless $|n_1-n_2|\ll C\Xi N^{\varepsilon}/Q$.
Since $\Xi\ll N^{\varepsilon}$ and $C\ll Q$, we have that
$\mathcal{H}(0)$
is negligibly small unless $|n_1-n_2|\ll N^{\varepsilon}$.
This completes the proof of Lemma \ref{integral:lemma}.
\end{proof}

  \begin{bibdiv}

\begin{biblist}

\bib{ASS20}{article} {
author = {Acharya, Ratnadeep}
    author = {Sharma, Prahlad},
  author = {Singh,  Saurabh Kumar}
     title = {$t$-aspect subconvexity for $\rm GL(2) \times \rm GL(2)$ $L$-function },
   note={\url{arXiv:2011.01172}},
   date={2020},
}

\bib{Agg}{article} {
    author = {Aggarwal, Keshav},
		   title = {A new subconvex bound for {$\rm GL(3)$} $L$-functions in the $t$-aspect},
		   journal={Int. J. Number Theory},
   volume={17},
   date={2021},
   number={5},
   pages={1111--1138},
   doi={10.1142/S1793042121500275},
}

\bib{AHLQ}{article} {
    author = {Aggarwal, Keshav},
    author={Holowinsky, Roman},
    author={Lin, Yongxiao},
    author={Qi, Zhi},
     title = {A Bessel delta-method and exponential sums for {$\rm GL(2)$}},
    journal = {Q. J. Math.},
    volume={71},
   date={2020},
   number={3},
   pages={1143--1168},
    doi = {10.1093/qmathj/haaa026},
}

\bib{BR}{article} {
    author = {Bernstein, Joseph},
    author={Reznikov, Andre},
     title = {Subconvexity bounds for triple $L$-functions
and representation theory},
    journal = {Ann. of Math. (2) },
    volume={172},
   date={2010},
   number={3},
   pages={1679--1718},
}

\bib{BJN}{article}{
   author={Blomer, Valentin},
   author={Jana, Subhajit },
   author={Nelson, Paul},
   title={The Weyl bound for triple product $L$-functions },
   note={\url{arXiv:2101.12106}},
   date={2021},
}

\bib{BKY}{article}{
   author={Blomer, Valentin},
   author={Khan, Rizwanur},
   author={Young, Matthew},
   title={Distribution of mass of holomorphic cusp forms},
   journal={Duke Math. J.},
   volume={162},
   date={2013},
   number={14},
   pages={2609--2644},
   issn={0012-7094},
   doi={10.1215/00127094-2380967},
}

\bib{Bourgain}{article}{
   author={Bourgain, J.},
   title={Decoupling, exponential sums and the {R}iemann zeta function},
   journal={J. Amer. Math. Soc.},
   volume={30},
   date={2017},
   number={1},
   pages={205--224},
}

\bib{C}{article}{
   author={Czarnecki, Kyle},
   title={Resonance sums for Rankin-Selberg products of
$\rm SL_m(\mathbb{Z})$ Maass cusp forms},
   journal={J. Number theory},
   volume={163},
   date={2016},
   pages={359--374},
   doi={10.1016/j.jnt.2015.11.003},
}

\bib{Del}{article}{
   author={Deligne, Pierre},
   title={La conjecture de Weil. I},
   language={French},
   journal={Inst. Hautes \'{E}tudes Sci. Publ. Math.},
   number={43},
   date={1974},
   pages={273--307},
   issn={0073-8301},
}

\bib{Fri-Iwa}{article}{
   author={Friedlander, John B.},
   author={Iwaniec, Henryk},
   title={Summation formulae for coefficients of $L$-functions},
   journal={Canad. J. Math.},
   volume={57},
   date={2005},
   number={3},
   pages={494--505},
   issn={0008-414X},
   doi={10.4153/CJM-2005-021-5},
}

\bib{GrahamKolesnik}{article}{
   author={Graham, S.W.},
   author={Kolesnik, G.},
   title={van der {C}orput's method of exponential sums},
   series={London Mathematical Society Lecture Note Series},
   volume={126},
   publisher={Published for Cambridge University Press, Cambridge},
   date={1991},
}

\bib{HB}{article}{
   author={Huang, Bingrong},
   title={On the Rankin-Selberg problem},
   journal={Math. Ann.},
   date={2021},
   doi={10.1007/s00208-021-02186-7},
}

\bib{HLW}{article}{
   author={Huang, Bingrong},
   author={Lin, Yongxiao},
   author={Wang, Zhiwei},
   title={Averages of coefficients of a class of degree 3 $L$-functions },
   date={2021},
   doi={10.1007/s11139-021-00417-8}
}

\bib{Hux2}{book}{
   author={Huxley, M. N.},
   title={Area, lattice points, and exponential sums},
   series={London Mathematical Society Monographs. New Series},
   volume={13},
   note={Oxford Science Publications},
   publisher={The Clarendon Press, Oxford University Press, New York},
   date={1996},
   pages={xii+494},
   isbn={0-19-853466-3},
}

\bib{IK}{book}{
   author={Iwaniec, Henryk},
   author={Kowalski, Emmanuel},
   title={Analytic number theory},
   series={American Mathematical Society Colloquium Publications},
   volume={53},
   publisher={American Mathematical Society, Providence, RI},
   date={2004},
   pages={xii+615},
   isbn={0-8218-3633-1},
   doi={10.1090/coll/053},
}
		
\bib{ILS}{article}{
   author={Iwaniec, Henryk},
   author={Luo, Wenzhi},
   author={Sarnak, Peter},
   title={Low lying zeros of families of $L$-functions},
   journal={Inst. Hautes \'{E}tudes Sci. Publ. Math.},
   number={91},
   date={2000},
   pages={55--131 (2001)},
   issn={0073-8301},
}

\bib{Jutila}{book}{
   author={Jutila, M.},
   title={Lectures on a method in the theory of exponential sums},
   series={Tata Institute of Fundamental Research Lectures on Mathematics
   and Physics},
   volume={80},
   publisher={Published for the Tata Institute of Fundamental Research,
   Bombay; by Springer-Verlag, Berlin},
   date={1987},
   pages={viii+134},
   isbn={3-540-18366-3},
}

\bib{KP}{article}{
   author={Kaczorowski, J.},
   author={Perelli, A.},
   title={On the structure of the Selberg class. VI. Non-linear twists},
   journal={Acta Arith.},
   volume={116},
   date={2005},
   number={4},
   pages={315--341},
   issn={0065-1036},
}

\bib{KS}{article}{
   author={Kim, Henry H.},
   author={Sarnak, Peter},
   title={Appendix 2 in Functoriality for the exterior square of
$\rm GL_4$ and the symmetric fourth of $\rm GL_2$},
   journal={J. Amer. Math. Soc.},
   volume={16},
   date={2003},
   number={1},
   pages={139--183},
}

\bib{KPY}{article}{
   author={Kiral, Eren Mehmet},
   author={Petrow, Ian},
   author={Young, Matthew P.},
   title={Oscillatory integrals with uniformity in parameters},
   language={English, with English and French summaries},
   journal={J. Th\'{e}or. Nombres Bordeaux},
   volume={31},
   date={2019},
   number={1},
   pages={145--159},
   issn={1246-7405},
}

\bib{KMV}{article}{
   author={Kowalski, E.},
   author={Michel, Ph.},
   author={VanderKam, J.},
   title={Rankin--Selberg $L$-functions in the level aspect},
   journal={Duke Math. J.},
   volume={114},
   date={2002},
   number={1},
   pages={123--191},
   issn={0012-7094},
   doi={10.1215/S0012-7094-02-11416-1},
}

\bib{KMS19}{article} {
author = {Kumar, Sumit}
    author = {Mallesham, Kummari},
  author = {Singh,  Saurabh Kumar}
     title = {Non-linear additive twist of {F}ourier coefficients of
		{$GL(3)$} {M}aass forms},
   note={\url{arXiv:1905.13109}},
   date={2019},
}

\bib{LS}{article}{
   author={Lin, Yongxiao},
   author={Sun, Qingfeng},
   title={Analytic twists of $\rm GL_3 \times \rm GL_2$ automorphic forms},
   journal={Int. Math. Res. Not.},
   date={2021},
   doi={10.1093/imrn/rnaa348},
}

\bib{Mun1}{article}{
   author={Munshi, Ritabrata},
   title={The circle method and bounds for $L$-functions---III: $t$-aspect
   subconvexity for $GL(3)$ $L$-functions},
   journal={J. Amer. Math. Soc.},
   volume={28},
   date={2015},
   number={4},
   pages={913--938},
   issn={0894-0347},
   doi={10.1090/jams/843},
}

\bib{Murty}{article}{
   author={Murty, M. Ram},
   title={On the estimation of eigenvalues of Hecke operators},
   journal={Rocky Mountain J. Math.},
   volume={15},
   date={1985},
   number={2},
   pages={521--533},
   issn={0035-7596},
   doi={10.1216/RMJ-1985-15-2-521},
}

\bib{Ren-Ye-1}{article}{
   author={Ren, XiuMin},
   author={Ye, YangBo},
   title={Resonance of automorphic forms for $\rm{GL}(3)$},
   journal={Transactions of American
mathematical society},
   volume={367},
   date={2015},
   number={3},
   pages={2137--2157},
}

\bib{Ren-Ye}{article}{
   author={Ren, XiuMin},
   author={Ye, YangBo},
   title={Resonance and rapid decay of exponential sums of Fourier
   coefficients of a Maass form for $\rm{GL}_m(\Bbb{Z})$},
   journal={Sci. China Math.},
   volume={58},
   date={2015},
   number={10},
   pages={2105--2124},
   issn={1674-7283},
   doi={10.1007/s11425-014-4955-3},
}

\end{biblist}

\end{bibdiv}

\end{document}